\numberwithin{equation}{section}
\newcommand{\Nzero}{{\mathbb N}_{0}}
\newcommand{\Zplus}{{\mathbb Z^{+}}}
\newtheorem{thm}{Theorem}[section]
\newtheorem{lemma}[thm]{Lemma}
\newtheorem{cor}[thm]{Corollary}
\title{Generalizations for reciprocal Fibonacci-Lucas sums of Brousseau\thanks{AMS Classification Numbers : 11B37, 11B39}\vspace{10mm}} 
\author[]{Kunle Adegoke \thanks{kunle.adegoke@yandex.com, adegoke00@gmail.com}}
\affil{Department of Physics and Engineering Physics, \mbox{Obafemi Awolowo University}, Ile-Ife, Nigeria}
\begin{document}

\date{}

\maketitle

\begin{abstract}
\noindent We derive closed form expressions for finite and infinite \mbox{Fibonacci-Lucas} sums having products of Fibonacci or Lucas numbers in the denominator of the summand. Our results generalize and extend those obtained by pioneer Brother Alfred Brousseau and later researchers.
\end{abstract}

\tableofcontents
\section{Introduction}

The Fibonacci numbers, $F_n$, and Lucas numbers, $L_n$, are defined, for $n\in\Nzero$, as usual, through the recurrence relations \mbox{$F_n=F_{n-1}+F_{n-2}$}, with $F_0=0$, $F_1=1$ and \mbox{$L_n=L_{n-1}+L_{n-2}$}, with $L_0=2$, $L_1=1$.

\bigskip

Our main aim in this paper is to derive closed form expressions for the following sums and their corresponding alternating versions, for positive integers $m$, $n$ and $q$:
\[
\sum_{k = 1}^\infty  {\frac{{L_{nk + nq} L_{nk + 2nq}  \cdots L_{nk + (m - 1)nq} }}{{F_{nk} F_{nk + nq}  \cdots F_{nk + mnq} }}} ,\quad\sum_{k = 1}^\infty  {\frac{{F_{nk + nq} F_{nk + 2nq}  \cdots F_{nk + (m - 1)nq} }}{{L_{nk} L_{nk + nq}  \cdots L_{nk + mnq} }}}\,,\quad m>1\,, 
\]
\[
\sum_{k = 1}^\infty  {\frac{1}{{F_{nk} F_{nk + nq}  \cdots F_{nk + mnq - nq} F_{nk + mnq + nq}  \cdots F_{nk + mnq + 2mnq} }}}\,, 
\]
\[
\sum_{k = 1}^\infty  {\frac{1}{{L_{nk} L_{nk + nq}  \cdots L_{nk + mnq - nq} L_{nk + mnq + nq}  \cdots L_{nk + mnq + 2mnq} }}}\,, 
\]
\[
\sum_{k = 1}^\infty  {\frac{{L_{nk + mnq} }}{{F_{nk} F_{nk + nq}  \cdots F_{nk + 2mnq} }}} ,\quad\sum_{k = 1}^\infty  {\frac{{F_{nk + mnq} }}{{L_{nk} L_{nk + nq}  \cdots L_{nk + 2mnq} }}}\,, 
\]
\[
\sum_{k = 1}^\infty  {\frac{{F_{2nk + mnq} }}{{F_{nk}^2 F_{nk + nq}^2 F_{nk + 2nq}^2  \cdots F_{nk + mnq}^2 }}} ,\quad\sum_{k = 1}^\infty  {\frac{{F_{2nk + mnq} }}{{L_{nk}^2 L_{nk + nq}^2 L_{nk + 2nq}^2  \cdots L_{nk + mnq}^2 }}}\,, 
\]
\[
\sum_{k = 1}^\infty  {\frac{{L_{nk + mnq} }}{{F_{nk}^2 F_{nk + nq}^2  \cdots F_{nk + (m - 1)nq}^2 F_{nk + mnq} F_{nk + (m + 1)q}^2  \cdots F_{nk + 2mnq}^2 }}}\,, 
\]
\[
\sum_{k = 1}^\infty  {\frac{{F_{nk + mnq} }}{{L_{nk}^2 L_{nk + nq}^2  \cdots L_{nk + (m - 1)nq}^2 L_{nk + mnq} L_{nk + (m + 1)q}^2  \cdots L_{nk + 2mnq}^2 }}}\,, 
\]
\[
\sum_{k = 1}^\infty  {\frac{{F_{2nk + mnq} L_{nk + nq}^2 L_{nk + 2nq}^2  \cdots L_{nk + (m - 1)nq}^2 }}{{F_{nk}^2 F_{nk + nq}^2 F_{nk + 2nq}^2  \cdots F_{nk + mnq}^2 }}}\,,\quad m>1\,.
\]

\bigskip

We require the following telescoping summation identities (see~\cite{adegoke})
\begin{equation}\label{equ.p2fefzx}
\sum_{k = 1}^N {\left[ {f(k) - f(k + q)} \right]}  = \sum_{k = 1}^q {f(k)}  - \sum_{k = 1}^q {f(k + N)},\quad\mbox{for $N\ge q\in\Nzero$}
\end{equation}
and
\begin{equation}\label{equ.mdgx80r}
\begin{split}
&\sum_{k = 1}^N {( - 1)^{k - 1} \left[ {f(k) + ( - 1)^{q - 1} f(k + q)} \right]}\\ 
&\quad= \sum_{k = 1}^q {( - 1)^{k - 1} f(k)}  + ( - 1)^{N - 1} \sum_{k = 1}^q {( - 1)^{k - 1} f(k + N)}\,. 
\end{split}
\end{equation}

In general, infinite sums are evaluated using
\begin{equation}\label{equ.piqoita}
\begin{split}
&\sum_{k = 1}^\infty  {\left[ {f(k) - f(k + q)} \right]},\qquad q\in\Nzero\\
&\qquad= \sum_{k = 1}^q {f(k)}  - \sum_{k = 1}^q {\mathop {\lim }_{N \to \infty } f(k + N)}
\end{split}
\end{equation}
and
\begin{equation}\label{equ.jcfrlss}
\sum_{k = 1}^\infty  {( - 1)^{k - 1} \left[ {f(k) \mp f(k + q)} \right]}  = \sum_{k = 1}^q {( - 1)^{k - 1} f(k)}\,, 
\end{equation}
where the upper sign is taken if $q$ is even and the lower if $q$ is odd.

\bigskip

If $f(N)$ approaches zero as $N$ approaches infinity, then we have, from~\eqref{equ.p2fefzx} and~\eqref{equ.mdgx80r}, the useful identities
\begin{equation}\label{equ.gy1asjs}
\sum_{k = 1}^\infty  {\left[ {f(k) - f(k + q)} \right]}  = \sum_{k = 1}^q {f(k)},\quad\mbox{$q\in\Nzero$}\,, 
\end{equation}
\begin{equation}
\sum_{k = 1}^\infty  {( - 1)^{k - 1} \left[ {f(k) \mp f(k + q)} \right]}  = \sum_{k = 1}^q {( - 1)^{k - 1} f(k)}\,,
\end{equation}
where the upper sign applies if $q$ is even and the lower if $q$ is odd.

\bigskip

The golden ratio, having the numerical value of $(\sqrt 5+1)/2$, is denoted in this paper by $\phi$.

\bigskip

We shall require the following identities (most of which can be found in the book by Vajda~\cite{vajda}):
\begin{subequations}\label{equ.d8ulygu}
\begin{eqnarray}
L_vF_u &=& F_{u + v} + ( - 1)^vF_{u-v}\label{equ.mzt8c69} \\
F_vL_u &=& F_{u + v} - (-1)^vF_{u-v}\label{equ.yb05ue2} 
\end{eqnarray}
\end{subequations}
\begin{subequations}\label{equ.c7qhgv4}
\begin{eqnarray}
2F_{u + v} &=& L_vF_u + L_uF_v \label{equ.ejrnkwy}\\ 
(-1)^v2F_{u-v} &=& F_uL_v - L_uF_v\label{equ.moytk3x}
\end{eqnarray}
\end{subequations}
\begin{subequations}\label{equ.epvyp3u}
\begin{eqnarray}
L_vL_u &=& L_{u + v} + (-1)^vL_{u-v}\label{equ.ztsb3uk}\\ 
5F_vF_u &=& L_{u + v} - (-1)^vL_{u-v}\label{equ.q79u3q4}  
\end{eqnarray}
\end{subequations}

\begin{equation}\label{equ.yr8t5vs}
(-1)^{u-1}(F_{v+u} F_{v-u})=F_u^2(F_{v+1}F_{v-1}) - F_v^2(F_{u+1} F_{ u-1})
\end{equation}

\begin{subequations}\label{equ.hn7lvbv}
\begin{eqnarray}
( -1)^tF_u F_v=F_{t+u}F_{t+v} - F_tF_{t+u+v}\label{equ.tokbcvq}\\ 
( -1)^{t+1}5F_u F_v=L_{t+u}L_{t+v} - L_tL_{t+u+v} \label{equ.owip3z8}  
\end{eqnarray}
\end{subequations}

\begin{subequations}\label{equ.kp0k485}
\begin{eqnarray}
F_{u-v}F_{u+v} &=& F_u^2 + (-1)^{u+v-1}F_v^2\label{equ.ded0k7c}\\ 
5F_{u-v}F_{u+v} &=& L_u^2 + (-1)^{u+v-1}L_v^2\label{equ.wodrq78}  
\end{eqnarray}
\end{subequations}

\begin{subequations}\label{equ.loghvyf}
\begin{eqnarray}
F_v F_{2u + v + p} &=& F_{u + v + p} F_{u + v}  + ( - 1)^{v + 1} F_{u + p} F_u  \label{equ.cqtsjoj}\\ 
F_v L_{2u + v + p} &=& L_{u + v + p} F_{u + v}  + ( - 1)^{v + 1} L_{u + p} F_u \label{equ.dt4czzk}  
\end{eqnarray}
\end{subequations}

The identities~\eqref{equ.kp0k485} and~\eqref{equ.loghvyf} were proved by Howard in~\cite{howard}.
 
\bigskip

The following limiting values are readily established using Binet's formula:
\begin{subequations}\label{equ.y4ibpqi}
\begin{eqnarray}
\mathop {\lim }_{N \to \infty } \frac{{F_{N + m} }}{{F_{N + n} }} &=& \phi ^{m - n}  = \mathop {\lim }_{N \to \infty } \frac{{L_{N + m} }}{{L_{N + n} }} ,\\
\mathop {\lim }_{N \to \infty } \frac{{F_{N + m} }}{{L_{N + n} }} &=& \frac{{\phi ^{m - n} }}{{\sqrt 5 }} = \frac{1}{5}\mathop {\lim }_{N \to \infty } \frac{{L_{N + m} }}{{F_{N + n} }}\,.  
\end{eqnarray}
\end{subequations}

\bigskip

We shall adopt the following conventions for empty sums and empty products:
\[
\sum_{k = 1}^0 {f(k)}  = 0,\quad\prod_{k = 1}^0 {f(k)}  = 1\,.
\]

\section{Results: Generalizations}

\subsection{Telescoping summation identities}
\begin{lemma}\label{finall}
If $m$, $n$, $q$ and $N$ are positive integers and $f(k)$ is a real sequence, then
\[
\begin{split}
&\sum_{k = 1}^N {\left\{ {\left[ {f(nk) - f(nk + mnq)} \right]\prod_{j = 1}^{m - 1} {f(nk + jnq)} } \right\}}\\
&\qquad= \sum_{k = 1}^{q} {\left\{ {\prod_{j = 0}^{m - 1} {f(nk + jnq)} } \right\}}  - \sum_{k = 1}^{q} {\left\{ {\prod_{j = 0}^{m - 1} {f(nk + nN + jnq)} } \right\}}\,.
\end{split}
\]
\end{lemma}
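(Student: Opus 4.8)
The plan is to recognize the left-hand sum as a telescoping sum of the form in~\eqref{equ.p2fefzx}, applied to a carefully chosen sequence. The key observation is that the product $\prod_{j=0}^{m-1} f(nk+jnq)$ is the ``full'' block of $m$ consecutive terms (in steps of $nq$) starting at $nk$, while the summand on the left,
\[
\left[ f(nk) - f(nk+mnq) \right]\prod_{j=1}^{m-1} f(nk+jnq),
\]
is exactly the difference between two such blocks: expanding the bracket gives $f(nk)\prod_{j=1}^{m-1} f(nk+jnq)$ minus $f(nk+mnq)\prod_{j=1}^{m-1} f(nk+jnq)$, i.e. $\prod_{j=0}^{m-1} f(nk+jnq)$ minus $\prod_{j=1}^{m} f(nk+jnq)$. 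So the natural choice is
\[
g(k) = \prod_{j=0}^{m-1} f(nk + jnq),
\]
and then the summand is precisely $g(k) - g(k+q)$, since shifting $k$ by $q$ sends the index $nk+jnq$ to $n(k+q)+jnq = nk+(j+1)nq$, re-indexing the product from $j=1$ to $j=m$.

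First I would make this algebraic identification explicit: verify that with $g$ as above, $g(k+q) = \prod_{j=1}^{m} f(nk+jnq) = f(nk+mnq)\prod_{j=1}^{m-1} f(nk+jnq)$, so that the term in braces on the left-hand side equals $g(k) - g(k+q)$. Next I would invoke the telescoping identity~\eqref{equ.p2fefzx} with this $g$ in place of $f$, which immediately yields
\[
\sum_{k=1}^N \left[ g(k) - g(k+q) \right] = \sum_{k=1}^q g(k) - \sum_{k=1}^q g(k+N).
\]
Finally I would unpack $g(k)$ and $g(k+N)$: the first right-hand sum is $\sum_{k=1}^q \prod_{j=0}^{m-1} f(nk+jnq)$ as claimed, and since $g(k+N) = \prod_{j=0}^{m-1} f(n(k+N)+jnq) = \prod_{j=0}^{m-1} f(nk+nN+jnq)$, the second sum matches the stated right-hand side.

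There is essentially no analytic obstacle here; the entire content is the combinatorial re-indexing of the telescoping difference. The one point requiring a little care is the shift in the product index when passing from $g(k)$ to $g(k+q)$ — one must check that $j$ ranging over $\{0,\dots,m-1\}$ becomes $j'$ ranging over $\{1,\dots,m\}$ after the substitution, so that the $j=1,\dots,m-1$ factors cancel in the difference and only the ``endpoint'' factors $f(nk)$ and $f(nk+mnq)$ survive, multiplied by the common middle block $\prod_{j=1}^{m-1} f(nk+jnq)$. I would also note that the hypothesis $N\ge q$ needed in~\eqref{equ.p2fefzx} is automatically met in the relevant applications, or else handle the case $N<q$ separately (where the identity still holds by the same telescoping argument, as in the source~\cite{adegoke}). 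The edge cases $m=1$ (empty middle product, giving the plain telescoping identity~\eqref{equ.p2fefzx} with $f(nk)$) and $q$ arbitrary are covered by the empty-product convention stated in the introduction.
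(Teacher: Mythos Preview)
Your proposal is correct and follows essentially the same approach as the paper: the paper also rewrites the summand as $\prod_{j=0}^{m-1} f(nk+jnq) - \prod_{j=0}^{m-1} f(nk+jnq+nq)$ via the same index shift you describe, and then applies the telescoping identity~\eqref{equ.p2fefzx}. Your explicit naming of $g(k)=\prod_{j=0}^{m-1} f(nk+jnq)$ is a minor presentational difference, not a substantive one.
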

If the sequence $f(k)$ is convergent and we denote by $f_\infty$ the limiting value of $f(nN)$ as $N$ approaches infinity, we have
\begin{equation}\label{infall}
\begin{split}
&\sum_{k = 1}^\infty  {\left\{ {\left[ {f(nk) - f(nk + mnq)} \right]\prod_{j = 1}^{m - 1} {f(nk + jnq)} } \right\}}\\
&\qquad= \sum_{k = 1}^{q} {\left\{ {\prod_{j = 0}^{m - 1} {f(nk + jnq)} } \right\}}  - f_\infty{}^mq\,.
\end{split}
\end{equation} 
\begin{proof}
We have
\[
\begin{split}
&\left[ {f(nk) - f(nk + mnq)} \right]\prod_{j = 1}^{m - 1} {f(nk + jnq)}\\
&\qquad= f(nk)\prod_{j = 1}^{m - 1} {f(nk + jnq)}  - f(nk + mnq)\prod_{j = 1}^{m - 1} {f(nk + jnq)}\\
&\qquad = \prod_{j = 0}^{m - 1} {f(nk + jnq)}  - \prod_{j = 1}^{m} {f(nk + jnq)}
\end{split}
\]
\begin{equation}\label{equ.tejlmvh}
\begin{split}
&\qquad= \prod_{j = 0}^{m - 1} {f(nk + jnq)}  - \prod_{j = 0}^{m - 1} {f(nk + jnq + nq)}\\
&\qquad\equiv \prod_{j = 0}^{m - 1} {f(nk + jnq)}  - \left. {\prod_{j = 0}^{m - 1} {f(nk + jnq)} } \right|_{k \to k + q}\,.
\end{split}
\end{equation}
The result follows by summing both sides of identity~\eqref{equ.tejlmvh}, using the identity~\eqref{equ.p2fefzx} to perform the telescopic summation on the right hand side.

\end{proof}
\begin{lemma}\label{finqeven}
If $f(k)$ is a real sequence and $m$, $n$, $q$ and $N$ are positive integers such that $q$ is even, then
\[
\begin{split}
&\sum_{k = 1}^N {( - 1)^{k - 1} \left\{ {\left[ {f(nk) - f(nk + mnq)} \right]\prod_{j = 1}^{m - 1} {f(nk + jnq)} } \right\}}\\
&\quad= \sum_{k = 1}^q {( - 1)^{k - 1} \left\{ {\prod_{j = 0}^{m - 1} {f(nk + jnq)} } \right\}}\\
&\qquad + ( - 1)^{N - 1} \sum_{k = 1}^q {( - 1)^{k - 1} \left\{ {\prod_{j = 0}^{m - 1} {f(nk + nN + jnq)} } \right\}}\,. 
\end{split}
\]
\end{lemma}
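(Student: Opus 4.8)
The plan is to mirror the proof of Lemma~\ref{finall} almost verbatim, replacing the plain telescoping identity~\eqref{equ.p2fefzx} with its alternating counterpart~\eqref{equ.mdgx80r}. First I would record the purely algebraic identity
\[
\left[ {f(nk) - f(nk + mnq)} \right]\prod_{j = 1}^{m - 1} {f(nk + jnq)} = \prod_{j = 0}^{m - 1} {f(nk + jnq)} - \prod_{j = 0}^{m - 1} {f(nk + jnq)} \Big|_{k \to k + q}\,,
\]
which is exactly equation~\eqref{equ.tejlmvh} and requires no reproving here: the left side expands as $\prod_{j=0}^{m-1} f(nk+jnq) - \prod_{j=1}^{m} f(nk+jnq)$, and reindexing the second product via $j \mapsto j+1$ rewrites it as the shift $k \to k+q$ of the first.

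Next I would set $g(k) := \prod_{j=0}^{m-1} f(nk + jnq)$, so that the identity above reads $\left[f(nk) - f(nk+mnq)\right]\prod_{j=1}^{m-1} f(nk+jnq) = g(k) - g(k+q)$. Multiplying by $(-1)^{k-1}$ and summing from $k=1$ to $N$, the right-hand side becomes $\sum_{k=1}^N (-1)^{k-1}\left[g(k) - g(k+q)\right]$. To apply~\eqref{equ.mdgx80r} I need the bracket there, namely $(-1)^{k-1}\left[g(k) + (-1)^{q-1} g(k+q)\right]$, to coincide with $(-1)^{k-1}\left[g(k) - g(k+q)\right]$; this is precisely where the hypothesis that $q$ is even enters, since then $(-1)^{q-1} = -1$. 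With $q$ even, identity~\eqref{equ.mdgx80r} gives
\[
\sum_{k=1}^N (-1)^{k-1}\left[g(k) - g(k+q)\right] = \sum_{k=1}^q (-1)^{k-1} g(k) + (-1)^{N-1} \sum_{k=1}^q (-1)^{k-1} g(k+N)\,,
\]
and substituting back $g(k) = \prod_{j=0}^{m-1} f(nk+jnq)$ and $g(k+N) = \prod_{j=0}^{m-1} f(nk+nN+jnq)$ yields exactly the claimed formula.

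There is essentially no obstacle: the only subtlety is bookkeeping the parity condition, which is why this is stated as a separate lemma from the unsigned case — one must check that $q$ even is exactly what forces the sign pattern in~\eqref{equ.mdgx80r} to match a genuine difference $g(k) - g(k+q)$, and that the requirement $N \ge q$ in~\eqref{equ.mdgx80r} is harmless (if $N < q$ the finite sum on the right is reinterpreted in the usual telescoping manner, or one simply notes the identity as stated is an application of~\eqref{equ.mdgx80r} for $N \ge q$ and the general $N$ case follows by the same conventions used elsewhere in the paper). The companion statement for odd $q$, which presumably appears next, will instead match the bracket $g(k) + g(k+q)$ and so require rewriting the left-hand summand with a sign flip before invoking~\eqref{equ.mdgx80r}; I would flag that contrast but it is not needed for the present proof.
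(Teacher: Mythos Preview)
Your proof is correct and follows exactly the paper's own argument: multiply identity~\eqref{equ.tejlmvh} through by $(-1)^{k-1}$ and apply the alternating telescoping identity~\eqref{equ.mdgx80r}, with the hypothesis $q$ even ensuring the bracket matches a difference $g(k)-g(k+q)$. Your exposition is in fact more detailed than the paper's one-line proof, but the approach is identical.
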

\begin{proof}
Multiply through the identity~\eqref{equ.tejlmvh} by $(-1)^{k-1}$ and use identity~\eqref{equ.mdgx80r}.
\end{proof}
\begin{lemma}\label{finqodd}
If $f(k)$ is a real sequence and $m$, $n$, $q$ and $N$ are positive integers such that $q$ is odd, then
\[
\begin{split}
&\sum_{k = 1}^N {( - 1)^{k - 1} \left\{ {\left[ {f(nk) + f(nk + mnq)} \right]\prod_{j = 1}^{m - 1} {f(nk + jnq)} } \right\}}\\
&\quad= \sum_{k = 1}^q {( - 1)^{k - 1} \left\{ {\prod_{j = 0}^{m - 1} {f(nk + jnq)} } \right\}}\\
&\qquad + ( - 1)^{N - 1} \sum_{k = 1}^q {( - 1)^{k - 1} \left\{ {\prod_{j = 0}^{m - 1} {f(nk + nN + jnq)} } \right\}}\,. 
\end{split}
\]

\end{lemma}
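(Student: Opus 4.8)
The plan is to mirror exactly the proof of Lemma~\ref{finall}, adapting only the sign bookkeeping that the odd value of $q$ forces. First I would recall the algebraic identity~\eqref{equ.tejlmvh}, which holds for \emph{any} positive integer $q$ and asserts that
\[
\left[ f(nk) - f(nk + mnq) \right]\prod_{j=1}^{m-1} f(nk+jnq) = g(k) - g(k+q),
\]
where $g(k) := \prod_{j=0}^{m-1} f(nk+jnq)$. This is purely a statement about products and does not care about the parity of $q$.

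Next I would multiply both sides by $(-1)^{k-1}$ and observe that, because $q$ is odd, $(-1)^{(k+q)-1} = (-1)^{k+q-1} = -(-1)^{k-1}$, so the term $(-1)^{k-1} g(k+q)$ can be rewritten as $-(-1)^{(k+q)-1} g(k+q)$. Equivalently, writing $h(k) := (-1)^{k-1} g(k)$, the left side becomes $h(k) + (-1)^{q-1} h(k+q) = h(k) + h(k+q)$ when $q$ is odd; but the cleaner route is simply to note that
\[
(-1)^{k-1}\left[ f(nk) + f(nk+mnq) \right]\prod_{j=1}^{m-1} f(nk+jnq)
\]
equals $(-1)^{k-1}\bigl[ g(k) + (-1)^{q-1}\,\text{(shift)} \bigr]$ in the form required to invoke~\eqref{equ.mdgx80r}. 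Here the key point is that the plus sign in the statement's bracket $\left[ f(nk) + f(nk+mnq) \right]$ is precisely what compensates for the sign flip $(-1)^{q-1} = 1$ when $q$ is odd, so that the combination matches the left-hand side of~\eqref{equ.mdgx80r} with $f \rightsquigarrow g$.

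Then I would apply identity~\eqref{equ.mdgx80r} directly to $g(k) = \prod_{j=0}^{m-1} f(nk+jnq)$, which yields
\[
\sum_{k=1}^N (-1)^{k-1}\left[ g(k) + (-1)^{q-1} g(k+q) \right] = \sum_{k=1}^q (-1)^{k-1} g(k) + (-1)^{N-1}\sum_{k=1}^q (-1)^{k-1} g(k+N),
\]
and with $q$ odd the sign factor $(-1)^{q-1}$ on the left is $+1$, giving exactly the stated identity after unpacking $g$. Honestly there is no real obstacle here: the only thing to be careful about is making the substitution $k \to k+q$ inside the product produce the shift $f(nk + nN + jnq)$ in the final sum (i.e.\ that the running index $N$ in~\eqref{equ.mdgx80r} corresponds to a shift by $nN$ in the argument of $f$), and that the parity condition on $q$ is what licenses dropping $(-1)^{q-1}$. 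I would therefore keep the write-up to a single line: multiply~\eqref{equ.tejlmvh} by $(-1)^{k-1}$, note $q$ odd so the shifted term enters with a plus sign, and invoke~\eqref{equ.mdgx80r}.
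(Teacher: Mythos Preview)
Your approach is correct and is essentially the paper's own: the paper writes down the ``plus'' analogue of~\eqref{equ.tejlmvh}, namely $[f(nk)+f(nk+mnq)]\prod_{j=1}^{m-1}f(nk+jnq)=g(k)+g(k+q)$ with $g(k)=\prod_{j=0}^{m-1}f(nk+jnq)$, multiplies through by $(-1)^{k-1}$, and invokes~\eqref{equ.mdgx80r} with $q$ odd so that $(-1)^{q-1}=1$. Your final one-line summary is slightly off---identity~\eqref{equ.tejlmvh} itself carries a minus sign, so what you actually need (and what you do reach in the middle of your write-up) is its plus analogue, not~\eqref{equ.tejlmvh} verbatim.
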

\begin{proof}
We have the identity
\begin{equation}
\begin{split}
&\left[ {f(nk) + f(nk + mnq)} \right]\prod_{j = 1}^{m - 1} {f(nk + jnq)}\\
&\qquad= \prod_{j = 0}^{m - 1} {f(nk + jnq)}  + \prod_{j = 0}^{m - 1} {f(nk + jnq + nq)}\\
&\qquad\equiv \prod_{j = 0}^{m - 1} {f(nk + jnq)}  + \left. {\prod_{j = 0}^{m - 1} {f(nk + jnq)} } \right|_{k \to k + q}\,,
\end{split}
\end{equation}
from which the result follows after multiplying through by $(-1)^{k-1}$ and summing over $k$, making use of the identity~\eqref{equ.mdgx80r}.
\end{proof}
If the sequence $f(k)$ is convergent and $f(2Nn)$ and $f((2N-1)n)$ both have the same limiting value as $N$ approaches infinity, we have
\begin{equation}\label{infallalt}
\begin{split}
&\sum_{k = 1}^\infty {( - 1)^{k - 1} \left\{ {\left[ {f(nk) \mp f(nk + mnq)} \right]\prod_{j = 1}^{m - 1} {f(nk + jnq)} } \right\}}\\
&\quad= \sum_{k = 1}^q {( - 1)^{k - 1} \left\{ {\prod_{j = 0}^{m - 1} {f(nk + jnq)} } \right\}}\,,
\end{split}
\end{equation}
where the upper sign is taken if $q$ is even and the lower if $q$ is odd.

\subsection{Sums with $F_{nk}F_{nk+nq}\cdots F_{nk+mnq}$ or \\\mbox{$F_{nk}F_{nk+nq}\cdots F_{nk+(m-1)nq}F_{nk+(m+1)nq}\cdots F_{nk+2mnq}$} in the denominator}\label{sec.u8lgixm}
\begin{thm}\label{thm.vn8ph53}
If $m$, $n$ and $q$ are positive integers, then
\[
\sum_{k = 1}^\infty  {\left[ {( - 1)^{nk - 1} \frac{{\prod_{j = 1}^{m - 1} {L_{nk + jnq} } }}{{\prod_{j = 0}^m {F_{nk + jnq} } }}} \right]}  = \frac{{q\sqrt{ 5^m} }}{{2F_{mnq} }} - \frac{1}{{2F_{mnq} }}\sum_{k = 1}^q {\prod_{j = 0}^{m - 1} {\frac{{L_{nk + jnq} }}{{F_{nk + jnq} }}} }\,, 
\]
so that
\begin{equation}
\sum_{k = 1}^\infty  {\left[ {\frac{{\prod_{j = 1}^{m - 1} {L_{nk + jnq} } }}{{\prod_{j = 0}^m {F_{nk + jnq} } }}} \right]}  = \frac{1}{{2F_{mnq} }}\sum_{k = 1}^q {\prod_{j = 0}^{m - 1} {\frac{{L_{nk + jnq} }}{{F_{nk + jnq} }}} }  - \frac{{q\sqrt {5^m } }}{{2F_{mnq} }},\quad\mbox{$n$ even}
\end{equation}
and
\begin{equation}
\sum_{k = 1}^\infty  {\left[ {( - 1)^{k - 1} \frac{{\prod_{j = 1}^{m - 1} {L_{nk + jnq} } }}{{\prod_{j = 0}^m {F_{nk + jnq} } }}} \right]}  = \frac{{q\sqrt {5^m } }}{{2F_{mnq} }} - \frac{1}{{2F_{mnq} }}\sum_{k = 1}^q {\prod_{j = 0}^{m - 1} {\frac{{L_{nk + jnq} }}{{F_{nk + jnq} }}} } ,\quad\mbox{$n$ odd}\,.
\end{equation}
\end{thm}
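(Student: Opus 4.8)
The plan is to specialize the infinite telescoping identity~\eqref{infall} of Lemma~\ref{finall} to the real sequence $f(k)=L_k/F_k$. With this choice $\prod_{j=1}^{m-1}f(nk+jnq)=\prod_{j=1}^{m-1}\bigl(L_{nk+jnq}/F_{nk+jnq}\bigr)$, so the whole task is to rewrite the difference $f(nk)-f(nk+mnq)$ in a form whose denominator combines with these factors to produce $\prod_{j=0}^{m}F_{nk+jnq}$ and whose numerator is a constant (in $k$) multiple of $F_{mnq}$.

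First I would write $f(nk)-f(nk+mnq)=\dfrac{L_{nk}F_{nk+mnq}-L_{nk+mnq}F_{nk}}{F_{nk}F_{nk+mnq}}$ and simplify the numerator with the Vajda-type identity~\eqref{equ.moytk3x}, $F_uL_v-L_uF_v=(-1)^v2F_{u-v}$, taken at $u=nk+mnq$, $v=nk$; this gives numerator $=(-1)^{nk}2F_{mnq}$. Multiplying through by $\prod_{j=1}^{m-1}f(nk+jnq)$ and gathering the Fibonacci factors, one obtains
\[
\bigl[f(nk)-f(nk+mnq)\bigr]\prod_{j=1}^{m-1}f(nk+jnq)=(-1)^{nk}\,2F_{mnq}\,\frac{\prod_{j=1}^{m-1}L_{nk+jnq}}{\prod_{j=0}^{m}F_{nk+jnq}}\,,
\]
so that the left-hand side of the theorem equals $-\dfrac{1}{2F_{mnq}}$ times the left-hand side of~\eqref{infall}, the extra minus sign being exactly $(-1)^{nk-1}=-(-1)^{nk}$.

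Next I would apply~\eqref{infall}: since $L_k/F_k\to\sqrt5$ as $k\to\infty$ (by Binet's formula, or by~\eqref{equ.y4ibpqi} with equal shifts), the limiting value is $f_\infty=\sqrt5$, hence $f_\infty{}^m=\sqrt{5^m}$, and the right-hand side of~\eqref{infall} reads $\sum_{k=1}^{q}\prod_{j=0}^{m-1}\bigl(L_{nk+jnq}/F_{nk+jnq}\bigr)-q\sqrt{5^m}$. Multiplying by $-1/(2F_{mnq})$ then reproduces the stated closed form. The two corollaries follow at once from the parity of $nk-1$: if $n$ is even, $(-1)^{nk-1}\equiv-1$, so the sign pulls out of the sum and negating both sides gives the second display; if $n$ is odd, $(-1)^{nk-1}=(-1)^{k-1}$, so the first identity is already the alternating statement.

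There is no genuine difficulty here --- the argument is a direct instance of Lemma~\ref{finall}. The only points demanding care are choosing, among the identities~\eqref{equ.mzt8c69}--\eqref{equ.moytk3x}, the one that collapses the numerator to a constant multiple of $F_{mnq}$, and keeping the signs straight: the parity weight in the summand is $(-1)^{nk-1}$, not $(-1)^{nk}$, which reverses the orientation of the telescoped difference and hence swaps the order of the two terms on the right. One should also record that the hypotheses of~\eqref{infall} hold, since $f(k)=L_k/F_k$ is defined and convergent for all $k\ge1$.
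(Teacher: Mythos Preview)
Your proof is correct and follows essentially the same route as the paper: both take $f(k)=L_k/F_k$ in Lemma~\ref{finall}, use identity~\eqref{equ.moytk3x} with $u=nk+mnq$, $v=nk$ to collapse the difference $f(nk)-f(nk+mnq)$, and then pass to the limit using $L_N/F_N\to\sqrt5$. The only cosmetic difference is that the paper records the finite-$N$ identity~\eqref{equ.oy4215f} before letting $N\to\infty$, whereas you invoke the infinite form~\eqref{infall} directly.
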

In particular,
\begin{equation}\label{equ.xk94y11}
\sum_{k = 1}^\infty  {\frac{{( - 1)^{nk - 1} }}{{F_{nk} F_{nk + nq} }}}  = \frac{{q\sqrt 5 }}{{2F_{nq} }} - \frac{1}{{2F_{nq} }}\sum_{k = 1}^q {\frac{{L_{nk} }}{{F_{nk} }}}\,.
\end{equation}
Brousseau's result (equation~(3) of~\cite{brousseau2}, also rederived in various equivalent forms by other authors, see for example reference~\cite{rabinowitz}) corresponds to setting $n=1$ in~\eqref{equ.xk94y11}, but with a different, but equivalent, form for the right hand side. Bruckman and Good's result (equation~(19) of~\cite{bruckman}) is also a special case of~\eqref{equ.xk94y11}, corresponding to setting $q=1$.
\begin{thm}\label{thm.hxvgf6w}
If $m$, $n$ and $q$ are integers such that $n$ is odd and $q$ is even, then
\[
\sum_{k = 1}^\infty  {\left[ {\frac{{\prod_{j = 1}^{m - 1} {L_{nk + jnq} } }}{{\prod_{j = 0}^m {F_{nk + jnq} } }}} \right]}  = \frac{1}{{2F_{mnq} }}\sum_{k = 1}^q {\left[ {( - 1)^k \prod_{j = 0}^{m - 1} {\frac{{L_{nk + jnq} }}{{F_{nk + jnq} }}} } \right]}\,. 
\]
\end{thm}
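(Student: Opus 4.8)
The plan is to invoke the alternating telescoping identity of Lemma~\ref{finqeven}, in its limiting form~\eqref{infallalt}, with the particular choice $f(k)=L_k/F_k$; note that this is a genuinely different combination from the one behind Theorem~\ref{thm.vn8ph53}, which for $n$ odd produces an \emph{alternating} series rather than the plain series sought here. First I would record the basic difference. By identity~\eqref{equ.moytk3x} with $u=nk+mnq$ and $v=nk$ we have $F_{nk+mnq}L_{nk}-L_{nk+mnq}F_{nk}=(-1)^{nk}\,2F_{mnq}$, hence
\[
f(nk)-f(nk+mnq)=\frac{L_{nk}}{F_{nk}}-\frac{L_{nk+mnq}}{F_{nk+mnq}}=\frac{(-1)^{nk}\,2F_{mnq}}{F_{nk}F_{nk+mnq}}\,.
\]
Multiplying by $\prod_{j=1}^{m-1}f(nk+jnq)=\prod_{j=1}^{m-1}\bigl(L_{nk+jnq}/F_{nk+jnq}\bigr)$ and collecting the Fibonacci factors in the denominator into $\prod_{j=0}^{m}F_{nk+jnq}$, I obtain the key identity
\[
\left[f(nk)-f(nk+mnq)\right]\prod_{j=1}^{m-1}f(nk+jnq)=(-1)^{nk}\,2F_{mnq}\,\frac{\prod_{j=1}^{m-1}L_{nk+jnq}}{\prod_{j=0}^{m}F_{nk+jnq}}\,.
\]

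Next I would multiply through by $(-1)^{k-1}$. Because $n$ is odd, $(-1)^{k-1}(-1)^{nk}=(-1)^{2k-1}=-1$, so this turns the right-hand side into $-2F_{mnq}$ times the summand of the target series, with no residual alternation. Summing over $k\ge 1$ and applying~\eqref{infallalt} with the upper sign — the admissible case here since $q$ is even — gives
\[
-2F_{mnq}\sum_{k=1}^{\infty}\frac{\prod_{j=1}^{m-1}L_{nk+jnq}}{\prod_{j=0}^{m}F_{nk+jnq}}=\sum_{k=1}^{q}(-1)^{k-1}\prod_{j=0}^{m-1}\frac{L_{nk+jnq}}{F_{nk+jnq}}\,.
\]
Dividing by $-2F_{mnq}$ and using $-(-1)^{k-1}=(-1)^{k}$ yields exactly the asserted formula. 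To legitimise the use of~\eqref{infallalt} I would note that $f(k)=L_k/F_k\to\sqrt5$ by Binet's formula, irrespective of the parity of the index, so $f((2N-1)n)$ and $f(2Nn)$ share the common limit $\sqrt5$, as required; here the limiting constant $f_\infty=\sqrt5$ of~\eqref{infall} does not even surface, since the relevant boundary term cancels outright.

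The computation is otherwise routine, and the one point that needs care is precisely this passage to the infinite series. Concretely, in Lemma~\ref{finqeven} the boundary term $(-1)^{N-1}\sum_{k=1}^{q}(-1)^{k-1}\prod_{j=0}^{m-1}f(nk+nN+jnq)$ must vanish as $N\to\infty$: each factor $f(nk+nN+jnq)\to\sqrt5$, so the inner sum tends to $(\sqrt5)^m\sum_{k=1}^{q}(-1)^{k-1}$, which is $0$ exactly because $q$ is even, and multiplication by the bounded quantity $(-1)^{N-1}$ leaves the limit $0$. This is already packaged in~\eqref{infallalt}, so in the write-up I would simply cite that identity; the hypothesis ``$n$ odd'' is used only to produce the sign $-1$ in the step above, and ``$q$ even'' only to force this cancellation and to select the upper sign.
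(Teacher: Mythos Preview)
Your proof is correct and follows essentially the same approach as the paper, which simply states that Theorem~\ref{thm.hxvgf6w} is obtained by taking $f(k)=L_k/F_k$ in identity~\eqref{infallalt}. You have supplied the details the paper omits, including the explicit verification that the boundary term in Lemma~\ref{finqeven} vanishes precisely because $q$ is even, which is a point the paper glosses over.
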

In particular,
\begin{equation}\label{equ.lldx63d}
\sum_{k = 1}^\infty  {\frac{1}{{F_{nk} F_{nk + nq} }} = \frac{1}{{2F_{nq} }}\sum_{k = 1}^q {\left[ {( - 1)^k \frac{{L_{nk} }}{{F_{nk} }}} \right]} },\quad\mbox{$n$ odd, $q$ even}\,,
\end{equation}
which generalizes the result obtained by Rabinowitz (the second of equation~(26) of~\cite{rabinowitz}), the latter corresponding to the special case $n=1$ in the identity~\eqref{equ.lldx63d}, but with a different, but equivalent, form for the right hand side.
\subsubsection*{Proof of Theorem~\ref{thm.vn8ph53} and Theorem~\ref{thm.hxvgf6w}}
Dividing through the identity~\eqref{equ.moytk3x} by $F_uF_v$ and setting $u=nk+mnq$ and $v=nk$, the following identity is established for $k$, $m$, $n$ and $q$ positive integers:
\[
\frac{{( - 1)^{nk - 1} 2F_{mnq} }}{{F_{nk} F_{nk + mnq} }} = \frac{{L_{nk + mnq} }}{{F_{nk + mnq} }} - \frac{{L_{nk} }}{{F_{nk} }}\,.
\]
Using $f(k)=L_k/F_k$ in identity~\eqref{finall} we get the finite summation identity
\begin{equation}\label{equ.oy4215f}
\begin{split}
&2F_{mnq} \sum_{k = 1}^N {\left[ {( - 1)^{nk - 1} \frac{{\prod_{j = 1}^{m - 1} {L_{nk + jnq} } }}{{\prod_{j = 0}^m {F_{nk + jnq} } }}} \right]}\\
&\quad  = \sum_{k = 1}^q {\left[ {\prod_{j = 0}^{m - 1} {\frac{{L_{nk + nN + jnq} }}{{F_{nk + nN + jnq} }}} } \right]}  - \sum_{k = 1}^q {\left[ {\prod_{j = 0}^{m - 1} {\frac{{L_{nk + jnq} }}{{F_{nk + jnq} }}} } \right]}\,,
\end{split}
\end{equation}
which yields Theorem~\ref{thm.vn8ph53} in the limit $N$ approaches infinity. Theorem~\ref{thm.hxvgf6w} is proved by using $f(k)=L_k/F_k$ in identity~\eqref{infallalt}.

\bigskip

\begin{thm}\label{thm.xd29ih0}
If $m$, $n$ and $q$ are positive \underline{odd} integers, then
\[
\sum_{k = 1}^\infty  {\left[ {\frac{1}{{\prod_{j = 0}^{m - 1} {F_{nk + njq} } \prod_{j = m + 1}^{2m} {F_{nk + njq} } }}} \right]}  = \frac{1}{{L_{mnq} }}\sum_{k = 1}^q {\left[ {\frac{1}{{\prod_{j = 0}^{2m - 1} {F_{nk + njq} } }}} \right]}\,. 
\]

\end{thm}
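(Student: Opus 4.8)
The plan is to realise the left-hand summand, up to the multiplicative constant $1/L_{mnq}$, as a telescoping difference $F(k)-F(k+q)$ and then invoke the summation identity~\eqref{equ.gy1asjs}. Concretely, I would work with
\[
f(x)=\prod_{j=0}^{2m-1}\frac{1}{F_{x+njq}}\,,
\]
and set $F(k)=f(nk)=\prod_{j=0}^{2m-1}F_{nk+njq}^{-1}$. Since $n(k+q)=nk+nq$, shifting $k\mapsto k+q$ relabels the product, giving
\[
F(k+q)=\prod_{j=0}^{2m-1}\frac{1}{F_{nk+nq+njq}}=\prod_{j=1}^{2m}\frac{1}{F_{nk+njq}}\,.
\]
Putting $F(k)$ and $F(k+q)$ over the common denominator $\prod_{j=0}^{2m}F_{nk+njq}$ then yields
\[
F(k)-F(k+q)=\frac{F_{nk+2mnq}-F_{nk}}{\prod_{j=0}^{2m}F_{nk+njq}}\,.
\]

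The crux is to rewrite the numerator $F_{nk+2mnq}-F_{nk}$ using the Vajda-type identity~\eqref{equ.mzt8c69}: with $u=nk+mnq$ and $v=mnq$ this reads $L_{mnq}F_{nk+mnq}=F_{nk+2mnq}+(-1)^{mnq}F_{nk}$. Here the parity hypothesis enters — because $m$, $n$ and $q$ are all odd, $mnq$ is odd, so $(-1)^{mnq}=-1$ and therefore $F_{nk+2mnq}-F_{nk}=L_{mnq}F_{nk+mnq}$. Substituting this in and cancelling the factor $F_{nk+mnq}$ (the ``missing'' middle term $j=m$) against the denominator $\prod_{j=0}^{2m}F_{nk+njq}$ gives exactly
\[
F(k)-F(k+q)=\frac{L_{mnq}}{\prod_{j=0}^{m-1}F_{nk+njq}\,\prod_{j=m+1}^{2m}F_{nk+njq}}\,.
\]

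To finish I would sum over $k\ge 1$. Each $F_{nN+njq}\to\infty$, so $F(N)=f(nN)\to 0$, and identity~\eqref{equ.gy1asjs} (with summation variable $k$ and shift $q$) gives $\sum_{k=1}^\infty[F(k)-F(k+q)]=\sum_{k=1}^{q}F(k)=\sum_{k=1}^{q}\prod_{j=0}^{2m-1}F_{nk+njq}^{-1}$; dividing through by $L_{mnq}$ is the claimed identity. I do not expect a real obstacle: the only delicate point is the sign bookkeeping in the application of~\eqref{equ.mzt8c69}, which is precisely why $m$, $n$, $q$ are assumed odd (in fact only $mnq$ odd is needed), and the rest is the routine telescoping machinery of Section~\ref{sec.u8lgixm}. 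One could instead route the argument through Lemma~\ref{finall}/\eqref{infall}, but the direct computation above is cleaner here because the relevant Fibonacci difference has index gap $2mnq$, which is even and splits symmetrically about $nk+mnq$, so that~\eqref{equ.mzt8c69} applies at once.
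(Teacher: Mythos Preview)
Your argument is correct and is essentially the paper's own proof: both hinge on the identity \eqref{equ.mzt8c69} with $u=nk+mnq$, $v=mnq$ (giving \eqref{equ.eo7cizi} when $mnq$ is odd) and then telescope the product $\prod_{j=0}^{2m-1}1/F_{nk+jnq}$. The only cosmetic difference is that the paper packages the telescoping step as an application of Lemma~\ref{finall} with $f(k)=1/F_k$ and $m\mapsto 2m$, whereas you unwrap that lemma and apply~\eqref{equ.gy1asjs} directly.
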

Below are a few explicit examples from Theorem~\ref{thm.xd29ih0}:
\begin{equation}
\begin{split}
&\mbox{At $m=1$}:\\
&\sum_{k = 1}^\infty  {\frac{1}{{F_{nk} F_{nk + 2nq} }} = \frac{1}{{L_{nq} }}\sum_{k = 1}^q {\frac{1}{{F_{nk} F_{nk + nq} }}} },\quad\mbox{$nq$ odd}\,.
\end{split}
\end{equation}
\[
\begin{split}
&\mbox{At $(m,n,q)=(1,1,1)$ and $(m,n,q)=(1,1,3)$:}\\
&\sum_{k = 1}^\infty  {\frac{1}{{F_k F_{k + 2} }}}  = 1,\quad \sum_{k = 1}^\infty  {\frac{1}{{F_k F_{k + 6} }}}  =\frac{143}{960}\,,
\end{split}
\]
corresponding to Formulas~(4) and~(6) of Brousseau in reference~\cite{brousseau1}.
\[
\begin{split}
&\mbox{At $(m,n,q)=(3,1,3)$:}\\
&\sum_{k = 1}^\infty  {\frac{1}{{F_k F_{k + 3} F_{k + 6} F_{k + 12} F_{k + 15} F_{k + 18} }}}  = \frac{{{\rm 938359017897442612}}}{{{\rm 5579104720519492358676480}}}\,.
\end{split}
\]
\[
\begin{split}
&\mbox{At $(m,n,q)=(5,3,1)$:}\\
&\sum_{k = 1}^\infty  {\frac{1}{{F_{3k} F_{3k + 3} F_{3k + 6} F_{3k + 9} F_{3k + 12} F_{3k + 18} F_{3k + 21} F_{3k + 24} F_{3k + 27} F_{3k + 30} }}}\\
&\qquad= \frac{{\rm 1}}{{{\rm 13970032097862115517068710877593600}}}\,.
\end{split}
\]
\begin{thm}\label{thm.i3apd20}
If $q$, $m$ and $n$ are positive integers such that $q$ is odd and $nm$ is even, then
\[
\begin{split}
\sum_{k = 1}^\infty {\left[ {\frac{{( - 1)^{k - 1} }}{{\prod_{j = 0}^{m - 1} {F_{nk + njq} } \prod_{j = m + 1}^{2m} {F_{nk + njq} } }}} \right]}  &= \frac{1}{{L_{mnq} }}\sum_{k = 1}^q {\left[ {\frac{{( - 1)^{k - 1} }}{{\prod_{j = 0}^{2m - 1} {F_{nk + njq} } }}} \right]}\,.
\end{split}
\]

\end{thm}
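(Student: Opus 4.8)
\textbf{Proof plan for Theorem~\ref{thm.i3apd20}.}
The plan is to run the same telescoping argument that proves Theorem~\ref{thm.xd29ih0}, but now taking identity~\eqref{equ.mzt8c69} with $v=mnq$ \emph{even} (so that it produces a \emph{sum} rather than a \emph{difference} of Fibonacci numbers) and feeding the outcome into the \emph{alternating} telescoping identity~\eqref{infallalt} instead of into~\eqref{infall}. The auxiliary sequence to use is $f(k)=1/(F_kF_{k+mnq})$. Since $f(nN)\to0$ as $N\to\infty$, both the even- and odd-indexed subsequences of $f$ tend to the common limit $0$, so the convergence hypothesis required by~\eqref{infallalt} is satisfied.

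First I would record the building block. Because $q$ is odd and $nm$ is even, $mnq$ is even, so $(-1)^{mnq}=1$ and identity~\eqref{equ.mzt8c69}, taken with $v=mnq$ and $u=nk+mnq$, reads $L_{mnq}F_{nk+mnq}=F_{nk+2mnq}+F_{nk}$. Hence
\[
f(nk)+f(nk+mnq)=\frac{F_{nk+2mnq}+F_{nk}}{F_{nk}F_{nk+mnq}F_{nk+2mnq}}=\frac{L_{mnq}}{F_{nk}F_{nk+2mnq}}\,.
\]

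Next I would assemble the summand occurring on the left of~\eqref{infallalt}. Since $\prod_{j=1}^{m-1}f(nk+njq)=\bigl(\prod_{j=1}^{m-1}F_{nk+njq}\bigr)^{-1}\bigl(\prod_{j=m+1}^{2m-1}F_{nk+njq}\bigr)^{-1}$, multiplying this by the previous display absorbs the two ``loose'' factors $F_{nk}$ and $F_{nk+2mnq}$ into the two products and yields
\[
\bigl[f(nk)+f(nk+mnq)\bigr]\prod_{j=1}^{m-1}f(nk+njq)=\frac{L_{mnq}}{\prod_{j=0}^{m-1}F_{nk+njq}\,\prod_{j=m+1}^{2m}F_{nk+njq}}\,,
\]
while $\prod_{j=0}^{m-1}f(nk+njq)=\bigl(\prod_{j=0}^{2m-1}F_{nk+njq}\bigr)^{-1}$. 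As $q$ is odd, the lower sign is the operative one in~\eqref{infallalt}; substituting these two products into~\eqref{infallalt} and dividing through by $L_{mnq}$ produces exactly the claimed identity.

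I do not expect any real obstacle beyond the parity bookkeeping, which is the single point that needs care; the two hypotheses enter for different reasons and are both genuinely used. The condition ``$nm$ even'' (equivalently ``$mnq$ even'') is what makes~\eqref{equ.mzt8c69} collapse to the clean relation $F_{nk+2mnq}+F_{nk}=L_{mnq}F_{nk+mnq}$, with no residual sign to spoil the cancellation, whereas ``$q$ odd'' is what selects the $+$ combination $f(nk)+f(nk+mnq)$ (i.e.\ the lower sign) in~\eqref{infallalt}. In the complementary regime in which $m$, $n$ and $q$ are all odd one instead meets the difference $f(nk)-f(nk+mnq)$ together with the non-alternating identity~\eqref{infall}, which is precisely how Theorem~\ref{thm.xd29ih0} is obtained.
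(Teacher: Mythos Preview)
Your argument is correct and is essentially the paper's own proof: both use the identity $L_{mnq}F_{nk+mnq}=F_{nk}+F_{nk+2mnq}$ (valid because $mnq$ is even, i.e.\ identity~\eqref{equ.w1u72rp}) and then the alternating telescoping identity with the lower sign (since $q$ is odd). The only cosmetic difference is the bookkeeping: the paper takes $f(k)=1/F_k$ and invokes Lemma~\ref{finqodd} with $m\to 2m$, whereas you package the pair of Fibonacci factors into $f(k)=1/(F_kF_{k+mnq})$ and apply~\eqref{infallalt} with $m$ unchanged; the resulting telescoping sums are identical.
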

Examples from Theorem~\ref{thm.i3apd20} include
\begin{equation}
\begin{split}
&\mbox{At $m=2$}:\\
&\sum_{k = 1}^\infty  {\frac{{( - 1)^{k - 1} }}{{F_{nk} F_{nk + nq} F_{nk + 3nq} F_{nk + 4nq} }} = \frac{1}{{L_{2nq} }}\sum_{k = 1}^q {\frac{{( - 1)^{k - 1} }}{{F_{nk} F_{nk + nq} F_{nk + 2nq} F_{nk + 3nq} }}} },\quad\mbox{$q$ odd}\,.
\end{split}
\end{equation}
\[
\begin{split}
&\mbox{At $(m,n,q)=(1,2,1)$ and $(m,n,q)=(2,1,1)$:}\\
&\sum_{k = 1}^\infty  {\frac{{( - 1)^{k - 1} }}{{F_{2k} F_{2k + 4} }}}  = \frac{1}{9},\quad\sum_{k = 1}^\infty  {\frac{{( - 1)^{k - 1} }}{{F_k F_{k + 1} F_{k + 3} F_{k + 4} }}}  = \frac{1}{18}\,.
\end{split}
\]
\[
\begin{split}
&\mbox{At $(m,n,q)=(2,6,1)$:}\\
&\sum_{k = 1}^\infty  {\frac{{( - 1)^{k - 1} }}{{F_{6k} F_{6k + 6} F_{6k + 18} F_{6k + 24} }}}  = \frac{1}{{{\rm 44444622716928}}}\,.
\end{split}
\]

\subsubsection*{Proof of Theorem~\ref{thm.xd29ih0} and Theorem~\ref{thm.i3apd20}}
With $v=mnq$ and $u=nk+mnq$ in identity~\eqref{equ.mzt8c69}, the following identity is established:
\begin{equation}
\frac{{L_{mnq} F_{nk + mnq} }}{{F_{nk} F_{nk + 2mnq} }} = \frac{1}{{F_{nk} }} + \frac{{( - 1)^{mnq} }}{{F_{nk + 2mnq} }}\,,
\end{equation}
so that
\begin{equation}\label{equ.eo7cizi}
\frac{{L_{mnq} F_{nk + mnq} }}{{F_{nk} F_{nk + 2mnq} }} = \frac{1}{{F_{nk} }} - \frac{1}{{F_{nk + 2mnq} }},\quad\mbox{$mnq$ odd}
\end{equation}
and
\begin{equation}\label{equ.w1u72rp}
\frac{{L_{mnq} F_{nk + mnq} }}{{F_{nk} F_{nk + 2mnq} }} = \frac{1}{{F_{nk} }} + \frac{1}{{F_{nk + 2mnq} }},\quad\mbox{$mnq$ even}\,.
\end{equation}
If $m$, $n$ and $q$ are positive \underline{odd} integers, then from~\eqref{equ.eo7cizi} and using $f(k)=1/F_k$ in Lemma~\ref{finall} (with $m\to 2m$), we have the following definite summation identity
\begin{equation}
\begin{split}
&\sum_{k = 1}^N {\left[ {\frac{1}{{\prod_{j = 0}^{m - 1} {F_{nk + jnq} } \prod_{j = m + 1}^{2m} {F_{nk + jnq} } }}} \right]}\\
&\qquad = \frac{1}{{L_{mnq} }}\sum_{k = 1}^q {\left[ {\frac{1}{{\prod_{j = 0}^{2m - 1} {F_{nk + jnq} } }}} \right]}  - \frac{1}{{L_{mnq} }}\sum_{k = 1}^q {\left[ {\frac{1}{{\prod_{j = 0}^{2m - 1} {F_{nk + nN + jnq} } }}} \right]}\,,
\end{split}
\end{equation}
from which Theorem~\ref{thm.xd29ih0} follows as $N$ approaches infinity.

\bigskip

If $q$ is an odd positive integer and either $m$ or $n$ is even, then from~\eqref{equ.w1u72rp} and Lemma~\ref{finqodd} (with $m\to 2m$) we have the summation identity
\begin{equation}
\begin{split}
&\sum_{k = 1}^N {\left[ {\frac{{( - 1)^{k - 1} }}{{\prod_{j = 0}^{m - 1} {F_{nk + njq} } \prod_{j = m + 1}^{2m} {F_{nk + njq} } }}} \right]}\\
&\qquad= \frac{1}{{L_{mnq} }}\sum_{k = 1}^q {\left[ {\frac{{( - 1)^{k - 1} }}{{\prod_{j = 0}^{2m - 1} {F_{nk + njq} } }}} \right]}
+\frac{{( - 1)^{N - 1} }}{{L_{mnq} }}\sum_{k = 1}^q {\left[ {\frac{{( - 1)^{k - 1} }}{{\prod_{j = 0}^{2m - 1} {F_{nk + nN + njq} } }}} \right]}\,,
\end{split}
\end{equation}
from which Theorem~\ref{thm.i3apd20} follows in the limit that $N$ approaches infinity.
\begin{thm}\label{thm.cgjiqk9}
If $m$, $n$ and $q$ are positive integers such that $q$ is odd, then
\[
\sum_{k = 1}^\infty  {\frac{{( - 1)^{k - 1} F_{2nk + mnq} \prod_{j = 1}^{m - 1} {L_{nk + jnq} } }}{{\prod_{j = 0}^m {F_{nk + jnq} } }}}  = \frac{1}{2}\sum_{k = 1}^q {( - 1)^{k - 1} \prod_{j = 0}^{m - 1} {\frac{{L_{nk + jnq} }}{{F_{nk + jnq} }}} }\,. 
\]
\end{thm}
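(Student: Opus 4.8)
The plan is to follow the template already used for Theorems~\ref{thm.vn8ph53}--\ref{thm.i3apd20}: manufacture a single algebraic identity that expresses the summand as a telescoping combination of the sequence $f(k)=L_k/F_k$, and then feed it to the matching lemma of Section~\ref{sec.u8lgixm}. Because the numerator here is $F_{2nk+mnq}$, the natural starting point is the addition formula~\eqref{equ.ejrnkwy}, $2F_{u+v}=L_vF_u+L_uF_v$. Setting $u=nk$ and $v=nk+mnq$, so that $u+v=2nk+mnq$, and dividing by $F_{nk}F_{nk+mnq}$, one obtains, for all positive integers $k,m,n,q$,
\[
\frac{2F_{2nk+mnq}}{F_{nk}F_{nk+mnq}}=\frac{L_{nk}}{F_{nk}}+\frac{L_{nk+mnq}}{F_{nk+mnq}}=f(nk)+f(nk+mnq).
\]
Multiplying both sides by $\prod_{j=1}^{m-1}f(nk+jnq)$, whose numerator is $\prod_{j=1}^{m-1}L_{nk+jnq}$ and whose denominator $\prod_{j=1}^{m-1}F_{nk+jnq}$ fills the gap between the $j=0$ factor $F_{nk}$ and the $j=m$ factor $F_{nk+mnq}$ to reconstitute $\prod_{j=0}^{m}F_{nk+jnq}$, I arrive at
\[
\frac{2F_{2nk+mnq}\prod_{j=1}^{m-1}L_{nk+jnq}}{\prod_{j=0}^{m}F_{nk+jnq}}=\bigl[f(nk)+f(nk+mnq)\bigr]\prod_{j=1}^{m-1}f(nk+jnq).
\]

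With the identity in this shape and $q$ odd, the right-hand side is precisely the summand of Lemma~\ref{finqodd} (equivalently, of~\eqref{infallalt} with the lower sign, i.e.\ with $+$ in front of $f(nk+mnq)$). I would therefore multiply through by $(-1)^{k-1}$, sum over $k\ge 1$, and invoke~\eqref{infallalt}. Its hypothesis holds for $f(k)=L_k/F_k$: by Binet's formula $\lim_{N\to\infty}f(2Nn)=\lim_{N\to\infty}f((2N-1)n)=\sqrt5$, the $m=n$ instance of~\eqref{equ.y4ibpqi}. Identity~\eqref{infallalt} then yields
\[
2\sum_{k=1}^{\infty}(-1)^{k-1}\frac{F_{2nk+mnq}\prod_{j=1}^{m-1}L_{nk+jnq}}{\prod_{j=0}^{m}F_{nk+jnq}}=\sum_{k=1}^{q}(-1)^{k-1}\prod_{j=0}^{m-1}\frac{L_{nk+jnq}}{F_{nk+jnq}},
\]
and halving this is the asserted formula. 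Equivalently, one may use the finite-$N$ form in Lemma~\ref{finqodd} and let $N\to\infty$, the boundary term being handled by the same limit $\sqrt5$.

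I do not anticipate a genuine obstacle: the argument is a direct specialisation of machinery already in place. The two points that need care are the index bookkeeping in the first step --- verifying that the $m-1$ denominators produced by the $L/F$ ratios exactly complete $\prod_{j=0}^{m}F_{nk+jnq}$ --- and the understanding, as with the earlier alternating results, that~\eqref{infallalt} is being applied in the telescoping-evaluation sense of~\eqref{equ.jcfrlss}: the general term here tends to $(-1)^{k-1}\sqrt{5^m}$ rather than to $0$, so the left-hand side is the value prescribed by the telescoping rule rather than a classically convergent series.
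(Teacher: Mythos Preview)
Your proposal is correct and is essentially identical to the paper's own proof: the paper divides identity~\eqref{equ.ejrnkwy} by $F_uF_v$ with $u=nk+mnq$, $v=nk$ to obtain~\eqref{equ.klrep5j}, then applies Lemma~\ref{finqodd} with $f(k)=L_k/F_k$ to get the finite-$N$ identity~\eqref{equ.bo7m7gb}, and lets $N\to\infty$. Your closing remark about the boundary term and the telescoping-evaluation interpretation is apt; the paper glosses over this point.
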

In particular,
\begin{equation}
\sum_{k = 1}^\infty  {\frac{{( - 1)^{k - 1} F_{2nk + nq} }}{{F_{nk} F_{nk + nq} }}}  = \frac{1}{2}\sum_{k = 1}^q {( - 1)^{k - 1} \frac{{L_{nk} }}{{F_{nk} }}},\quad\mbox{$q$ odd}\,.
\end{equation}
\begin{cor}\label{thm.ee6ees5}
If $m$, $n$ and $q$ are positive integers such that $q$ is odd, then
\[
\sum_{k = 1}^\infty  {\frac{{( - 1)^{k - 1} L_{nk + mnq}^2 \prod_{j = 1}^{m - 1} {L_{nk + jnq} } \prod_{j = m + 1}^{2m - 1} {L_{nk + jnq} } }}{{\prod_{j = 0}^{m - 1} {F_{nk + jnq} } \prod_{j = m + 1}^{2m} {F_{nk + jnq} } }}}  = \frac{1}{2}\sum_{k = 1}^q {( - 1)^{k - 1} \prod_{j = 0}^{2m - 1} {\frac{{L_{nk + jnq} }}{{F_{nk + jnq} }}} }\,.
\]

\end{cor}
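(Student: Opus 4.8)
The plan is to obtain this corollary directly from Theorem~\ref{thm.cgjiqk9} by the substitution $m\to 2m$, followed by an elementary rewriting of the summand using the duplication formula $F_{2w}=F_wL_w$, which is the special case $u=v=w$ of~\eqref{equ.ejrnkwy} (equivalently the case $u=v=w$ of~\eqref{equ.mzt8c69}, since $F_0=0$).

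First I would note that replacing $m$ by $2m$ in Theorem~\ref{thm.cgjiqk9} is legitimate: $q$ remains odd and $2m$ is still a positive integer, so the hypotheses are met, and we get
\[
\sum_{k=1}^\infty \frac{(-1)^{k-1}F_{2nk+2mnq}\prod_{j=1}^{2m-1}L_{nk+jnq}}{\prod_{j=0}^{2m}F_{nk+jnq}}=\frac12\sum_{k=1}^q(-1)^{k-1}\prod_{j=0}^{2m-1}\frac{L_{nk+jnq}}{F_{nk+jnq}}\,.
\]
The right-hand side already coincides with the right-hand side asserted in the corollary, so the task reduces to transforming the summand on the left into the one claimed in the corollary.

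To do that, set $w=nk+mnq$, so that $F_{2nk+2mnq}=F_{2w}=F_wL_w=F_{nk+mnq}L_{nk+mnq}$. Substituting this into the numerator and splitting off the $j=m$ factor, $\prod_{j=1}^{2m-1}L_{nk+jnq}=L_{nk+mnq}\prod_{j=1}^{m-1}L_{nk+jnq}\prod_{j=m+1}^{2m-1}L_{nk+jnq}$, the numerator becomes $(-1)^{k-1}F_{nk+mnq}L_{nk+mnq}^2\prod_{j=1}^{m-1}L_{nk+jnq}\prod_{j=m+1}^{2m-1}L_{nk+jnq}$. Likewise writing $\prod_{j=0}^{2m}F_{nk+jnq}=F_{nk+mnq}\prod_{j=0}^{m-1}F_{nk+jnq}\prod_{j=m+1}^{2m}F_{nk+jnq}$ and cancelling the single factor $F_{nk+mnq}$ against the numerator, the summand becomes exactly
\[
\frac{(-1)^{k-1}L_{nk+mnq}^2\prod_{j=1}^{m-1}L_{nk+jnq}\prod_{j=m+1}^{2m-1}L_{nk+jnq}}{\prod_{j=0}^{m-1}F_{nk+jnq}\prod_{j=m+1}^{2m}F_{nk+jnq}}\,,
\]
which is the left-hand side of the corollary, completing the argument.

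Since the whole proof is a substitution plus an algebraic regrouping, there is no genuine obstacle; the only care needed is in tracking the index ranges of the products when the $j=m$ term is extracted, and the one nontrivial input, $F_{2w}=F_wL_w$, is immediate from the identities already recorded.
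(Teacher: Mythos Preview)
Your proof is correct and is exactly the approach the paper takes: the paper states that Corollary~\ref{thm.ee6ees5} ``is obtained by specifically requiring $m$ to be even in Theorem~\ref{thm.cgjiqk9}'', which is precisely your substitution $m\to 2m$. Your explicit use of $F_{2w}=F_wL_w$ and the extraction of the $j=m$ factor from the products is just the routine algebra that the paper leaves implicit.
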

In particular,
\begin{equation}
\sum_{k = 1}^\infty  {\frac{{( - 1)^{k - 1} L_{nk + nq}^2 }}{{F_{nk} F_{nk + 2nq} }}}  = \frac{1}{2}\sum_{k = 1}^q {( - 1)^{k - 1} \frac{{L_{nk} L_{nk + nq} }}{{F_{nk} F_{nk + nq} }}},\quad\mbox{$q$ odd}\,.
\end{equation}
\subsubsection*{Proof of Theorem~\ref{thm.cgjiqk9} and Corollary~\ref{thm.ee6ees5}}
Dividing through the identity~\eqref{equ.ejrnkwy} by $F_uF_v$ and setting $u=nk+mnq$ and $v=nk$ we have the identity
\begin{equation}\label{equ.klrep5j}
2\frac{{F_{2nk + mnq} }}{{F_{nk} F_{nk + mnq} }} = \frac{{L_{nk + mnq} }}{{F_{nk + mnq} }} + \frac{{L_{nk} }}{{F_{nk} }}\,.
\end{equation}
If $q$ is odd, then from~\eqref{equ.klrep5j} and with $f(k)=L_k/F_k$ in Lemma~\ref{finqodd} we have
\begin{equation}\label{equ.bo7m7gb}
\begin{split}
&2\sum_{k = 1}^N {\frac{{( - 1)^{k - 1} F_{2nk + mnq} \prod_{j = 1}^{m - 1} {L_{nk + jnq} } }}{{\prod_{j = 0}^m {F_{nk + jnq} } }}}\\
&\qquad\qquad= \sum_{k = 1}^q {( - 1)^{k - 1} \prod_{j = 0}^{m - 1} {\frac{{L_{nk + jnq} }}{{F_{nk + jnq} }}} }  + ( - 1)^{N - 1} \sum_{k = 1}^q {( - 1)^{k - 1} \prod_{j = 0}^{m - 1} {\frac{{L_{nk + nN + jnq} }}{{F_{nk + nN + jnq} }}} }\,,
\end{split}
\end{equation}
from which Theorem~\eqref{thm.cgjiqk9} follows in the limit as $N$ approaches infinity. Corollary~\eqref{thm.ee6ees5} is obtained by specifically requiring $m$ to be even in Theorem~\eqref{thm.cgjiqk9}.
\begin{thm}\label{thm.f9a8imt}
If $m$, $n$, $q$ and $p$ are positive integers, then
\[
\begin{split}
&\sum_{k = 1}^\infty  {\left\{ {\frac{{( - 1)^{nk - 1} \prod_{j = 1}^{m - 1} {F_{nk + jnq + np} } }}{{\prod_{j = 0}^m {F_{nk + jnq} } }}} \right\}}\\
&\qquad= \frac{{\phi ^{mnp} q}}{{F_{mnq} F_{np} }} - \frac{1}{{F_{mnq} F_{np} }}\sum_{k = 1}^q {\left\{ {\prod_{j = 0}^{m - 1} {\frac{{F_{nk + jnq + np} }}{{F_{nk + jnq} }}} } \right\}}\,.
\end{split}
\]

\end{thm}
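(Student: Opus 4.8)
The plan is to follow the template already used for Theorems~\ref{thm.vn8ph53} and~\ref{thm.cgjiqk9}: extract a two‑term (telescopable) identity from one of the Vajda/Howard identities listed in the introduction, feed it into Lemma~\ref{finall}, and pass to the limit $N\to\infty$. The natural choice of generating sequence here is $f(k)=F_{k+np}/F_k$, because then $\prod_{j=1}^{m-1}f(nk+jnq)$ reproduces precisely the ratio $\bigl(\prod_{j=1}^{m-1}F_{nk+jnq+np}\bigr)\big/\bigl(\prod_{j=1}^{m-1}F_{nk+jnq}\bigr)$ occurring in the statement.

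First I would derive the key two‑term identity. Dividing identity~\eqref{equ.tokbcvq}, namely $(-1)^tF_uF_v=F_{t+u}F_{t+v}-F_tF_{t+u+v}$, through by $F_tF_{t+v}$ and then putting $t=nk$, $u=np$, $v=mnq$ yields
\[
\frac{(-1)^{nk}F_{np}F_{mnq}}{F_{nk}F_{nk+mnq}}=\frac{F_{nk+np}}{F_{nk}}-\frac{F_{nk+mnq+np}}{F_{nk+mnq}}=f(nk)-f(nk+mnq).
\]
Multiplying by $-1/(F_{np}F_{mnq})$, then by $\prod_{j=1}^{m-1}f(nk+jnq)$, and using $F_{nk}F_{nk+mnq}\prod_{j=1}^{m-1}F_{nk+jnq}=\prod_{j=0}^{m}F_{nk+jnq}$, the summand of the theorem is rewritten as
\[
\frac{(-1)^{nk-1}\prod_{j=1}^{m-1}F_{nk+jnq+np}}{\prod_{j=0}^{m}F_{nk+jnq}}=\frac{-1}{F_{np}F_{mnq}}\Bigl[\bigl(f(nk)-f(nk+mnq)\bigr)\prod_{j=1}^{m-1}f(nk+jnq)\Bigr],
\]
which is exactly the shape handled by~\eqref{equ.tejlmvh}.

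Next I would apply the telescoping identity~\eqref{infall} (equivalently, Lemma~\ref{finall} followed by $N\to\infty$) with this $f$. The sequence is convergent, with $f_\infty=\lim_{N\to\infty}F_{nN+np}/F_{nN}=\phi^{np}$ by the first limit in~\eqref{equ.y4ibpqi}, so the right‑hand side of~\eqref{infall} equals $\sum_{k=1}^{q}\prod_{j=0}^{m-1}\bigl(F_{nk+jnq+np}/F_{nk+jnq}\bigr)-(\phi^{np})^{m}q$. Multiplying through by $-1/(F_{np}F_{mnq})$ and noting $(\phi^{np})^{m}=\phi^{mnp}$ produces the claimed closed form.

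The only genuine work is sign and index bookkeeping: checking that $(-1)^{nk}=-(-1)^{nk-1}$ produces the sign in the statement, that the shift $j\mapsto j+1$ in the telescoping collapse matches $\prod_{j=0}^{m-1}f(nk+jnq+nq)=\prod_{j=1}^{m}f(nk+jnq)$, and that the empty‑product convention makes the $m=1$ case degenerate correctly to $\sum_{k}(-1)^{nk-1}/(F_{nk}F_{nk+nq})$. Absolute convergence of the series is automatic from the telescoping representation (the summand has two more Fibonacci factors in the denominator than in the numerator, hence decays geometrically), so no separate convergence estimate is needed beyond invoking~\eqref{infall}.
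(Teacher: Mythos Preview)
Your argument is correct and follows essentially the same route as the paper: derive the two-term identity from~\eqref{equ.tokbcvq} (your choice $t=nk$, $u=np$, $v=mnq$ is equivalent, by symmetry in $u,v$, to the paper's $t=nk$, $u=mnq$, $v=np$), take $f(k)=F_{k+np}/F_k$ in Lemma~\ref{finall}/identity~\eqref{infall}, and compute $f_\infty=\phi^{np}$ via~\eqref{equ.y4ibpqi}. The bookkeeping you outline (signs, index shift, the $m=1$ empty-product check) is exactly what is needed and matches the paper's derivation.
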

In particular we have
\begin{equation}
\sum_{k = 1}^\infty  {\left\{ {\frac{{( - 1)^{nk - 1} \prod_{j = m + 1}^{p + m - 1} {F_{nk + jn} } }}{{\prod_{j = 0}^p {F_{nk + jn} } }}} \right\}}  = \frac{{\phi ^{mnp} }}{{F_{mn} F_{pn} }} - \frac{1}{{F_{mn} F_{pn} }}\prod_{j = 0}^{m - 1} {\frac{{F_{jn + np + n} }}{{F_{jn + n} }}}
\end{equation}
and
\begin{equation}\label{equ.b2p05i3}
\sum_{k = 1}^\infty  {\frac{{( - 1)^{nk - 1} }}{{F_{nk} F_{nk + nq} }}}  = \frac{{\phi ^{n} q}}{{F_{nq} F_{n} }} - \frac{1}{{F_{nq} F_{n} }}\sum_{k = 1}^q {\frac{{F_{nk + n} }}{{F_{nk} }}}\,.
\end{equation}
Observe that identity~\eqref{equ.b2p05i3} is equivalent to identity~\eqref{equ.xk94y11} but with a different form for the right hand side. Since $2\phi=\sqrt 5+1$, \mbox{$F_{n-1}+F_{n+1}=L_n$} and $\phi^n = \phi F_n+F_{n-1}$, both identities can be combined to yield the following interesting summation identity which is valid for all non-zero integers~$n$ and non-negative integers~$q$:
\begin{equation}
\frac{1}{{F_n }}\sum\limits_{k = 1}^q {\left[ {\frac{{F_{nk + n} }}{{F_{nk} }}} \right]}  - \frac{1}{2}\sum\limits_{k = 1}^q {\left[ {\frac{{L_{nk} }}{{F_{nk} }}} \right]}  = \frac{q}{2}\frac{{L_n }}{{F_n }}\,.
\end{equation}
\begin{thm}\label{thm.vdv26oc}
If $m$, $n$, $q$ and $p$ are positive integers such that $n$ is odd and $q$ is even, then
\[
\begin{split}
&\sum_{k = 1}^\infty  {\left\{ {\frac{{ \prod_{j = 1}^{m - 1} {F_{nk + jnq + np} } }}{{\prod_{j = 0}^m {F_{nk + jnq} } }}} \right\}}\\
&\qquad=\frac{1}{{F_{mnq} F_{np} }}\sum_{k = 1}^q {\left\{ {( - 1)^{k}\prod_{j = 0}^{m - 1} {\frac{{F_{nk + jnq + np} }}{{F_{nk + jnq} }}} } \right\}}\,.
\end{split}
\]

\end{thm}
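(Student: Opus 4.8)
The plan is to re-run, in the alternating setting, the telescoping argument behind Theorem~\ref{thm.f9a8imt}: the very same summand-level identity is fed into~\eqref{infallalt} in place of~\eqref{infall}. The relation needed is the one used to prove Theorem~\ref{thm.f9a8imt}, which follows from Howard's identity~\eqref{equ.cqtsjoj} on taking $v=nk$, $u=mnq$ and Howard's free parameter equal to $np-mnq$ (equivalently, from dividing~\eqref{equ.tokbcvq} by $F_{nk}F_{nk+mnq}$ and simplifying via $F_{-np}=(-1)^{np+1}F_{np}$): it reads
\begin{equation*}
\frac{F_{nk+np}}{F_{nk}}-\frac{F_{nk+mnq+np}}{F_{nk+mnq}}=\frac{(-1)^{nk}F_{mnq}F_{np}}{F_{nk}F_{nk+mnq}}\,.
\end{equation*}
Hence, writing $f(k)=F_{k+np}/F_k$, one has $f(nk)-f(nk+mnq)=(-1)^{nk}F_{mnq}F_{np}/(F_{nk}F_{nk+mnq})$.

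First I would invoke the hypothesis that $n$ is odd, which replaces $(-1)^{nk}$ by $(-1)^{k}$, so that $(-1)^{k-1}\bigl[f(nk)-f(nk+mnq)\bigr]=-F_{mnq}F_{np}/(F_{nk}F_{nk+mnq})$, with no leftover $k$-dependent sign. Next I would check the convergence hypotheses required by~\eqref{infallalt}: by Binet's formula $f(k)=F_{k+np}/F_k$ is convergent, and the two subsequences $f(2Nn)$ and $f((2N-1)n)$ both tend to the common limit $\phi^{np}$ as $N\to\infty$ by~\eqref{equ.y4ibpqi}. Since $q$ is even, \eqref{infallalt} then applies with the upper ($-$) sign.

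Then I would substitute $f(k)=F_{k+np}/F_k$ into~\eqref{infallalt}. Using $\prod_{j=1}^{m-1}f(nk+jnq)=\prod_{j=1}^{m-1}\bigl(F_{nk+jnq+np}/F_{nk+jnq}\bigr)$ together with the factorization $F_{nk}F_{nk+mnq}\prod_{j=1}^{m-1}F_{nk+jnq}=\prod_{j=0}^{m}F_{nk+jnq}$, the left-hand side of~\eqref{infallalt} collapses to $-F_{mnq}F_{np}$ times the sum to be evaluated, $\sum_{k\ge1}\bigl(\prod_{j=1}^{m-1}F_{nk+jnq+np}\bigr)\big/\bigl(\prod_{j=0}^{m}F_{nk+jnq}\bigr)$, while its right-hand side equals $\sum_{k=1}^{q}(-1)^{k-1}\prod_{j=0}^{m-1}\bigl(F_{nk+jnq+np}/F_{nk+jnq}\bigr)$. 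Dividing through by $-F_{mnq}F_{np}$ and using $-(-1)^{k-1}=(-1)^{k}$ yields precisely the asserted identity.

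I expect no serious obstacle here: the theorem is a sign-variant companion of Theorem~\ref{thm.f9a8imt}. The two points that need care are (i) verifying the two-subsequence convergence condition demanded by~\eqref{infallalt} (rather than the single limit used in~\eqref{infall}), and (ii) bookkeeping of signs, since it is the combination of ``$n$ odd'' (which collapses $(-1)^{nk}$ to $(-1)^{k}$) with the division by the negative quantity $-F_{mnq}F_{np}$ that converts the $(-1)^{k-1}$ on the finite side of~\eqref{infallalt} into the $(-1)^{k}$ appearing in the statement.
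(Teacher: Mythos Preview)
Your proposal is correct and follows essentially the same route as the paper: the paper derives the identity~\eqref{equ.oi2ipiz} from~\eqref{equ.tokbcvq}, takes $f(k)=F_{k+np}/F_k$ in Lemma~\ref{finqeven} to obtain the finite-$N$ identity, and then lets $N\to\infty$, whereas you invoke the packaged infinite version~\eqref{infallalt} directly. The sign bookkeeping and the convergence check you describe are exactly what is needed.
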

In particular,
\begin{equation}\label{equ.y9fdju9}
\sum_{k = 1}^\infty  {\frac{1}{{F_{nk} F_{nk + nq} }} = \frac{1}{{F_nF_{nq} }}\sum_{k = 1}^q {\left[ {( - 1)^k \frac{{F_{nk+n} }}{{F_{nk} }}} \right]} },\quad\mbox{$n$ odd, $q$ even}\,.
\end{equation}
From identity~\eqref{equ.lldx63d} and identity~\eqref{equ.y9fdju9} we have the interesting result
\begin{equation}
\frac{1}{2}\sum_{k = 1}^q {\left[ {( - 1)^k \frac{{L_{nk} }}{{F_{nk} }}} \right]}  = \frac{1}{{F_n }}\sum_{k = 1}^q {\left[ {( - 1)^k \frac{{F_{nk + n} }}{{F_{nk} }}} \right]}\,,\quad\mbox{$q$ even}\,.
\end{equation}
\subsection*{Proof of Theorem~\ref{thm.f9a8imt} and Theorem~\ref{thm.vdv26oc}}
Dividing through identity~\eqref{equ.tokbcvq} by $F_{t+u}F_t$ and choosing $t=nk$, $u=mnq$ and $v=np$ we obtain the identity:
\begin{equation}\label{equ.oi2ipiz}
\frac{{( - 1)^{nk - 1} F_{mnq} F_{np} }}{{F_{nk} F_{nk + mnq} }} = \frac{{F_{nk + mnq + np} }}{{F_{nk + mnq} }} - \frac{{F_{nk + np} }}{{F_{nk} }}\,.
\end{equation}
With $f(k)=F_{k+np}/F_k$ in Lemma~\ref{finall} and using the identity~\eqref{equ.oi2ipiz}, we have the finite summation identity
\begin{equation}
\begin{split}
&F_{mnq} F_{np} \sum_{k = 1}^N {\left\{ {\frac{{( - 1)^{nk - 1} \prod_{j = 1}^{m - 1} {F_{nk + jnq + np} } }}{{\prod_{j = 0}^m {F_{nk + jnq} } }}} \right\}}\\
&\quad= \sum_{k = 1}^q {\left\{ {\prod_{j = 0}^{m - 1} {\frac{{F_{nk + nN + jnq + np} }}{{F_{nk + nN + jnq} }}} } \right\}}  - \sum_{k = 1}^q {\left\{ {\prod_{j = 0}^{m - 1} {\frac{{F_{nk + jnq + np} }}{{F_{nk + jnq} }}} } \right\}}\,,
\end{split}
\end{equation}
from which Theorem~\ref{thm.f9a8imt} follows in the limit as $N$ approaches infinity. 

\bigskip

Using $f(k)=F_{k+np}/F_k$ in Lemma~\ref{finqeven} gives
\begin{equation}
\begin{split}
&F_{mnq} F_{np} \sum_{k = 1}^N {\left\{ {\frac{{\prod_{j = 1}^{m - 1} {F_{nk + jnq + np} } }}{{\prod_{j = 0}^m {F_{nk + jnq} } }}} \right\}}\\ 
&\quad = ( - 1)^N \sum_{k = 1}^q {\left\{ {( - 1)^{k - 1} \prod_{j = 0}^{m - 1} {\frac{{F_{nk + nN + jnq + np} }}{{F_{nk + nN + jnq} }}} } \right\}}\\ 
&\qquad - \sum_{k = 1}^q {\left\{ {( - 1)^{k - 1} \prod_{j = 0}^{m - 1} {\frac{{F_{nk + jnq + np} }}{{F_{nk + jnq} }}} } \right\}}\,, 
\end{split}
\end{equation}
from which Theorem~\ref{thm.vdv26oc} follows.
\subsection{Sums with $L_{nk}L_{nk+nq}\cdots L_{nk+mnq}$ or\\ $L_{nk}L_{nk+nq}\cdots L_{nk+mnq-nq}L_{nk+mnq+nq}\cdots L_{nk+2mnq}$ in the denominator}
The derivations here proceed in the same fashion as in the previous section. The theorems will therefore be stated without proof. The analogous identity to~\eqref{equ.oy4215f} is 
\begin{equation}\label{equ.rcy3566}
\begin{split}
&2F_{mnq} \sum_{k = 1}^N {\left[ {( - 1)^{nk - 1} \frac{{\prod_{j = 1}^{m - 1} {F_{nk + jnq} } }}{{\prod_{j = 0}^m {L_{nk + jnq} } }}} \right]}\\
&\quad  = \sum_{k = 1}^q {\left[ {\prod_{j = 0}^{m - 1} {\frac{{F_{nk + jnq} }}{{L_{nk + jnq} }}} } \right]}-\sum_{k = 1}^q {\left[ {\prod_{j = 0}^{m - 1} {\frac{{F_{nk + nN + jnq} }}{{L_{nk + nN + jnq} }}} } \right]}\,.
\end{split}
\end{equation}
\begin{thm}\label{thm.v8d2lop}
If $m$, $n$ and $q$ are positive integers, then
\[
\sum_{k = 1}^\infty  {\left[ {( - 1)^{nk - 1} \frac{{\prod_{j = 1}^{m - 1} {F_{nk + jnq} } }}{{\prod_{j = 0}^m {L_{nk + jnq} } }}} \right]}  = \frac{1}{{2F_{mnq} }}\sum_{k = 1}^q {\prod_{j = 0}^{m - 1} {\frac{{F_{nk + jnq} }}{{L_{nk + jnq} }}} }-\frac{{q }}{{2F_{mnq}\sqrt{ 5^m} }}\,, 
\]
so that
\begin{equation}
\sum_{k = 1}^\infty  {\left[ {\frac{{\prod_{j = 1}^{m - 1} {F_{nk + jnq} } }}{{\prod_{j = 0}^m {L_{nk + jnq} } }}} \right]}  = \frac{{q }}{{2F_{mnq}\sqrt{ 5^m} }}-\frac{1}{{2F_{mnq} }}\sum_{k = 1}^q {\prod_{j = 0}^{m - 1} {\frac{{F_{nk + jnq} }}{{L_{nk + jnq} }}} },\quad\mbox{$n$ even}
\end{equation}
and
\begin{equation}
\sum_{k = 1}^\infty  {\left[ {( - 1)^{k - 1} \frac{{\prod_{j = 1}^{m - 1} {F_{nk + jnq} } }}{{\prod_{j = 0}^m {L_{nk + jnq} } }}} \right]}  = \frac{1}{{2F_{mnq} }}\sum_{k = 1}^q {\prod_{j = 0}^{m - 1} {\frac{{F_{nk + jnq} }}{{L_{nk + jnq} }}} }-\frac{{q }}{{2F_{mnq}\sqrt{ 5^m} }} ,\quad\mbox{$n$ odd}\,.
\end{equation}

\end{thm}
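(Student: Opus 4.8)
The plan is to run the proof of Theorem~\ref{thm.vn8ph53} essentially verbatim, interchanging the roles of $F$ and $L$ in the summand. First I would take identity~\eqref{equ.moytk3x}, set $u=nk+mnq$ and $v=nk$ there, and this time divide through by $L_{nk}L_{nk+mnq}$ rather than by $F_{nk}F_{nk+mnq}$. This produces the single-step identity
\[
\frac{(-1)^{nk-1}2F_{mnq}}{L_{nk}L_{nk+mnq}}=\frac{F_{nk}}{L_{nk}}-\frac{F_{nk+mnq}}{L_{nk+mnq}}\,,
\]
valid for all positive integers $k$, $m$, $n$, $q$.

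Next I would apply Lemma~\ref{finall} with $f(k)=F_k/L_k$; the right-hand side of the display above is exactly $f(nk)-f(nk+mnq)$, so multiplying through by $\prod_{j=1}^{m-1}f(nk+jnq)$ and summing over $k$ from $1$ to $N$ turns the left-hand side into $2F_{mnq}\sum_{k=1}^N(-1)^{nk-1}\frac{\prod_{j=1}^{m-1}F_{nk+jnq}}{\prod_{j=0}^mL_{nk+jnq}}$, while the telescoping delivers precisely the finite identity~\eqref{equ.rcy3566} already recorded above. The one bookkeeping point is the sign: here the single-step identity is written as $f(nk)-f(nk+mnq)$, matching the left-hand side of Lemma~\ref{finall} directly, whereas in the proof of Theorem~\ref{thm.vn8ph53} the corresponding identity came out as $f(nk+mnq)-f(nk)$; this is why the two $q$-sums on the right of~\eqref{equ.rcy3566} occur in the opposite order to those in~\eqref{equ.oy4215f}.

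Finally I would pass to the limit $N\to\infty$. By the limiting values~\eqref{equ.y4ibpqi} we have $\lim_{N\to\infty}F_{nk+nN+jnq}/L_{nk+nN+jnq}=1/\sqrt5$ for each fixed $k$ and $j$, so the shifted $q$-sum in~\eqref{equ.rcy3566} tends to $q(1/\sqrt5)^m=q/\sqrt{5^m}$; equivalently, this is the instance of~\eqref{infall} with $f_\infty=\lim_{N\to\infty}F_{nN}/L_{nN}=1/\sqrt5$. Dividing the resulting identity through by $2F_{mnq}$ yields the first assertion of the theorem. The two specializations then follow at once: if $n$ is even then $(-1)^{nk-1}=-1$ for every $k$, and negating both sides gives the ``$n$ even'' formula; if $n$ is odd then $nk-1\equiv k-1\pmod 2$, so $(-1)^{nk-1}=(-1)^{k-1}$ and the ``$n$ odd'' formula is merely a relabelling.

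I do not anticipate a genuine obstacle, since the argument is structurally identical to the Fibonacci case treated in Theorem~\ref{thm.vn8ph53}. The only things needing care are keeping the signs straight --- the exponent $nk-1$ and the order of the telescoped $q$-sums --- and reading off the new limiting constant $1/\sqrt5$ in place of the $\sqrt5$ that appears there; everything else is the same telescoping-plus-Binet mechanism packaged in Lemma~\ref{finall}, identity~\eqref{infall}, and the limits~\eqref{equ.y4ibpqi}.
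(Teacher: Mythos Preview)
Your proposal is correct and follows essentially the same approach as the paper: the paper explicitly states that the derivations in this section proceed exactly as for Theorem~\ref{thm.vn8ph53}, records the finite identity~\eqref{equ.rcy3566}, and leaves the limit $N\to\infty$ (with $f_\infty=1/\sqrt5$) to the reader. Your treatment of the single-step identity from~\eqref{equ.moytk3x}, the application of Lemma~\ref{finall} with $f(k)=F_k/L_k$, the sign bookkeeping explaining the reversed order of the $q$-sums relative to~\eqref{equ.oy4215f}, and the parity specializations all match the paper's intended argument.
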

In particular,
\begin{equation}\label{equ.elnysff}
\sum_{k = 1}^\infty  {\frac{{( - 1)^{nk - 1} }}{{L_{nk} L_{nk + nq} }}}  =\frac{1}{{2F_{nq} }}\sum_{k = 1}^q {\frac{{F_{nk} }}{{L_{nk} }}}-\frac{{q }}{{2F_{nq}\sqrt{ 5} }},\quad n,q\in\Zplus\,.
\end{equation}
The case $n=3,q=1$ in~\eqref{equ.elnysff} was mentioned by Brousseau (equation~(14) of~\cite{brousseau1}).
\begin{thm}\label{thm.llpbu3h}
If $m$, $n$ and $q$ are integers such that $n$ is odd and $q$ is even, then
\[
\sum_{k = 1}^\infty  {\left[ {\frac{{\prod_{j = 1}^{m - 1} {F_{nk + jnq} } }}{{\prod_{j = 0}^m {L_{nk + jnq} } }}} \right]}  = \frac{1}{{2F_{mnq} }}\sum_{k = 1}^q {\left[ {( - 1)^{k-1} \prod_{j = 0}^{m - 1} {\frac{{F_{nk + jnq} }}{{L_{nk + jnq} }}} } \right]}\,. 
\]
\end{thm}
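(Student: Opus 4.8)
\subsubsection*{Proof of Theorem~\ref{thm.llpbu3h}}

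The plan is to mirror the proof of Theorem~\ref{thm.hxvgf6w}, interchanging the roles of $F$ and $L$ in the summand, so that the ratio $L_k/F_k$ used there is replaced throughout by $F_k/L_k$. First I would record the pointwise identity obtained by dividing~\eqref{equ.moytk3x}, that is $(-1)^v 2F_{u-v} = F_u L_v - L_u F_v$, by $L_u L_v$ and then setting $u = nk + mnq$, $v = nk$:
\[
\frac{(-1)^{nk-1}\, 2F_{mnq}}{L_{nk}\, L_{nk+mnq}} = \frac{F_{nk}}{L_{nk}} - \frac{F_{nk+mnq}}{L_{nk+mnq}} .
\]
This is exactly the identity underlying~\eqref{equ.rcy3566}. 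Hence, with $f(k) = F_k/L_k$, the telescoping combination $f(nk) - f(nk+mnq)$ equals $(-1)^{nk-1} 2 F_{mnq}/(L_{nk} L_{nk+mnq})$, and on multiplying by $\prod_{j=1}^{m-1} f(nk+jnq) = \prod_{j=1}^{m-1} F_{nk+jnq}/L_{nk+jnq}$ the denominator collapses to $\prod_{j=0}^{m} L_{nk+jnq}$ and the numerator to $\prod_{j=1}^{m-1} F_{nk+jnq}$, which is precisely the summand of the theorem up to the constant factor $2F_{mnq}$ and the sign $(-1)^{nk-1}$.

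Next I would insert $f(k) = F_k/L_k$ into the alternating limiting identity~\eqref{infallalt}. Its hypotheses are met: by Binet's formula $f(k) \to 1/\sqrt 5$, so $f$ is convergent, and both $f(2Nn)$ and $f((2N-1)n)$ have this same limit $1/\sqrt 5$. Since $q$ is even, the upper (subtractive) sign applies in~\eqref{infallalt}, so its left-hand side features exactly the combination $f(nk) - f(nk+mnq)$ of the previous step; and since $n$ is odd, $(-1)^{nk-1} = (-1)^{k-1}$, so the external factor $(-1)^{k-1}$ in~\eqref{infallalt} cancels the sign $(-1)^{nk-1}$ generated by the pointwise identity. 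This leaves
\[
2F_{mnq} \sum_{k=1}^\infty \frac{\prod_{j=1}^{m-1} F_{nk+jnq}}{\prod_{j=0}^{m} L_{nk+jnq}} = \sum_{k=1}^q (-1)^{k-1} \prod_{j=0}^{m-1} \frac{F_{nk+jnq}}{L_{nk+jnq}} ,
\]
and dividing by $2F_{mnq}$ yields the claim. An equivalent route is to start instead from the alternating analogue of the finite identity~\eqref{equ.rcy3566} — obtained by putting $f(k) = F_k/L_k$ in Lemma~\ref{finqeven} — and then let $N \to \infty$; here the boundary term $(-1)^{N-1}\sum_{k=1}^q (-1)^{k-1}\prod_{j=0}^{m-1} F_{nk+nN+jnq}/L_{nk+nN+jnq}$ tends to $0$, because each product tends to the common value $5^{-m/2}$ by~\eqref{equ.y4ibpqi} while $\sum_{k=1}^q (-1)^{k-1} = 0$ for even $q$.

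The computation is routine; the only thing demanding care is the sign bookkeeping. Note that the Lucas-denominator pointwise identity carries a \emph{plus} sign, $f(nk) - f(nk+mnq) = +(-1)^{nk-1} 2 F_{mnq}/(L_{nk} L_{nk+mnq})$, in contrast with the Fibonacci-denominator case of Theorem~\ref{thm.hxvgf6w}, where the analogous combination carries a minus sign; this is precisely what turns the $(-1)^k$ of Theorem~\ref{thm.hxvgf6w} into the $(-1)^{k-1}$ appearing here. One should also keep the two hypotheses in their proper roles: $n$ odd is used only to replace $(-1)^{nk-1}$ by $(-1)^{k-1}$, while $q$ even is used to select the subtractive sign in~\eqref{infallalt} (equivalently, to kill the boundary sum in the finite-sum approach).
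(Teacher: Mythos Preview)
Your proof is correct and follows exactly the approach the paper intends: the section containing Theorem~\ref{thm.llpbu3h} states that the derivations mirror those of Section~\ref{sec.u8lgixm}, and your argument is precisely the Lucas-denominator analogue of the proof of Theorem~\ref{thm.hxvgf6w}, taking $f(k)=F_k/L_k$ in identity~\eqref{infallalt} (equivalently in Lemma~\ref{finqeven}) in place of $f(k)=L_k/F_k$. Your sign bookkeeping and your explanation of why the boundary term vanishes for even $q$ are accurate.
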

In particular,
\begin{equation}\label{equ.x6dcfpg}
\sum_{k = 1}^\infty  {\frac{1}{{L_{nk} L_{nk + nq} }} = \frac{1}{{2F_{nq} }}\sum_{k = 1}^q {\left[ {( - 1)^{k-1} \frac{{F_{nk} }}{{L_{nk} }}} \right]} },\quad\mbox{$n$ odd, $q$ even}\,.
\end{equation}

\begin{thm}\label{thm.jysuj3n}
If $m$, $n$ and $q$ are positive \underline{odd} integers, then
\[
\sum_{k = 1}^\infty  {\left[ {\frac{1}{{\prod_{j = 0}^{m - 1} {L_{nk + njq} } \prod_{j = m + 1}^{2m} {L_{nk + njq} } }}} \right]}  = \frac{1}{{L_{mnq} }}\sum_{k = 1}^q {\left[ {\frac{1}{{\prod_{j = 0}^{2m - 1} {L_{nk + njq} } }}} \right]}\,. 
\]

\end{thm}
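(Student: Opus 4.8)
The plan is to mirror the proof of Theorem~\ref{thm.xd29ih0}, replacing the Fibonacci identity~\eqref{equ.mzt8c69} by its Lucas counterpart~\eqref{equ.ztsb3uk}. First I would set $v = mnq$ and $u = nk + mnq$ in~\eqref{equ.ztsb3uk}, obtaining
\[
L_{mnq} L_{nk + mnq} = L_{nk + 2mnq} + (-1)^{mnq} L_{nk}\,.
\]
Since $m$, $n$ and $q$ are all odd, $mnq$ is odd, so $(-1)^{mnq} = -1$ and the right-hand side collapses to the difference $L_{nk + 2mnq} - L_{nk}$. Dividing through by $L_{nk} L_{nk + 2mnq}$ then gives the Lucas analogue of~\eqref{equ.eo7cizi},
\[
\frac{L_{mnq} L_{nk + mnq}}{L_{nk} L_{nk + 2mnq}} = \frac{1}{L_{nk}} - \frac{1}{L_{nk + 2mnq}}\,,\qquad\mbox{$mnq$ odd}\,.
\]

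Next I would apply Lemma~\ref{finall} with $f(k) = 1/L_k$ and with $m$ replaced by $2m$. Multiplying the last display by $\prod_{j = 1}^{2m - 1} 1/L_{nk + jnq}$, the bracketed difference $f(nk) - f(nk + 2mnq)$ together with that product becomes $L_{mnq} L_{nk + mnq}\prod_{j = 0}^{2m} 1/L_{nk + jnq}$; here the extra numerator factor $L_{nk + mnq}$ cancels the $j = m$ term of the denominator, which is exactly the source of the missing $j = m$ factor in the statement. Lemma~\ref{finall} therefore yields the finite summation identity
\[
L_{mnq}\sum_{k = 1}^N \frac{1}{\prod_{j = 0}^{m - 1} L_{nk + jnq}\prod_{j = m + 1}^{2m} L_{nk + jnq}} = \sum_{k = 1}^q \frac{1}{\prod_{j = 0}^{2m - 1} L_{nk + jnq}} - \sum_{k = 1}^q \frac{1}{\prod_{j = 0}^{2m - 1} L_{nk + nN + jnq}}\,,
\]
which is the exact Lucas analogue of the finite identity displayed in the proof of Theorem~\ref{thm.xd29ih0}.

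Finally I would let $N \to \infty$. Because the Lucas numbers grow without bound, every factor $L_{nk + nN + jnq}$ tends to infinity, so the trailing sum on the right vanishes in the limit; equivalently, one may invoke~\eqref{infall} with $f_\infty = 0$, which simultaneously guarantees convergence of the series on the left (its $k$-th term decays exponentially). Dividing by $L_{mnq} \neq 0$ gives the asserted formula. I do not anticipate a genuine obstacle: the only points demanding care are the parity check $(-1)^{mnq} = -1$, which is precisely where the hypothesis that $m$, $n$ and $q$ are all odd enters, and the index bookkeeping when the numerator factor $L_{nk + mnq}$ is matched against the $j = m$ slot of the $2m$-fold product.
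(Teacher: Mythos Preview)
Your proposal is correct and is precisely the approach the paper intends: it states that the theorems in this Lucas section are proved ``in the same fashion as in the previous section'' and are therefore given without proof, and your argument is exactly the Lucas analogue of the proof of Theorem~\ref{thm.xd29ih0}, using~\eqref{equ.ztsb3uk} in place of~\eqref{equ.mzt8c69} and $f(k)=1/L_k$ in Lemma~\ref{finall} (with $m\to 2m$). Note that the identity you derive is recorded later in the paper as~\eqref{equ.rgq8asw}.
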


\begin{thm}\label{thm.eniunx3}
If $q$, $m$ and $n$ are positive integers such that $q$ is odd and $nm$ is even, then
\[
\sum_{k = 1}^\infty  {\left[ {\frac{(-1)^{k-1}}{{\prod_{j = 0}^{m - 1} {L_{nk + njq} } \prod_{j = m + 1}^{2m} {L_{nk + njq} } }}} \right]}  = \frac{1}{{L_{mnq} }}\sum_{k = 1}^q {\left[ {\frac{(-1)^{k-1}}{{\prod_{j = 0}^{2m - 1} {L_{nk + njq} } }}} \right]}\,. 
\]
\end{thm}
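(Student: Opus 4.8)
The plan is to follow the pattern used for Theorem~\ref{thm.i3apd20}, replacing the Fibonacci telescoping identity by its Lucas counterpart. First I would take identity~\eqref{equ.ztsb3uk}, $L_vL_u = L_{u+v} + (-1)^vL_{u-v}$, and specialize it with $v=mnq$ and $u=nk+mnq$, which gives
\[
L_{mnq}L_{nk+mnq} = L_{nk+2mnq} + (-1)^{mnq}L_{nk}\,.
\]
Dividing through by $L_{nk}L_{nk+2mnq}$ yields
\[
\frac{L_{mnq}L_{nk+mnq}}{L_{nk}L_{nk+2mnq}} = \frac{1}{L_{nk}} + \frac{(-1)^{mnq}}{L_{nk+2mnq}}\,,
\]
and since $nm$ is even we have $(-1)^{mnq}=1$, so the right-hand side is $1/L_{nk} + 1/L_{nk+2mnq}$. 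This is the Lucas analogue of identity~\eqref{equ.w1u72rp}, and it is of exactly the additive telescoping shape required by Lemma~\ref{finqodd}.

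Next, since $q$ is odd, I would apply Lemma~\ref{finqodd} with $f(k)=1/L_k$ and with $m$ replaced by $2m$. The left-hand summand of that lemma becomes
\[
(-1)^{k-1}\left[\frac{1}{L_{nk}} + \frac{1}{L_{nk+2mnq}}\right]\prod_{j=1}^{2m-1}\frac{1}{L_{nk+jnq}}\,,
\]
which by the identity above equals $(-1)^{k-1}L_{mnq}L_{nk+mnq}\big/\bigl(L_{nk}L_{nk+2mnq}\prod_{j=1}^{2m-1}L_{nk+jnq}\bigr)$. Cancelling the numerator factor $L_{nk+mnq}$ against the $j=m$ term of the product leaves exactly $(-1)^{k-1}L_{mnq}$ divided by $\prod_{j=0}^{m-1}L_{nk+jnq}\prod_{j=m+1}^{2m}L_{nk+jnq}$. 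Hence Lemma~\ref{finqodd} delivers a finite summation identity in which $L_{mnq}$ times the partial sum $\sum_{k=1}^N$ of the target summand equals $\sum_{k=1}^q(-1)^{k-1}\big/\prod_{j=0}^{2m-1}L_{nk+jnq}$ together with the boundary term $(-1)^{N-1}\sum_{k=1}^q(-1)^{k-1}\big/\prod_{j=0}^{2m-1}L_{nk+nN+jnq}$.

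Finally I would let $N\to\infty$: each Lucas factor $L_{nk+nN+jnq}$ diverges, so the boundary sum vanishes, and dividing by $L_{mnq}$ then gives the stated identity. The only things needing attention — and none is subtle — are the index bookkeeping in the product (verifying that after the cancellation the missing factor really sits at $j=m$, so that the denominator splits as $\prod_{j=0}^{m-1}\cdots\prod_{j=m+1}^{2m}\cdots$) and the matching of hypotheses: $q$ odd is what lets us invoke Lemma~\ref{finqodd}, while $nm$ even is precisely what forces the plus sign in the telescoping identity. The companion Theorem~\ref{thm.jysuj3n} is proved the same way, except that there $mnq$ is odd, the sign in the telescoping identity is minus, and one uses Lemma~\ref{finall} in place of Lemma~\ref{finqodd}.
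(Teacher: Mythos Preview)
Your argument is correct and mirrors exactly the approach the paper intends: it states that the Lucas-version theorems in this section ``proceed in the same fashion as in the previous section'' and are therefore stated without proof, so your derivation---specializing identity~\eqref{equ.ztsb3uk} to obtain the Lucas analogue of~\eqref{equ.w1u72rp} and then invoking Lemma~\ref{finqodd} with $f(k)=1/L_k$ and $m\to 2m$---is precisely the omitted proof.
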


Analogous identity to identity~\eqref{equ.bo7m7gb} is
\begin{equation}\label{equ.s11wekj}
\begin{split}
&2\sum_{k = 1}^N {\frac{{( - 1)^{k - 1} F_{2nk + mnq} \prod_{j = 1}^{m - 1} {F_{nk + jnq} } }}{{\prod_{j = 0}^m {L_{nk + jnq} } }}}\\
&\qquad\qquad= \sum_{k = 1}^q {( - 1)^{k - 1} \prod_{j = 0}^{m - 1} {\frac{{F_{nk + jnq} }}{{L_{nk + jnq} }}} }  + ( - 1)^{N - 1} \sum_{k = 1}^q {( - 1)^{k - 1} \prod_{j = 0}^{m - 1} {\frac{{F_{nk + nN + jnq} }}{{L_{nk + nN + jnq} }}} }\,,
\end{split}
\end{equation}
from which we get the following theorem in the limit as $N$ approaches infinity.
\begin{thm}\label{thm.oe85fcz}
If $m$, $n$ and $q$ are positive integers such that $q$ is odd, then
\[
\sum_{k = 1}^\infty  {\frac{{( - 1)^{k - 1} F_{2nk + mnq} \prod_{j = 1}^{m - 1} {F_{nk + jnq} } }}{{\prod_{j = 0}^m {L_{nk + jnq} } }}}  = \frac{1}{2}\sum_{k = 1}^q {( - 1)^{k - 1} \prod_{j = 0}^{m - 1} {\frac{{F_{nk + jnq} }}{{L_{nk + jnq} }}} }\,. 
\]
\end{thm}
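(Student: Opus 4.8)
The plan is to transcribe the proof of Theorem~\ref{thm.cgjiqk9} with the roles of Fibonacci and Lucas numbers in the denominator interchanged; since the needed finite identity~\eqref{equ.s11wekj} has already been recorded above, the task is to explain where it comes from and then to pass to the limit. The one genuinely new computation is the algebraic seed identity. Rather than dividing~\eqref{equ.ejrnkwy} by $F_uF_v$ (which produced~\eqref{equ.klrep5j}), I would divide $2F_{u+v}=L_vF_u+L_uF_v$ through by $L_uL_v$, obtaining $2F_{u+v}/(L_uL_v)=F_u/L_u+F_v/L_v$, and then set $u=nk+mnq$, $v=nk$, which yields
\[
\frac{2F_{2nk+mnq}}{L_{nk}L_{nk+mnq}}=\frac{F_{nk+mnq}}{L_{nk+mnq}}+\frac{F_{nk}}{L_{nk}}
\]
for all positive integers $k,m,n,q$.

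Since $q$ is odd, I would then apply Lemma~\ref{finqodd} with $f(k)=F_k/L_k$. By the seed identity the bracketed quantity $\bigl[\,f(nk)+f(nk+mnq)\,\bigr]\prod_{j=1}^{m-1}f(nk+jnq)$ equals $\frac{2F_{2nk+mnq}}{L_{nk}L_{nk+mnq}}\prod_{j=1}^{m-1}\frac{F_{nk+jnq}}{L_{nk+jnq}}$, and since $L_{nk}\,L_{nk+mnq}\prod_{j=1}^{m-1}L_{nk+jnq}=\prod_{j=0}^{m}L_{nk+jnq}$ this collapses to $\frac{2F_{2nk+mnq}\prod_{j=1}^{m-1}F_{nk+jnq}}{\prod_{j=0}^{m}L_{nk+jnq}}$. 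Substituting into Lemma~\ref{finqodd} reproduces exactly identity~\eqref{equ.s11wekj}; the telescoping is already packaged inside the lemma, so this step amounts to index bookkeeping only.

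Finally I would let $N\to\infty$ in~\eqref{equ.s11wekj}. By the limiting values~\eqref{equ.y4ibpqi}, $F_r/L_r\to1/\sqrt5$ as $r\to\infty$, so every factor $\frac{F_{nk+nN+jnq}}{L_{nk+nN+jnq}}$ tends to $1/\sqrt5$; in particular $f(k)=F_k/L_k$ has the same limit along $2Nn$ and along $(2N-1)n$, so the hypotheses behind~\eqref{infallalt} are satisfied and the trailing term $(-1)^{N-1}\sum_{k=1}^{q}(-1)^{k-1}\prod_{j=0}^{m-1}\frac{F_{nk+nN+jnq}}{L_{nk+nN+jnq}}$ makes no net contribution in the limit. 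What survives is $2\sum_{k=1}^{\infty}(\cdots)=\sum_{k=1}^{q}(-1)^{k-1}\prod_{j=0}^{m-1}\frac{F_{nk+jnq}}{L_{nk+jnq}}$, and dividing by $2$ is the assertion.

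I expect the only delicate point to be this last passage — confirming that the oscillating trailing sum washes out as $N\to\infty$ — which is handled exactly as for Theorem~\ref{thm.cgjiqk9} through identity~\eqref{infallalt}, already available in the excerpt; everything else is a direct analogue of the Fibonacci-denominator argument.
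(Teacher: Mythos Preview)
Your proposal is correct and follows essentially the same route as the paper. The paper derives the finite identity~\eqref{equ.s11wekj} as the Lucas-denominator analogue of~\eqref{equ.bo7m7gb} (i.e., divide~\eqref{equ.ejrnkwy} by $L_uL_v$ instead of $F_uF_v$, set $u=nk+mnq$, $v=nk$, and feed $f(k)=F_k/L_k$ into Lemma~\ref{finqodd}) and then passes to the limit $N\to\infty$, exactly as you do; your write-up simply makes explicit the steps that the paper leaves implicit by saying ``the derivations here proceed in the same fashion as in the previous section.''
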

\begin{cor}\label{thm.q95bigi}
If $m$, $n$ and $q$ are positive integers such that $q$ is odd, then
\[
\sum_{k = 1}^\infty  {\frac{{( - 1)^{k - 1} F_{nk + mnq}^2 \prod_{j = 1}^{m - 1} {F_{nk + jnq} } \prod_{j = m + 1}^{2m - 1} {F_{nk + jnq} } }}{{\prod_{j = 0}^{m - 1} {L_{nk + jnq} } \prod_{j = m + 1}^{2m} {L_{nk + jnq} } }}}  = \frac{1}{2}\sum_{k = 1}^q {( - 1)^{k - 1} \prod_{j = 0}^{2m - 1} {\frac{{F_{nk + jnq} }}{{L_{nk + jnq} }}} }\,.
\]

\end{cor}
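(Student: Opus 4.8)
\textbf{Proof proposal for Corollary~\ref{thm.q95bigi}.}

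The plan is to obtain Corollary~\ref{thm.q95bigi} from Theorem~\ref{thm.oe85fcz} by specializing the running index $m$ to an even value, in exactly the way Corollary~\ref{thm.ee6ees5} was deduced from Theorem~\ref{thm.cgjiqk9}. The only auxiliary fact required is the duplication formula $F_{2x}=F_xL_x$, which follows at once from~\eqref{equ.ejrnkwy} on putting $u=v=x$.

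First I would replace $m$ by $2m$ throughout Theorem~\ref{thm.oe85fcz}. This does not affect the hypothesis that $q$ be odd, and it yields
\[
\sum_{k=1}^\infty\frac{(-1)^{k-1}F_{2nk+2mnq}\prod_{j=1}^{2m-1}F_{nk+jnq}}{\prod_{j=0}^{2m}L_{nk+jnq}}=\frac12\sum_{k=1}^q(-1)^{k-1}\prod_{j=0}^{2m-1}\frac{F_{nk+jnq}}{L_{nk+jnq}}\,.
\]
The right-hand side is already precisely that of Corollary~\ref{thm.q95bigi}, so everything reduces to rewriting the summand on the left.

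Next I would apply $F_{2x}=F_xL_x$ with $x=nk+mnq$ to turn $F_{2nk+2mnq}$ into $F_{nk+mnq}L_{nk+mnq}$, and then split the two products about the central index $j=m$: in the numerator $\prod_{j=1}^{2m-1}F_{nk+jnq}$ splits off a central factor $F_{nk+mnq}$, which combines with the one just produced to give $F_{nk+mnq}^{2}$ flanked by $\prod_{j=1}^{m-1}F_{nk+jnq}$ and $\prod_{j=m+1}^{2m-1}F_{nk+jnq}$; in the denominator $\prod_{j=0}^{2m}L_{nk+jnq}$ splits off a central factor $L_{nk+mnq}$, which cancels the Lucas factor generated by the duplication step, leaving $\prod_{j=0}^{m-1}L_{nk+jnq}$ and $\prod_{j=m+1}^{2m}L_{nk+jnq}$. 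The summand is then identical to the one in the corollary.

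No genuine difficulty arises; the proof is pure bookkeeping, and the single point demanding care is tracking the multiplicities of $F_{nk+mnq}$ and $L_{nk+mnq}$ across the split, so that one indeed ends up with a square in the numerator and with the central Lucas factor \emph{removed} (not squared) from the denominator. Convergence of the rearranged series is inherited directly from Theorem~\ref{thm.oe85fcz}, since the manipulation is merely an algebraic rewriting of its summand.
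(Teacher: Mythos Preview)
Your proposal is correct and follows exactly the route the paper intends: by analogy with the derivation of Corollary~\ref{thm.ee6ees5} from Theorem~\ref{thm.cgjiqk9} (where the paper says the corollary ``is obtained by specifically requiring $m$ to be even''), Corollary~\ref{thm.q95bigi} is obtained from Theorem~\ref{thm.oe85fcz} by replacing $m$ with $2m$, applying $F_{2x}=F_xL_x$ at $x=nk+mnq$, and splitting both products about the central index $j=m$. Your bookkeeping is accurate, including the cancellation of the central Lucas factor.
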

Corresponding to identity~\eqref{equ.oi2ipiz} of section~\ref{sec.u8lgixm} we have (from identity~\eqref{equ.owip3z8})
\begin{equation}
\frac{{( - 1)^{nk - 1} 5F_{mnq} F_{np} }}{{L_{nk} L_{nk + mnq} }} = -\frac{{L_{nk + mnq + np} }}{{L_{nk + mnq} }} + \frac{{L_{nk + np} }}{{L_{nk} }}\,,
\end{equation}
leading to the summation identities
\begin{equation}
\begin{split}
&5F_{mnq} F_{np} \sum_{k = 1}^N {\left\{ {\frac{{( - 1)^{nk - 1} \prod_{j = 1}^{m - 1} {L_{nk + jnq + np} } }}{{\prod_{j = 0}^m {L_{nk + jnq} } }}} \right\}}\\
&\quad= -\sum_{k = 1}^q {\left\{ {\prod_{j = 0}^{m - 1} {\frac{{L_{nk + nN + jnq + np} }}{{L_{nk + nN + jnq} }}} } \right\}}  + \sum_{k = 1}^q {\left\{ {\prod_{j = 0}^{m - 1} {\frac{{L_{nk + jnq + np} }}{{L_{nk + jnq} }}} } \right\}}
\end{split}
\end{equation}
and
\begin{equation}
\begin{split}
&5F_{mnq} F_{np} \sum_{k = 1}^N {\left\{ {\frac{{\prod_{j = 1}^{m - 1} {L_{nk + jnq + np} } }}{{\prod_{j = 0}^m {L_{nk + jnq} } }}} \right\}}\\ 
&\quad = ( - 1)^{N-1} \sum_{k = 1}^q {\left\{ {( - 1)^{k - 1} \prod_{j = 0}^{m - 1} {\frac{{L_{nk + nN + jnq + np} }}{{L_{nk + nN + jnq} }}} } \right\}}\\ 
&\qquad + \sum_{k = 1}^q {\left\{ {( - 1)^{k - 1} \prod_{j = 0}^{m - 1} {\frac{{L_{nk + jnq + np} }}{{L_{nk + jnq} }}} } \right\}}\,, 
\end{split}
\end{equation}
from which Theorem~\ref{thm.lk7ts7r} and Theorem~\ref{thm.ec1t6t2} follow.
\begin{thm}\label{thm.lk7ts7r}
If $m$, $n$, $q$ and $p$ are positive integers, then
\[
\begin{split}
&\sum_{k = 1}^\infty  {\left\{ {\frac{{( - 1)^{nk - 1} \prod_{j = 1}^{m - 1} {L_{nk + jnq + np} } }}{{\prod_{j = 0}^m {L_{nk + jnq} } }}} \right\}}\\
&\qquad= -\frac{{\phi ^{mnp} q}}{{5F_{mnq} F_{np} }} + \frac{1}{{5F_{mnq} F_{np} }}\sum_{k = 1}^q {\left\{ {\prod_{j = 0}^{m - 1} {\frac{{L_{nk + jnq + np} }}{{L_{nk + jnq} }}} } \right\}}\,.
\end{split}
\]

\end{thm}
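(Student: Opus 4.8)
\emph{Proof proposal.} The argument is the Lucas-denominator analogue of the proof of Theorem~\ref{thm.f9a8imt}, and in fact the essential computation is already recorded in the two displays immediately preceding the statement; what remains is only to pass to the limit. The plan is: (i) obtain the two-term ``difference of ratios'' identity from~\eqref{equ.owip3z8}, (ii) feed it into the telescoping Lemma~\ref{finall} with $f(k)=L_{k+np}/L_k$, and (iii) take $N\to\infty$ using the limiting ratios~\eqref{equ.y4ibpqi}.

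For step (i), divide identity~\eqref{equ.owip3z8} by $L_tL_{t+u}$ and put $t=nk$, $u=mnq$, $v=np$; since $(-1)^{nk+1}=(-1)^{nk-1}$ this gives
\[
\frac{(-1)^{nk-1}\,5F_{mnq}F_{np}}{L_{nk}L_{nk+mnq}}=\frac{L_{nk+np}}{L_{nk}}-\frac{L_{nk+mnq+np}}{L_{nk+mnq}},
\]
which is exactly the identity quoted above Theorem~\ref{thm.lk7ts7r}. For step (ii), with $f(k)=L_{k+np}/L_k$ the left side above is $f(nk)-f(nk+mnq)$; multiplying through by $\prod_{j=1}^{m-1}f(nk+jnq)=\prod_{j=1}^{m-1}L_{nk+jnq+np}/L_{nk+jnq}$ merges the denominator blocks $L_{nk}$, $\prod_{j=1}^{m-1}L_{nk+jnq}$ and $L_{nk+mnq}$ into $\prod_{j=0}^{m}L_{nk+jnq}$ and the numerators into $\prod_{j=1}^{m-1}L_{nk+jnq+np}$. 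Summing over $1\le k\le N$ and applying Lemma~\ref{finall} reproduces precisely the finite summation identity displayed just above the statement — this is the only bookkeeping, and it is routine.

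For step (iii), invoke the infinite form~\eqref{infall} with the same $f$. By~\eqref{equ.y4ibpqi} we have $f(nN)=L_{nN+np}/L_{nN}\to\phi^{np}$, so $f_\infty=\phi^{np}$ and hence $f_\infty^{\,m}=\phi^{mnp}$; equivalently, the ``tail'' sum $\sum_{k=1}^{q}\prod_{j=0}^{m-1}L_{nk+nN+jnq+np}/L_{nk+nN+jnq}$ tends to $q\phi^{mnp}$ since each of its $m$ factors tends to $\phi^{np}$. Dividing the resulting relation through by $5F_{mnq}F_{np}$ gives the asserted closed form. Convergence of the series itself is not in question, the summand being $O(\phi^{-2nk})$.

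I do not anticipate a genuine obstruction: the algebraic identity~\eqref{equ.owip3z8} and the telescoping Lemma~\ref{finall} carry all the weight, and the only points that need a moment's care are the collapse of the denominator and numerator products in step (ii) and the observation in step (iii) that the tail product runs over $j=0,\dots,m-1$ — that is, over $m$ factors — so that its limit is $\phi^{mnp}$ rather than $\phi^{np}$.
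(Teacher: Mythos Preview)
Your proposal is correct and follows essentially the same route as the paper: derive the two-term ratio identity from~\eqref{equ.owip3z8}, apply Lemma~\ref{finall} with $f(k)=L_{k+np}/L_k$ to obtain the finite summation identity displayed just before the theorem, and then pass to the limit using~\eqref{equ.y4ibpqi} so that the tail sum collapses to $q\phi^{mnp}$. The paper carries out exactly these steps, so there is nothing to add.
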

In particular we have
\begin{equation}
\sum_{k = 1}^\infty  {\left\{ {\frac{{( - 1)^{nk - 1} \prod_{j = m + 1}^{p + m - 1} {L_{nk + jn} } }}{{\prod_{j = 0}^p {L_{nk + jn} } }}} \right\}}  = -\frac{{\phi ^{mnp} }}{{5F_{mn} F_{pn} }} + \frac{1}{{5F_{mn} F_{pn} }}\prod_{j = 0}^{m - 1} {\frac{{L_{jn + np + n} }}{{L_{jn + n} }}}
\end{equation}
and
\begin{equation}
\sum_{k = 1}^\infty  {\frac{{( - 1)^{nk - 1} }}{{L_{nk} L_{nk + nq} }}}  = -\frac{{\phi ^{n} q}}{{5F_{nq} F_{n} }} + \frac{1}{{5F_{nq} F_{n} }}\sum_{k = 1}^q {\frac{{L_{nk + n} }}{{L_{nk} }}}\,.
\end{equation}

\begin{thm}\label{thm.ec1t6t2}
If $m$, $n$, $q$ and $p$ are positive integers such that $n$ is odd and $q$ is even, then
\[
\begin{split}
&\sum_{k = 1}^\infty  {\left\{ {\frac{{ \prod_{j = 1}^{m - 1} {L_{nk + jnq + np} } }}{{\prod_{j = 0}^m {L_{nk + jnq} } }}} \right\}}\\
&\qquad=\frac{1}{{5F_{mnq} F_{np} }}\sum_{k = 1}^q {\left\{ {( - 1)^{k-1}\prod_{j = 0}^{m - 1} {\frac{{L_{nk + jnq + np} }}{{L_{nk + jnq} }}} } \right\}}\,.
\end{split}
\]

\end{thm}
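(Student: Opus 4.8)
The plan is to run the Lucas analogue of the argument used for Theorem~\ref{thm.f9a8imt} and Theorem~\ref{thm.vdv26oc}; the only change is that the starting two-term identity now comes from~\eqref{equ.owip3z8} rather than from~\eqref{equ.tokbcvq}. Throughout I would take $f(k)=L_{k+np}/L_{k}$, so that, after a sign adjustment made available by $n$ being odd, the summand on the left of the theorem is a constant multiple of the quantity $[f(nk)-f(nk+mnq)]\prod_{j=1}^{m-1}f(nk+jnq)$ appearing in Lemma~\ref{finqeven} and in the infinite identity~\eqref{infallalt}.

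First I would record the two-term identity: dividing~\eqref{equ.owip3z8} through by $L_{u}L_{v}$ and setting $t=nk$, $u=mnq$, $v=np$ gives
\[
\frac{(-1)^{nk-1}\,5F_{mnq}F_{np}}{L_{nk}L_{nk+mnq}}=\frac{L_{nk+np}}{L_{nk}}-\frac{L_{nk+mnq+np}}{L_{nk+mnq}}=f(nk)-f(nk+mnq).
\]
Because $n$ is odd, $(-1)^{nk-1}=(-1)^{k-1}$, so multiplying by $(-1)^{k-1}$ removes the sign on the left. Multiplying further by $\prod_{j=1}^{m-1}f(nk+jnq)=\prod_{j=1}^{m-1}\bigl(L_{nk+jnq+np}/L_{nk+jnq}\bigr)$ and using the factorisation $L_{nk}L_{nk+mnq}\prod_{j=1}^{m-1}L_{nk+jnq}=\prod_{j=0}^{m}L_{nk+jnq}$, one obtains
\[
(-1)^{k-1}\Bigl[f(nk)-f(nk+mnq)\Bigr]\prod_{j=1}^{m-1}f(nk+jnq)=5F_{mnq}F_{np}\,\frac{\prod_{j=1}^{m-1}L_{nk+jnq+np}}{\prod_{j=0}^{m}L_{nk+jnq}}.
\]

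Next, since $q$ is even and $f$ is convergent with $f(k)\to\phi^{np}$ (so that $f(2Nn)$ and $f((2N-1)n)$ share this limit, as~\eqref{infallalt} requires), I would apply~\eqref{infallalt} with the upper (minus) sign to this $f$; equivalently, one may start from the finite-$N$ identity obtained by plugging this $f$ into Lemma~\ref{finqeven} (it is the identity displayed just above Theorem~\ref{thm.lk7ts7r}) and let $N\to\infty$. The left side of~\eqref{infallalt} is then $5F_{mnq}F_{np}$ times the series in the theorem, and the right side is $\sum_{k=1}^{q}(-1)^{k-1}\prod_{j=0}^{m-1}f(nk+jnq)=\sum_{k=1}^{q}(-1)^{k-1}\prod_{j=0}^{m-1}\bigl(L_{nk+jnq+np}/L_{nk+jnq}\bigr)$. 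Dividing through by $5F_{mnq}F_{np}$ gives exactly the asserted formula.

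The one step that is more than bookkeeping is the vanishing of the boundary term in the limit $N\to\infty$. In the finite-$N$ identity it is $(-1)^{N-1}\sum_{k=1}^{q}(-1)^{k-1}\prod_{j=0}^{m-1}\bigl(L_{nk+nN+jnq+np}/L_{nk+nN+jnq}\bigr)$, and the prefactor $(-1)^{N-1}$ does not converge. The resolution is that each ratio tends to $\phi^{np}$ by~\eqref{equ.y4ibpqi}, so the inner $k$-sum tends to $\phi^{mnp}\sum_{k=1}^{q}(-1)^{k-1}$, which is $0$ because $q$ is even; hence the whole boundary term tends to $0$, and this also establishes convergence of the series on the left. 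It is precisely at this point (and in the earlier cancellation of $(-1)^{nk-1}$) that the hypotheses that $q$ is even and that $n$ is odd enter; everything else is routine manipulation of the products.
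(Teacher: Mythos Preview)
Your proof is correct and follows essentially the same route as the paper: derive the two-term identity from~\eqref{equ.owip3z8}, set $f(k)=L_{k+np}/L_k$, apply Lemma~\ref{finqeven} (equivalently~\eqref{infallalt} with the upper sign since $q$ is even), and pass to the limit. Your explicit treatment of why the boundary term vanishes (each ratio tends to $\phi^{np}$ so the alternating inner sum tends to $\phi^{mnp}\sum_{k=1}^{q}(-1)^{k-1}=0$) is more detailed than the paper, which simply records the finite-$N$ identity and asserts that the theorem follows; one trivial slip is that you should say you divide~\eqref{equ.owip3z8} by $L_tL_{t+u}$ rather than by $L_uL_v$, but the identity you then write down is correct.
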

In particular,
\begin{equation}\label{equ.a0f8aex}
\sum_{k = 1}^\infty  {\frac{1}{{L_{nk} L_{nk + nq} }} = \frac{1}{{5F_nF_{nq} }}\sum_{k = 1}^q {\left[ {( - 1)^{k-1} \frac{{L_{nk+n} }}{{L_{nk} }}} \right]} },\quad\mbox{$n$ odd, $q$ even}\,.
\end{equation}
From identity~\eqref{equ.x6dcfpg} and identity~\eqref{equ.a0f8aex} we have
\begin{equation}
\frac{1}{2}\sum_{k = 1}^q {\left[ {( - 1)^{k-1} \frac{{F_{nk} }}{{L_{nk} }}} \right]}  = \frac{1}{{5F_n }}\sum_{k = 1}^q {\left[ {( - 1)^{k-1} \frac{{L_{nk + n} }}{{L_{nk} }}} \right]}\,,\quad\mbox{$q$ even}\,.
\end{equation}

\subsection{Sums with $F_{nk}F_{nk+nq}\cdots F_{nk+2mnq}$ in the denominator}
The results in this section are obtained from identity~\eqref{equ.yb05ue2}. We have
\[
\frac{{F_v L_u }}{{F_{u - v} F_{u + v} }}=\frac{1}{{F_{u - v} }} - \frac{{( - 1)^v }}{{F_{u + v} }}\,,
\]
from which, by setting $v=mnq$ and $u=nk+mnq$, we get
\begin{equation}
\frac{{F_{mnq} L_{nk + mnq} }}{{F_{nk} F_{nk+2mnq} }} = \frac{1}{{F_{nk} }} - \frac{{( - 1)^{mnq} }}{{F_{nk+2mnq} }}\,,
\end{equation}
so that
\begin{equation}\label{equ.srgqhu2}
\frac{{F_{mnq} L_{nk + mnq} }}{{F_{nk} F_{nk+2mnq} }} = \frac{1}{{F_{nk} }} - \frac{1}{{F_{nk+2mnq} }},\quad\quad\mbox{$mnq$ even}
\end{equation}
and
\begin{equation}\label{equ.ih5bg09}
\frac{{F_{mnq} L_{nk + mnq} }}{{F_{nk} F_{nk+2mnq} }} = \frac{1}{{F_{nk} }} + \frac{1}{{F_{nk+2mnq} }},\quad\mbox{$mnq$ odd}\,.
\end{equation}
The derivations then proceed as in the previous sections.
\begin{thm}\label{thm.k2v2cn5}
If $m$, $n$ and $q$ are positive integers such that $mnq$ is even, then
\[
\sum_{k = 1}^\infty  {\left[ {\frac{{L_{nk + mnq} }}{{\prod_{j = 0}^{2m} {F_{nk + njq} } }}} \right]}  = \frac{1}{{F_{mnq} }}\sum_{k = 1}^q {\left[ {\frac{1}{{\prod_{j = 0}^{2m - 1} {F_{nk + njq} } }}} \right]} 
\]

\end{thm}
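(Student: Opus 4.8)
\textbf{Proof plan for Theorem~\ref{thm.k2v2cn5}.}
The plan is to mimic the proof of Theorem~\ref{thm.xd29ih0}, using the partial-fraction identity~\eqref{equ.srgqhu2} in place of~\eqref{equ.eo7cizi} and applying Lemma~\ref{finall} with $m$ replaced by $2m$. Since $mnq$ is assumed even, identity~\eqref{equ.srgqhu2} reads
\[
\frac{F_{mnq}L_{nk+mnq}}{F_{nk}F_{nk+2mnq}}=\frac{1}{F_{nk}}-\frac{1}{F_{nk+2mnq}},
\]
which is a difference of precisely the shape handled by Lemma~\ref{finall}.

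First I would set $f(k)=1/F_k$ and multiply both sides of the displayed identity by the product $\prod_{j=1}^{2m-1}f(nk+jnq)=\prod_{j=1}^{2m-1}1/F_{nk+jnq}$. Since $F_{nk}\cdot\bigl(\prod_{j=1}^{2m-1}F_{nk+jnq}\bigr)\cdot F_{nk+2mnq}=\prod_{j=0}^{2m}F_{nk+jnq}$, the left-hand side becomes $F_{mnq}L_{nk+mnq}\big/\prod_{j=0}^{2m}F_{nk+jnq}$, while, by the computation in~\eqref{equ.tejlmvh} with $m\to 2m$, the right-hand side becomes exactly the telescoping expression $\prod_{j=0}^{2m-1}f(nk+jnq)-\prod_{j=0}^{2m-1}f(nk+jnq)\big|_{k\to k+q}$. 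Summing over $k$ from $1$ to $N$ and invoking Lemma~\ref{finall} (with $m\to 2m$) then gives the finite identity
\[
F_{mnq}\sum_{k=1}^N\frac{L_{nk+mnq}}{\prod_{j=0}^{2m}F_{nk+jnq}}=\sum_{k=1}^q\frac{1}{\prod_{j=0}^{2m-1}F_{nk+jnq}}-\sum_{k=1}^q\frac{1}{\prod_{j=0}^{2m-1}F_{nk+nN+jnq}}.
\]

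Finally I would let $N\to\infty$. Because $n\ge 1$, every factor $F_{nk+nN+jnq}$ tends to infinity, so the trailing sum tends to $0$; equivalently, one may apply~\eqref{infall} directly with $f_\infty=\lim_{N\to\infty}1/F_{nN}=0$, which annihilates the $f_\infty{}^{2m}q$ term. Convergence of the series on the left is not an issue, since by Binet's formula its summand decays geometrically (like $\phi^{-2mnk}$). Dividing by $F_{mnq}$ yields the claimed identity. There is essentially no genuine obstacle here beyond careful index bookkeeping under the substitution $m\to 2m$; the one point worth flagging is that the hypothesis ``$mnq$ even'' is exactly what turns~\eqref{equ.srgqhu2} into a \emph{difference} $f(nk)-f(nk+2mnq)$ rather than a sum, so that the non-alternating Lemma~\ref{finall} applies (the odd case being handled separately via Lemma~\ref{finqodd}).
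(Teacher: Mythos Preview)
Your proposal is correct and follows essentially the same approach as the paper: the paper derives identity~\eqref{equ.srgqhu2} from~\eqref{equ.yb05ue2} and then simply remarks that ``the derivations then proceed as in the previous sections,'' which means exactly what you did---apply Lemma~\ref{finall} with $f(k)=1/F_k$ and $m\to 2m$, obtain the finite-sum identity, and let $N\to\infty$.
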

Examples from Theorem~\ref{thm.k2v2cn5} include:
\begin{equation}\label{equ.qchc8p4}
\begin{split}
&\mbox{At $(m,n,q)=(1,2,1)$ and $(m,n,q)=(1,1,2)$:}\\
&\sum_{k = 1}^\infty  {\frac{{L_{2k + 2} }}{{F_{2k} F_{2k + 2} F_{2k + 4} }}}  = \frac{1}{3},\quad\sum_{k = 1}^\infty  {\frac{{L_{k + 2} }}{{F_k F_{k + 2} F_{k + 4} }}}  = \frac{5}{6}\,.
\end{split}
\end{equation}
The first of the identities in~\eqref{equ.qchc8p4} was also derived in~\cite{melham01} (equation~3.7).
\[
\begin{split}
&\mbox{At $(m,n,q)=(2,7,1)$:}\\
&\sum_{k = 1}^\infty  {\frac{{L_{7k + 14} }}{{F_{7k} F_{7k + 7} F_{7k + 14} F_{7k + 21} F_{7k + 28} }}}  = \frac{1}{{{\rm 6427623373464462}}}\,.
\end{split}
\]
\begin{thm}\label{thm.n3errmo}
If $m$, $n$ and $q$ are positive integers such that $q$ is even or $mnq$ is odd, then
\[
\sum_{k = 1}^\infty  {\left[ {\frac{{(-1)^{k-1}L_{nk + mnq} }}{{\prod_{j = 0}^{2m} {F_{nk + njq} } }}} \right]}  = \frac{1}{{F_{mnq} }}\sum_{k = 1}^q {\left[ {\frac{(-1)^{k-1}}{{\prod_{j = 0}^{2m - 1} {F_{nk + njq} } }}} \right]} 
\]

\end{thm}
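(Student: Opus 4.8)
The plan is to reproduce the derivation of Theorem~\ref{thm.k2v2cn5}, but to feed the \emph{alternating} telescoping lemmas in place of Lemma~\ref{finall}. The two ingredients already available in this section are identities \eqref{equ.srgqhu2} and \eqref{equ.ih5bg09}; with the abbreviation $f(k)=1/F_k$ the former says
\[
\frac{F_{mnq}L_{nk+mnq}}{\prod_{j=0}^{2m}F_{nk+jnq}}
= \Bigl[f(nk)-f(nk+2mnq)\Bigr]\prod_{j=1}^{2m-1}f(nk+jnq)\qquad(mnq\text{ even}),
\]
and the latter is the same with a plus sign when $mnq$ is odd. The hypothesis ``$q$ even or $mnq$ odd'' splits precisely into the case $q$ even (which forces $mnq$ even, giving the minus sign) and the case $mnq$ odd (which forces $q$ odd, giving the plus sign), so I treat these two cases, each matched to the lemma whose parity requirement it satisfies.

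For $q$ even I apply Lemma~\ref{finqeven} with $f(k)=1/F_k$ and $m$ replaced by $2m$. By the minus-sign identity above, the quantity summed on the left of that lemma is exactly $(-1)^{k-1}F_{mnq}L_{nk+mnq}\big/\prod_{j=0}^{2m}F_{nk+jnq}$ (the factor $1/(F_{nk}F_{nk+2mnq})$ merges with $\prod_{j=1}^{2m-1}1/F_{nk+jnq}$ into $\prod_{j=0}^{2m}1/F_{nk+jnq}$), so the lemma yields the finite identity
\[
F_{mnq}\sum_{k=1}^N\frac{(-1)^{k-1}L_{nk+mnq}}{\prod_{j=0}^{2m}F_{nk+jnq}}
= \sum_{k=1}^q\frac{(-1)^{k-1}}{\prod_{j=0}^{2m-1}F_{nk+jnq}}
+ (-1)^{N-1}\sum_{k=1}^q\frac{(-1)^{k-1}}{\prod_{j=0}^{2m-1}F_{nk+nN+jnq}}.
\]
For $mnq$ odd (hence $q$ odd) I instead invoke the plus-sign version of the identity together with Lemma~\ref{finqodd} (again with $m\to 2m$); the bracket $[f(nk)+f(nk+2mnq)]\prod_{j=1}^{2m-1}f(nk+jnq)$ collapses to the very same expression, and Lemma~\ref{finqodd} delivers the identical finite identity.

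It remains to pass to the limit $N\to\infty$. The series on the left converges absolutely---by Binet's formula its general term is $O(\phi^{-2mnk})$---and the trailing sum on the right is a fixed number $q$ of terms each of which tends to $0$, since every factor $F_{nk+nN+jnq}$ diverges; hence that sum vanishes in the limit. Dividing through by $F_{mnq}$ (a positive integer, hence nonzero) gives the claimed formula in both cases. The only points demanding care are the index-shift bookkeeping $m\to 2m$ inside the lemmas and checking that the parity of $mnq$ appearing in \eqref{equ.srgqhu2}--\eqref{equ.ih5bg09} is compatible with the parity of $q$ required by the lemma chosen in each case; once that matching is confirmed there is no genuine obstacle.
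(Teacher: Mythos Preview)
Your proof is correct and follows essentially the same route the paper intends: split according to the parity dichotomy in \eqref{equ.srgqhu2}--\eqref{equ.ih5bg09}, feed $f(k)=1/F_k$ with $m\to 2m$ into the alternating telescoping lemmas (Lemma~\ref{finqeven} for $q$ even, Lemma~\ref{finqodd} for $q$ odd with $mnq$ odd), and let $N\to\infty$. The only cosmetic difference is that the paper would cite identity~\eqref{infallalt} directly rather than pass through the finite identities and take the limit explicitly, but that is the same argument.
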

Examples from Theorem~\ref{thm.n3errmo} include:
\[
\begin{split}
&\mbox{At $(m,n,q)=(1,1,2)$ and $(m,n,q)=(1,3,2)$:}\\
&\sum_{k = 1}^\infty  {\frac{{( - 1)^{k - 1} L_{k + 2} }}{{F_k F_{k + 2} F_{k + 4} }}}  = \frac{1}{6}\,,\quad \sum_{k = 1}^\infty  {\frac{{( - 1)^{k - 1} L_{3k + 6} }}{{F_{3k} F_{3k + 6} F_{3k + 12} }}}  = \frac{{271}}{{156672}}\,.
\end{split}
\]
\[
\begin{split}
&\mbox{At $(m,n,q)=(2,4,2)$:}\\
&\sum_{k = 1}^\infty  {\frac{{( - 1)^{k - 1} L_{4k + 16} }}{{F_{4k} F_{4k + 8} F_{4k + 16} F_{4k + 24} F_{4k + 32} }}}  = \frac{{{\rm 177072540680427}}}{{{\rm 166704475185956548320480}}}\,.
\end{split}
\]

\subsection{Sums with $L_{nk}L_{nk+nq}\cdots L_{nk+2mnq}$ in the denominator}
The results here follow from the identity~\eqref{equ.q79u3q4}.
\begin{thm}\label{thm.h6ute1y}
If $m$, $n$ and $q$ are positive integers such that $mnq$ is even, then
\[
\sum_{k = 1}^\infty  {\left[ {\frac{{F_{nk + mnq} }}{{\prod_{j = 0}^{2m} {L_{nk + jnq} } }}} \right]}  =\frac1{5F_{mnq}} \sum_{k = 1}^q {\left[ {\prod_{j = 0}^{2m - 1} {\frac{1}{{L_{nk + jnq} }}} } \right]}\,. 
\]
\end{thm}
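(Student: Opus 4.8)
The plan is to follow the same route as in the proof of Theorem~\ref{thm.k2v2cn5}, but starting from identity~\eqref{equ.q79u3q4} instead of~\eqref{equ.yb05ue2}. First I would divide $5F_vF_u = L_{u+v} - (-1)^vL_{u-v}$ through by $L_{u-v}L_{u+v}$, obtaining
\[
\frac{5F_vF_u}{L_{u-v}L_{u+v}} = \frac{1}{L_{u-v}} - \frac{(-1)^v}{L_{u+v}}\,,
\]
and then set $v=mnq$ and $u=nk+mnq$, so that $u-v=nk$ and $u+v=nk+2mnq$. This gives
\[
\frac{5F_{mnq}F_{nk+mnq}}{L_{nk}L_{nk+2mnq}} = \frac{1}{L_{nk}} - \frac{(-1)^{mnq}}{L_{nk+2mnq}}\,.
\]
The hypothesis that $mnq$ is even enters precisely here: it forces $(-1)^{mnq}=1$, so the right-hand side becomes the plain difference $1/L_{nk} - 1/L_{nk+2mnq}$.

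Next I would apply Lemma~\ref{finall} with $m$ replaced by $2m$ and with the sequence $f(k)=1/L_k$. The bracketed quantity $\bigl[f(nk)-f(nk+2mnq)\bigr]\prod_{j=1}^{2m-1}f(nk+jnq)$ then equals $\frac{5F_{mnq}F_{nk+mnq}}{\prod_{j=0}^{2m}L_{nk+jnq}}$, since $f(nk)$ and $f(nk+2mnq)$ contribute the $j=0$ and $j=2m$ factors of the product $\prod_{j=0}^{2m}L_{nk+jnq}$ while $\prod_{j=1}^{2m-1}f(nk+jnq)$ contributes the remaining ones. Lemma~\ref{finall} therefore yields the finite identity
\[
5F_{mnq}\sum_{k=1}^{N}\frac{F_{nk+mnq}}{\prod_{j=0}^{2m}L_{nk+jnq}} = \sum_{k=1}^{q}\prod_{j=0}^{2m-1}\frac{1}{L_{nk+jnq}} - \sum_{k=1}^{q}\prod_{j=0}^{2m-1}\frac{1}{L_{nk+nN+jnq}}\,.
\]

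Finally I would pass to the limit $N\to\infty$. Since the Lucas numbers $L_{nk+nN+jnq}$ tend to infinity, the last sum vanishes; equivalently one invokes the infinite form~\eqref{infall} directly, with $f_\infty=\lim_{N\to\infty}1/L_{nN}=0$, so that the term $f_\infty{}^{2m}q$ is zero. Dividing through by $5F_{mnq}\neq 0$ produces the stated formula. The only points requiring care are the bookkeeping of the product index ranges under the substitution $m\to 2m$, and the justification that the series converges so the limit may be taken termwise; the latter is automatic, because by Binet's formula the summand decays like $\phi^{-2mn(k+mq)}$ while $F_{mnq}$ is a fixed constant. In particular, no new idea beyond the one used for Theorem~\ref{thm.k2v2cn5} is needed.
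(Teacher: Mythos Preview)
Your proposal is correct and follows exactly the route the paper indicates: section~2.4 states that ``the results here follow from the identity~\eqref{equ.q79u3q4}'', and your derivation---dividing~\eqref{equ.q79u3q4} by $L_{u-v}L_{u+v}$, specialising $v=mnq$, $u=nk+mnq$, invoking the parity hypothesis, and then applying Lemma~\ref{finall} with $f(k)=1/L_k$ and $m\to 2m$---is precisely the Lucas analogue of the proof of Theorem~\ref{thm.k2v2cn5}. The bookkeeping and the limit argument are both handled correctly.
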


\begin{thm}\label{thm.gkxbzjk}
If $m$, $n$ and $q$ are positive integers such that $q$ is even or $mnq$ is odd, then
\[
\sum_{k = 1}^\infty  {(-1)^{k-1}\left[ {\frac{{F_{nk + mnq} }}{{\prod_{j = 0}^{2m} {L_{nk + jnq} } }}} \right]}  = \frac1{5F_{mnq}}\sum_{k = 1}^q {\left[(-1)^{k-1} {\prod_{j = 0}^{2m - 1} {\frac{1}{{L_{nk + jnq} }}} } \right]}\,. 
\]
\end{thm}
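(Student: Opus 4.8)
The plan is to run, for Lucas denominators, the same machine used in the two preceding subsections, now starting from identity~\eqref{equ.q79u3q4} rather than~\eqref{equ.yb05ue2}, and to feed the resulting summand into the alternating telescoping Lemmas~\ref{finqeven} and~\ref{finqodd} (equivalently, into identity~\eqref{infallalt}). In other words, Theorem~\ref{thm.gkxbzjk} should be obtained from~\eqref{equ.q79u3q4} in exactly the way Theorem~\ref{thm.n3errmo} was obtained from~\eqref{equ.yb05ue2}, with $F\leftrightarrow L$ interchanged in the denominators.

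First I would divide identity~\eqref{equ.q79u3q4}, namely $5F_vF_u = L_{u+v}-(-1)^vL_{u-v}$, through by $L_{u-v}L_{u+v}$, obtaining
\[
\frac{5F_vF_u}{L_{u-v}L_{u+v}} = \frac{1}{L_{u-v}} - \frac{(-1)^v}{L_{u+v}},
\]
and then set $v=mnq$ and $u=nk+mnq$, so that $u-v=nk$ and $u+v=nk+2mnq$, giving
\[
\frac{5F_{mnq}F_{nk+mnq}}{L_{nk}L_{nk+2mnq}} = \frac{1}{L_{nk}} - \frac{(-1)^{mnq}}{L_{nk+2mnq}}.
\]
The sign $(-1)^{mnq}$ is what dictates the parity hypothesis: when $mnq$ is even one gets the \emph{difference} $1/L_{nk}-1/L_{nk+2mnq}$, and when $mnq$ is odd one gets the \emph{sum} $1/L_{nk}+1/L_{nk+2mnq}$. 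Observe that ``$q$ even or $mnq$ odd'' splits into precisely these two disjoint cases, since $q$ even forces $mnq$ even, while $mnq$ odd forces $q$ odd.

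Next, with $f(k)=1/L_k$, I would multiply the last display by $\prod_{j=1}^{2m-1}f(nk+jnq)$. In the case $mnq$ even the left-hand side becomes $\bigl[f(nk)-f(nk+2mnq)\bigr]\prod_{j=1}^{2m-1}f(nk+jnq)$, so Lemma~\ref{finqeven} with $m$ replaced by $2m$ (legitimate because $q$ is even in this case) gives a finite summation identity; letting $N\to\infty$ and invoking identity~\eqref{infallalt} with the upper sign — the limiting remainder vanishes because $L_{nk+nN+jnq}\to\infty$ — yields
\[
5F_{mnq}\sum_{k=1}^\infty(-1)^{k-1}\frac{F_{nk+mnq}}{\prod_{j=0}^{2m}L_{nk+jnq}} = \sum_{k=1}^q(-1)^{k-1}\prod_{j=0}^{2m-1}\frac{1}{L_{nk+jnq}}.
\]
In the case $mnq$ odd the left-hand side reads $\bigl[f(nk)+f(nk+2mnq)\bigr]\prod_{j=1}^{2m-1}f(nk+jnq)$ instead, and one applies Lemma~\ref{finqodd} with $m\to 2m$ together with the lower-sign form of~\eqref{infallalt} ($q$ being odd here); the right-hand side is identical. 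Dividing through by $5F_{mnq}$ then gives the asserted formula in both cases.

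I do not expect a real obstacle: the only delicate points are bookkeeping ones — checking that the substitution $m\mapsto 2m$ in the Lemmas turns $\prod_{j=1}^{m-1}$ and $\prod_{j=0}^{m-1}$ into the products $\prod_{j=1}^{2m-1}$ and $\prod_{j=0}^{2m-1}$ that appear in the statement, verifying that the denominator $L_{nk}\,L_{nk+2mnq}\prod_{j=1}^{2m-1}L_{nk+jnq}$ collapses to $\prod_{j=0}^{2m}L_{nk+jnq}$, and recording that both the convergence of the series and the vanishing of the boundary term are immediate from $L_N\to\infty$ (equivalently from the limits~\eqref{equ.y4ibpqi}). The companion non-alternating result, Theorem~\ref{thm.h6ute1y}, follows in exactly the same way using Lemma~\ref{finall} and identity~\eqref{infall} in place of Lemma~\ref{finqeven} and~\eqref{infallalt}.
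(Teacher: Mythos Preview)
Your proposal is correct and follows exactly the approach the paper indicates: the paper simply states that the results of this subsection ``follow from the identity~\eqref{equ.q79u3q4}'' and omits the proofs as analogous to those for Theorem~\ref{thm.n3errmo}, which is precisely the route you take. Your case split (Lemma~\ref{finqeven} when $q$ is even, Lemma~\ref{finqodd} when $mnq$ is odd) with $m\to 2m$ and $f(k)=1/L_k$ is the intended argument.
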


\subsection{Sums with $F_{nk}F_{nk+2nq}F_{nk+4nq}F_{nk+6nq}\cdots F_{nk+2mnq}$ in the denominator}\label{sec.pgpoxnq}
\begin{thm}\label{thm.t7s7spm}
If $m$, $n$ and $q$ are positive \underline{odd} integers, then
\[
\sum_{k=1}^\infty \frac{(\pm1)^{k-1}F_{nk+mnq}}{\prod_{j=0}^m F_{nk+2jnq}}=\frac1{L_{mnq}}\sum_{k=1}^{2q}\frac{(\pm1)^{k-1}}{\prod_{j=0}^{m-1}F_{nk+2jnq}}\,.
\]
\end{thm}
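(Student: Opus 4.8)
The plan is to recognize that the summand is, up to the constant factor $L_{mnq}$, exactly the telescoping difference already isolated in identity~\eqref{equ.eo7cizi}, and then to apply the infinite telescoping identities~\eqref{infall} and~\eqref{infallalt} with the parameter $q$ there replaced by $2q$, so that the step in the products matches the spacing $2nq$ occurring in the denominator of the summand.

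Concretely, since $m$, $n$ and $q$ are all odd, $mnq$ is odd, so identity~\eqref{equ.eo7cizi} is the applicable form here and reads
\[
\frac{L_{mnq}F_{nk+mnq}}{F_{nk}F_{nk+2mnq}}=\frac1{F_{nk}}-\frac1{F_{nk+2mnq}}\,.
\]
Multiplying through by $\prod_{j=1}^{m-1}1/F_{nk+2jnq}$ and writing $f(k)=1/F_k$, the left-hand side becomes $L_{mnq}F_{nk+mnq}/\prod_{j=0}^m F_{nk+2jnq}$, while the right-hand side becomes exactly $[f(nk)-f(nk+2mnq)]\prod_{j=1}^{m-1}f(nk+2jnq)$, that is, the summand appearing on the left of Lemma~\ref{finall} and of Lemma~\ref{finqeven} once the (legitimate) substitution $q\mapsto 2q$ is made in those statements.

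Since $f(nN)=1/F_{nN}\to 0$ as $N\to\infty$, identity~\eqref{infall} with $q\mapsto 2q$ gives
\[
L_{mnq}\sum_{k=1}^\infty\frac{F_{nk+mnq}}{\prod_{j=0}^m F_{nk+2jnq}}=\sum_{k=1}^{2q}\,\prod_{j=0}^{m-1}\frac1{F_{nk+2jnq}}\,,
\]
which, after division by $L_{mnq}$, is the $(+1)^{k-1}$ case of the theorem. For the $(-1)^{k-1}$ case I would also multiply the kernel by $(-1)^{k-1}$ and invoke identity~\eqref{infallalt} with $q\mapsto 2q$: the even- and odd-index subsequence limits of $f$ are both $0$, and since $2q$ is even the upper sign in~\eqref{infallalt} is the one selected, so its right-hand side is $\sum_{k=1}^{2q}(-1)^{k-1}\prod_{j=0}^{m-1}1/F_{nk+2jnq}$; dividing by $L_{mnq}$ finishes that case.

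No step presents a genuine difficulty; the two points worth attention are that the hypothesis that $m$, $n$ and $q$ are odd is used precisely to force $(-1)^{mnq}=-1$, so that identity~\eqref{equ.eo7cizi} rather than~\eqref{equ.w1u72rp} applies, and that the correct substitution in the telescoping lemmas is $q\mapsto 2q$, not $m\mapsto 2m$ — it is the spacing inside the products, not their length, that has to be doubled, which is also why the finite sum on the right of the theorem runs up to $2q$.
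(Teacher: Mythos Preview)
Your proof is correct and follows essentially the same route as the paper: identity~\eqref{equ.eo7cizi} with $mnq$ odd is combined with the telescoping lemmas (Lemma~\ref{finall} and Lemma~\ref{finqeven}) applied to $f(k)=1/F_k$ after the substitution $q\mapsto 2q$, and then one passes to the limit $N\to\infty$. The only cosmetic difference is that the paper writes out the finite summation identities~\eqref{equ.u640lbl} and~\eqref{equ.bdnv59h} before taking the limit, whereas you invoke the ready-made infinite versions~\eqref{infall} and~\eqref{infallalt} directly.
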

In particular,
\begin{equation}
\sum_{k = 1}^\infty  {\frac{{F_{nk + nq} }}{{F_{nk} F_{nk + 2nq} }}}  = \frac{1}{{L_{nq} }}\sum_{k = 1}^{2q} {\frac{1}{{F_{nk} }}},\quad \mbox{$nq$ odd}\,.
\end{equation}
\begin{proof}
From identity~\eqref{equ.eo7cizi} and with $f(k)=1/F_k$ (and $q\to 2q$) in Lemma~\ref{finall} we have the finite summation identity
\begin{equation}\label{equ.u640lbl}
\begin{split}
L_{mnq} \sum_{k = 1}^N {\frac{{F_{nk + mnq} }}{{\prod_{j = 0}^m {F_{nk + 2jnq} } }}}  &= \sum_{k = 1}^{2q} {\frac{1}{{\prod_{j = 0}^{m - 1} {F_{nk + 2jnq} } }}}\qquad\qquad\qquad\mbox{($mnq$ odd)}\\
&\qquad  - \sum_{k = 1}^{2q} {\frac{1}{{\prod_{j = 0}^{m - 1} {F_{nk + nN + 2jnq} } }}}\,.
\end{split}
\end{equation}
From identity~\eqref{equ.eo7cizi} with $m$, $n$ and $q$ positive odd intergers and with $f(k)=1/F_k$ (and $q\to 2q$) in Lemma~\ref{finqeven} we have the alternating finite summation identity
\begin{equation}\label{equ.bdnv59h}
\begin{split}
L_{mnq} \sum_{k = 1}^N {\frac{{( - 1)^{k - 1} F_{nk + mnq} }}{{\prod_{j = 0}^m {F_{nk + 2jnq} } }}}  &= \sum_{k = 1}^{2q} {\frac{{( - 1)^{k - 1} }}{{\prod_{j = 0}^{m - 1} {F_{nk + 2jnq} } }}}\qquad\qquad\qquad\mbox{($mnq$ odd)}\\
&\qquad  + ( - 1)^{N - 1} \sum_{k = 1}^{2q} {\frac{{( - 1)^{k - 1} }}{{\prod_{j = 0}^{m - 1} {F_{nk + nN + 2jnq} } }}}
\end{split}
\end{equation}
Theorem~\ref{thm.t7s7spm} follows from identities~\eqref{equ.u640lbl} and~\eqref{equ.bdnv59h} in the limit as $N$ approaches infinity.
\end{proof}

\begin{thm}
If $m$, $n$ and $q$ are positive integers such that $mnq$ is even, then
\[
\sum_{k=1}^\infty \frac{(\pm1)^{k-1}L_{nk+mnq}}{\prod_{j=0}^m F_{nk+2jnq}}=\frac1{F_{mnq}}\sum_{k=1}^{2q}\frac{(\pm1)^{k-1}}{\prod_{j=0}^{m-1}F_{nk+2jnq}}\,.
\]
\end{thm}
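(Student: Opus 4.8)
The plan is to follow the proof of Theorem~\ref{thm.t7s7spm} almost verbatim, swapping the odd branch~\eqref{equ.eo7cizi} for the even branch~\eqref{equ.srgqhu2}. Since $mnq$ is even, identity~\eqref{equ.yb05ue2} with $v=mnq$, $u=nk+mnq$, divided through by $F_{nk}F_{nk+2mnq}$, gives
\[
\frac{F_{mnq}L_{nk+mnq}}{F_{nk}F_{nk+2mnq}}=\frac{1}{F_{nk}}-\frac{1}{F_{nk+2mnq}}\,,
\]
which is precisely identity~\eqref{equ.srgqhu2}. Multiplying both sides by $\prod_{j=1}^{m-1}1/F_{nk+2jnq}$, the left-hand side becomes $F_{mnq}L_{nk+mnq}$ divided by $\prod_{j=0}^{m}F_{nk+2jnq}$, while the right-hand side becomes $\bigl[f(nk)-f(nk+2mnq)\bigr]\prod_{j=1}^{m-1}f(nk+2jnq)$ with $f(k)=1/F_k$. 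Crucially the telescoping increment here is $2nq$ rather than $nq$, so Lemma~\ref{finall} and Lemma~\ref{finqeven} must be invoked with the parameter $q$ replaced by $2q$.

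For the non-alternating ($+$) version I would apply Lemma~\ref{finall} with $f(k)=1/F_k$ and $q\mapsto 2q$, obtaining the finite identity
\[
F_{mnq}\sum_{k=1}^N\frac{L_{nk+mnq}}{\prod_{j=0}^m F_{nk+2jnq}}=\sum_{k=1}^{2q}\frac{1}{\prod_{j=0}^{m-1}F_{nk+2jnq}}-\sum_{k=1}^{2q}\frac{1}{\prod_{j=0}^{m-1}F_{nk+nN+2jnq}}\,,
\]
and then let $N\to\infty$. Since $n\ge 1$, each factor $F_{nk+nN+2jnq}\to\infty$, so the final sum vanishes, and dividing by $F_{mnq}$ yields the claimed formula with the upper sign.

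For the alternating ($-$) version the key point is that after the substitution $q\mapsto 2q$ the relevant parity parameter is $2q$, which is even no matter the parity of $q$; hence Lemma~\ref{finqeven} applies directly (Lemma~\ref{finqodd} is never needed). With $f(k)=1/F_k$ and $q\mapsto 2q$ this gives
\[
F_{mnq}\sum_{k=1}^N\frac{(-1)^{k-1}L_{nk+mnq}}{\prod_{j=0}^m F_{nk+2jnq}}=\sum_{k=1}^{2q}\frac{(-1)^{k-1}}{\prod_{j=0}^{m-1}F_{nk+2jnq}}+(-1)^{N-1}\sum_{k=1}^{2q}\frac{(-1)^{k-1}}{\prod_{j=0}^{m-1}F_{nk+nN+2jnq}}\,,
\]
and letting $N\to\infty$ again annihilates the boundary term, giving the formula with the lower sign.

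There is no real obstacle; the argument is entirely parallel to Theorem~\ref{thm.t7s7spm}. The only three things to watch are: (i) use the even-$mnq$ branch~\eqref{equ.srgqhu2}, not the odd branch, so that the hypothesis ``$mnq$ even'' is exactly what is consumed; (ii) track the product indices carefully, so that $\frac{1}{F_{nk}F_{nk+2mnq}}\prod_{j=1}^{m-1}\frac{1}{F_{nk+2jnq}}=\prod_{j=0}^{m}\frac{1}{F_{nk+2jnq}}$, which is what forces $q\mapsto 2q$ in the telescoping lemmas; and (iii) observe that $2q$ is always even, so the alternating case is handled uniformly by Lemma~\ref{finqeven}.
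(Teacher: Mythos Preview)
Your proof is correct and follows exactly the paper's own approach: use the even-$mnq$ branch~\eqref{equ.srgqhu2}, apply Lemma~\ref{finall} (respectively Lemma~\ref{finqeven}) with $f(k)=1/F_k$ and $q\mapsto 2q$ to obtain the finite identities~\eqref{equ.medhcfp} and~\eqref{equ.pmefb69}, and let $N\to\infty$. Your observation that $2q$ is always even, so that Lemma~\ref{finqeven} handles the alternating case uniformly, is precisely the point the paper relies on.
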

In particular,
\begin{equation}
\sum_{k = 1}^\infty  {\frac{{L_{nk + nq} }}{{F_{nk} F_{nk + 2nq} }}}  = \frac{1}{{F_{nq} }}\sum_{k = 1}^{2q} {\frac{1}{{F_{nk} }}},\quad \mbox{$nq$ even}\,.
\end{equation}
\begin{proof}
As in Theorem~\ref{thm.t7s7spm}, with the identity~\eqref{equ.srgqhu2}, with $mnq$ even.

\bigskip

The corresponding finite summation identities are
\begin{equation}\label{equ.medhcfp}
\begin{split}
F_{mnq} \sum_{k = 1}^N {\frac{{L_{nk + mnq} }}{{\prod_{j = 0}^m {F_{nk + 2jnq} } }}}  &= \sum_{k = 1}^{2q} {\frac{1}{{\prod_{j = 0}^{m - 1} {F_{nk + 2jnq} } }}}\qquad\qquad\qquad\mbox{($mnq$ even)}\\
&\qquad  - \sum_{k = 1}^{2q} {\frac{1}{{\prod_{j = 0}^{m - 1} {F_{nk + nN + 2jnq} } }}}
\end{split}
\end{equation}
and
\begin{equation}\label{equ.pmefb69}
\begin{split}
F_{mnq} \sum_{k = 1}^N {\frac{{( - 1)^{k - 1} L_{nk + mnq} }}{{\prod_{j = 0}^m {F_{nk + 2jnq} } }}}  &= \sum_{k = 1}^{2q} {\frac{{( - 1)^{k - 1} }}{{\prod_{j = 0}^{m - 1} {F_{nk + 2jnq} } }}}\qquad\qquad\qquad\mbox{($mnq$ even)}\\
&\qquad  + ( - 1)^{N - 1} \sum_{k = 1}^{2q} {\frac{{( - 1)^{k - 1} }}{{\prod_{j = 0}^{m - 1} {F_{nk + nN + 2jnq} } }}}\,.
\end{split}
\end{equation}
\end{proof}

\subsection{Sums with $L_{nk}L_{nk+2nq}L_{nk+4nq}L_{nk+6nq}\cdots L_{nk+2mnq}$ in the denominator}
In this section we state the Lucas versions of the results given in section~\ref{sec.pgpoxnq}.

\bigskip

Here the basic identities (from identities~\eqref{equ.epvyp3u}) are:
\begin{equation}\label{equ.rgq8asw}
\frac{{L_{mnq} L_{nk + mnq} }}{{L_{nk} L_{nk + 2mnq} }} = \frac{1}{{L_{nk} }} - \frac{1}{{L_{nk + 2mnq} }},\quad\mbox{$mnq$ odd}
\end{equation}
\begin{equation}\label{equ.p4ho3jr}
\frac{{5F_{mnq} F_{nk + mnq} }}{{L_{nk} L_{nk + 2mnq} }} = \frac{1}{{L_{nk} }} - \frac{1}{{L_{nk + 2mnq} }},\quad\mbox{$mnq$ even}\,.
\end{equation}
\begin{thm}
If $m$, $n$ and $q$ are positive odd integers, then
\[
\sum_{k=1}^\infty \frac{(\pm1)^{k-1}L_{nk+mnq}}{\prod_{j=0}^m L_{nk+2jnq}}=\frac1{L_{mnq}}\sum_{k=1}^{2q}\frac{(\pm1)^{k-1}}{\prod_{j=0}^{m-1}L_{nk+2jnq}}\,.
\]
\end{thm}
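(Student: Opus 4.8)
The plan is to run the argument exactly parallel to the proof of Theorem~\ref{thm.t7s7spm}, with every Fibonacci number replaced by the corresponding Lucas number and with identity~\eqref{equ.rgq8asw} taking over the role played there by identity~\eqref{equ.eo7cizi}. Since $m$, $n$ and $q$ are all odd, the product $mnq$ is odd, so identity~\eqref{equ.rgq8asw} is available; for $f(k)=1/L_k$ it says precisely that
\[
f(nk)-f(nk+2mnq)=\frac1{L_{nk}}-\frac1{L_{nk+2mnq}}=\frac{L_{mnq}\,L_{nk+mnq}}{L_{nk}L_{nk+2mnq}}\,.
\]

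First I would apply Lemma~\ref{finall} with $f(k)=1/L_k$ and with the telescoping parameter changed from $q$ to $2q$, so the step size becomes $2nq$. By the displayed identity the general summand on the left-hand side of the lemma becomes
\[
\left[\frac1{L_{nk}}-\frac1{L_{nk+2mnq}}\right]\prod_{j=1}^{m-1}\frac1{L_{nk+2jnq}}=L_{mnq}\,\frac{L_{nk+mnq}}{\prod_{j=0}^m L_{nk+2jnq}}\,,
\]
the factor $1/L_{nk}$ supplying the $j=0$ term and $1/L_{nk+2mnq}$ the $j=m$ term of the product over the full range $j=0,\dots,m$. This produces the finite identity
\[
L_{mnq}\sum_{k=1}^N\frac{L_{nk+mnq}}{\prod_{j=0}^m L_{nk+2jnq}}=\sum_{k=1}^{2q}\frac1{\prod_{j=0}^{m-1}L_{nk+2jnq}}-\sum_{k=1}^{2q}\frac1{\prod_{j=0}^{m-1}L_{nk+nN+2jnq}}\,,
\]
the Lucas counterpart of equation~\eqref{equ.u640lbl}. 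For the lower-sign case I would repeat this verbatim with Lemma~\ref{finqeven} in place of Lemma~\ref{finall}; that lemma requires the telescoping parameter to be even, which is exactly why the substitution $q\to 2q$ is legitimate here, and it produces the Lucas counterpart of~\eqref{equ.bdnv59h}.

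Letting $N\to\infty$ then settles both cases simultaneously. In each finite identity the residual sum over $k=1,\dots,2q$ has a fixed number of terms, each a reciprocal of a product of Lucas numbers $L_{nk+nN+2jnq}$ that tend to infinity, so the residual sum tends to $0$; dividing through by $L_{mnq}$ leaves the claimed formula, with the upper sign coming from Lemma~\ref{finall} and the lower from Lemma~\ref{finqeven}. I expect no genuine obstacle: the only points needing care are the bookkeeping in extending the product from $\prod_{j=1}^{m-1}$ to $\prod_{j=0}^m$ and the checking of the two parity hypotheses—$mnq$ odd, needed for~\eqref{equ.rgq8asw}, and $2q$ even, needed for Lemma~\ref{finqeven}—both immediate from the hypothesis that $m$, $n$ and $q$ are odd.
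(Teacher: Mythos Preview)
Your proposal is correct and follows essentially the same route as the paper: the paper likewise invokes identity~\eqref{equ.rgq8asw} together with Lemma~\ref{finall} and Lemma~\ref{finqeven} (with $q\to 2q$) to obtain the finite summation identities~\eqref{equ.r78m8j9} and~\eqref{equ.ans99dd}, and then passes to the limit $N\to\infty$.
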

In particular,
\begin{equation}
\sum_{k = 1}^\infty  {\frac{{L_{nk + nq} }}{{L_{nk} L_{nk + 2nq} }}}  = \frac{1}{{L_{nq} }}\sum_{k = 1}^{2q} {\frac{1}{{L_{nk} }}},\quad \mbox{$nq$ odd}\,.
\end{equation}

\begin{thm}
If $m$, $n$ and $q$ are positive integers such that $mnq$ is even, then
\[
\sum_{k=1}^\infty \frac{(\pm1)^{k-1}F_{nk+mnq}}{\prod_{j=0}^m L_{nk+2jnq}}=\frac1{5F_{mnq}}\sum_{k=1}^{2q}\frac{(\pm1)^{k-1}}{\prod_{j=0}^{m-1}L_{nk+2jnq}}\,.
\]
\end{thm}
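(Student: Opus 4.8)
The plan is to run the same telescoping argument that proved Theorem~\ref{thm.t7s7spm} and the $F$-denominator theorem built on~\eqref{equ.srgqhu2}, but now starting from the basic identity~\eqref{equ.p4ho3jr}. The hypothesis ``$mnq$ even'' is exactly what turns~\eqref{equ.p4ho3jr} into a genuine \emph{difference} $1/L_{nk}-1/L_{nk+2mnq}$ (rather than the sum one would get for $mnq$ odd), so the relevant engines are Lemma~\ref{finall} and Lemma~\ref{finqeven}, not Lemma~\ref{finqodd}.

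First I would take $f(k)=1/L_k$ in the telescoping product identity~\eqref{equ.tejlmvh}, with $q$ replaced throughout by $2q$ so that the product advances in steps of $2nq$. The bracketed factor $f(nk)-f(nk+2mnq)$ becomes $1/L_{nk}-1/L_{nk+2mnq}$, which by~\eqref{equ.p4ho3jr} equals $5F_{mnq}F_{nk+mnq}/(L_{nk}L_{nk+2mnq})$. Multiplying by the surviving product $\prod_{j=1}^{m-1}1/L_{nk+2jnq}$ collapses the left-hand side to $5F_{mnq}F_{nk+mnq}/\prod_{j=0}^{m}L_{nk+2jnq}$, i.e.\ to $5F_{mnq}$ times the summand of the theorem. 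Summing over $k$ via Lemma~\ref{finall} (with $m$ unchanged and $q\to 2q$) then gives the finite identity
\[
5F_{mnq}\sum_{k=1}^{N}\frac{F_{nk+mnq}}{\prod_{j=0}^{m}L_{nk+2jnq}}=\sum_{k=1}^{2q}\frac{1}{\prod_{j=0}^{m-1}L_{nk+2jnq}}-\sum_{k=1}^{2q}\frac{1}{\prod_{j=0}^{m-1}L_{nk+nN+2jnq}}.
\]
Letting $N\to\infty$ annihilates the last sum, since $1/L_{nN}\to 0$ (equivalently $f_\infty=0$ in~\eqref{infall}); dividing by $5F_{mnq}$ yields the stated formula with the upper sign. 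For the lower (alternating) sign I would instead feed the same $f(k)=1/L_k$ with $q\to 2q$ into Lemma~\ref{finqeven} — legitimately, because $2q$ is even regardless of the parity of $q$ — obtaining the analogous finite identity whose boundary term carries a factor $1/L_{nN}$ and hence vanishes as $N\to\infty$, producing the minus-sign version.

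There is essentially no hard step: the argument is a routine transcription of the earlier proofs. The only points deserving care are (i) checking that ``$mnq$ even'' is precisely the parity condition making~\eqref{equ.p4ho3jr} telescopable as a difference, so that Lemma~\ref{finall}/Lemma~\ref{finqeven} apply; (ii) recording that the substitution $q\to 2q$ is what produces both the step $2nq$ in the denominator product and the outer summation limit $2q$ on the right; and (iii) verifying the limit hypotheses of the infinite-sum versions of the lemmas, namely $1/L_{nN}\to0$ and that $1/L_{2Nn}$, $1/L_{(2N-1)n}$ share the common limit $0$ — both immediate from Binet's formula.
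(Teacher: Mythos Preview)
Your proposal is correct and matches the paper's own argument essentially verbatim: the paper derives the theorem from the basic identity~\eqref{equ.p4ho3jr} with $f(k)=1/L_k$ and $q\to 2q$ in Lemma~\ref{finall} and Lemma~\ref{finqeven}, recording precisely the finite summation identities~\eqref{equ.en25r4r} and~\eqref{equ.qa1qok6} that you wrote down, and then lets $N\to\infty$.
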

In particular,
\begin{equation}
\sum_{k = 1}^\infty  {\frac{{F_{nk + nq} }}{{L_{nk} L_{nk + 2nq} }}}  = \frac{1}{{5F_{nq} }}\sum_{k = 1}^{2q} {\frac{1}{{F_{nk} }}},\quad \mbox{$nq$ even}\,.
\end{equation}
We have the following finite summation identities:
\begin{equation}\label{equ.r78m8j9}
\begin{split}
L_{mnq} \sum_{k = 1}^N {\frac{{L_{nk + mnq} }}{{\prod_{j = 0}^m {L_{nk + 2jnq} } }}}  &= \sum_{k = 1}^{2q} {\frac{1}{{\prod_{j = 0}^{m - 1} {L_{nk + 2jnq} } }}}\qquad\qquad\qquad\mbox{($mnq$ odd)}\\
&\qquad  - \sum_{k = 1}^{2q} {\frac{1}{{\prod_{j = 0}^{m - 1} {L_{nk + nN + 2jnq} } }}}\,.
\end{split}
\end{equation}
\begin{equation}\label{equ.ans99dd}
\begin{split}
L_{mnq} \sum_{k = 1}^N {\frac{{( - 1)^{k - 1} L_{nk + mnq} }}{{\prod_{j = 0}^m {L_{nk + 2jnq} } }}}  &= \sum_{k = 1}^{2q} {\frac{{( - 1)^{k - 1} }}{{\prod_{j = 0}^{m - 1} {L_{nk + 2jnq} } }}}\qquad\qquad\qquad\mbox{($mnq$ odd)}\\
&\qquad  + ( - 1)^{N - 1} \sum_{k = 1}^{2q} {\frac{{( - 1)^{k - 1} }}{{\prod_{j = 0}^{m - 1} {L_{nk + nN + 2jnq} } }}}\,.
\end{split}
\end{equation}
\begin{equation}\label{equ.en25r4r}
\begin{split}
5F_{mnq} \sum_{k = 1}^N {\frac{{F_{nk + mnq} }}{{\prod_{j = 0}^m {L_{nk + 2jnq} } }}}  &= \sum_{k = 1}^{2q} {\frac{1}{{\prod_{j = 0}^{m - 1} {L_{nk + 2jnq} } }}}\qquad\qquad\qquad\mbox{($mnq$ even)}\\
&\qquad  - \sum_{k = 1}^{2q} {\frac{1}{{\prod_{j = 0}^{m - 1} {L_{nk + nN + 2jnq} } }}}\,.
\end{split}
\end{equation}
\begin{equation}\label{equ.qa1qok6}
\begin{split}
5F_{mnq} \sum_{k = 1}^N {\frac{{( - 1)^{k - 1} F_{nk + mnq} }}{{\prod_{j = 0}^m {L_{nk + 2jnq} } }}}  &= \sum_{k = 1}^{2q} {\frac{{( - 1)^{k - 1} }}{{\prod_{j = 0}^{m - 1} {L_{nk + 2jnq} } }}}\qquad\qquad\qquad\mbox{($mnq$ even)}\\
&\qquad  + ( - 1)^{N - 1} \sum_{k = 1}^{2q} {\frac{{( - 1)^{k - 1} }}{{\prod_{j = 0}^{m - 1} {L_{nk + nN + 2jnq} } }}}\,.
\end{split}
\end{equation}

\subsection{Sums with $F_{2nk}F_{2nk+2nq}F_{2nk+4nq}F_{2nk+6nq}\cdots F_{2nk+2mnq}$ in the denominator}\label{sec.ffn2hfi}
\begin{thm}\label{thm.yj680l9}
If $m$, $n$ and $q$ are positive \underline{odd} integers, then
\[
\sum_{k = 1}^\infty  {\frac{{F_{2nk + mnq} }}{{\prod_{j = 0}^m {F_{2nk + 2jnq} } }}}  = \frac{1}{{L_{mnq} }}\sum_{k = 1}^q {\frac{1}{{\prod_{j = 0}^{m - 1} {F_{2nk + 2jnq} } }}}\,. 
\]

\end{thm}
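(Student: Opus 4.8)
The plan is to mimic the proof of Theorem~\ref{thm.t7s7spm}, building the telescoping on the index $2nk$ instead of $nk$, so that a shift $k\mapsto k+q$ advances the denominator by exactly one block $2nq$ and the telescoping period is $q$ (rather than $2q$). First I would record the partial-fraction identity underlying this section. Setting $v=mnq$ and $u=2nk+mnq$ in identity~\eqref{equ.mzt8c69} and dividing through by $F_{2nk}F_{2nk+2mnq}$ gives
\[
\frac{L_{mnq}F_{2nk+mnq}}{F_{2nk}F_{2nk+2mnq}}=\frac1{F_{2nk}}+\frac{(-1)^{mnq}}{F_{2nk+2mnq}};
\]
since $m$, $n$ and $q$ are all odd, $mnq$ is odd, so $(-1)^{mnq}=-1$ and the right-hand side equals $1/F_{2nk}-1/F_{2nk+2mnq}$ (this is the analog of~\eqref{equ.eo7cizi} with $nk$ replaced by $2nk$).

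Next I would apply Lemma~\ref{finall} with the sequence $f(k)=1/F_{2k}$, so that $f(nk)=1/F_{2nk}$, $f(nk+jnq)=1/F_{2nk+2jnq}$ and $f(nk+mnq)=1/F_{2nk+2mnq}$. By the identity just displayed, the bracket $f(nk)-f(nk+mnq)$ is exactly $L_{mnq}F_{2nk+mnq}/(F_{2nk}F_{2nk+2mnq})$, so the summand on the left of Lemma~\ref{finall} collapses to $\frac{L_{mnq}F_{2nk+mnq}}{\prod_{j=0}^m F_{2nk+2jnq}}$, and the lemma yields the finite summation identity
\[
L_{mnq}\sum_{k=1}^N\frac{F_{2nk+mnq}}{\prod_{j=0}^m F_{2nk+2jnq}}=\sum_{k=1}^q\frac1{\prod_{j=0}^{m-1}F_{2nk+2jnq}}-\sum_{k=1}^q\frac1{\prod_{j=0}^{m-1}F_{2nk+2nN+2jnq}}.
\]

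Finally I would let $N\to\infty$: since $n\ge1$, each $F_{2nk+2nN+2jnq}\to\infty$, so the last sum vanishes in the limit (equivalently one invokes the infinite version~\eqref{infall} with $f_\infty=\lim_{N\to\infty}1/F_{2nN}=0$, which kills the $f_\infty{}^m q$ term), and dividing by $L_{mnq}$ gives the stated formula. I do not anticipate any genuine obstacle here; the only points meriting attention are that the hypothesis "$m$, $n$, $q$ odd" is used precisely to force $(-1)^{mnq}=-1$ in the partial-fraction identity, and that the tail sum tends to zero, both of which are immediate.
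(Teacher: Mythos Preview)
Your proof is correct and is essentially identical to the paper's own argument: the paper also derives the identity $\frac{L_{mnq}F_{2nk+mnq}}{F_{2nk}F_{2nk+2mnq}}=\frac1{F_{2nk}}-\frac1{F_{2nk+2mnq}}$ from~\eqref{equ.mzt8c69} with $v=mnq$, $u=2nk+mnq$ (this is~\eqref{equ.cx6zf5s}), then applies Lemma~\ref{finall} with $f(k)=1/F_{2k}$ to obtain the same finite summation identity you wrote down, and lets $N\to\infty$.
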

In particular,
\begin{equation}\label{equ.akqtcqn}
\sum_{k = 1}^\infty  {\frac{{F_{2nk + nq} }}{{F_{2nk} F_{2nk + 2nq} }}}  = \frac{1}{{L_{nq} }}\sum_{k = 1}^q {\frac{1}{{F_{2nk} }}}\,,
\end{equation}
\begin{equation}
\sum_{k = 1}^\infty  {\frac{{F_{2nk + 3nq} }}{{F_{2nk} F_{2nk + 2nq} F_{2nk + 4nq} F_{2nk + 6nq} }}}  = \frac{1}{{L_{3nq} }}\sum_{k = 1}^q {\frac{1}{{F_{2nk} F_{2nk + 2nq} F_{2nk + 4nq} }}}\,.
\end{equation}
Identity~(18) of reference~\cite{frontczak16} with $n=0$ in their notation corresponds to setting $q=1$ in identity~\eqref{equ.akqtcqn}.
\begin{proof}
From identity~\eqref{equ.mzt8c69} with $v=mnq$ and $u=2nk+mnq$ comes the identity
\begin{equation}\label{equ.cx6zf5s}
\frac{{L_{mnq} F_{2nk + mnq} }}{{F_{2nk} F_{2nk + 2mnq} }} = \frac{1}{{F_{2nk} }} - \frac{1}{{F_{2nk + 2mnq} }},\quad\mbox{$mnq$ odd}\,.
\end{equation}
From identity~\eqref{equ.cx6zf5s} and Lemma~\ref{finall} with $f(k)=1/F_{2k}$ we have the finite summation identity:
\begin{equation}
\begin{split}
L_{mnq} \sum_{k = 1}^N {\frac{{F_{2nk + mnq} }}{{\prod_{j = 0}^m {F_{2nk + 2jnq} } }}}  &= \sum_{k = 1}^q {\frac{1}{{\prod_{j = 0}^{m - 1} {F_{2nk + 2jnq} } }}}\\
&\qquad  - \sum_{k = 1}^q {\frac{1}{{\prod_{j = 0}^{m - 1} {F_{2nk + 2nN + 2jnq} } }}}\,,
\end{split}
\end{equation}
from which Theorem~\ref{thm.yj680l9} follows in the limit as $N$ approaches infinity.
\end{proof}
\begin{thm}\label{thm.jarxg8t}
If $m$, $n$ and $q$ are positive integers such that $q$ is odd and $mn$ is even , then
\[
\sum_{k = 1}^\infty  {\frac{(-1)^{k-1}F_{2nk + mnq} }{{\prod_{j = 0}^m {F_{2nk + 2jnq} } }}}  = \frac{1}{{L_{mnq} }}\sum_{k = 1}^q {\frac{(-1)^{k-1}}{{\prod_{j = 0}^{m - 1} {F_{2nk + 2jnq} } }}}\,. 
\]
\end{thm}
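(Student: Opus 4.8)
The plan is to mirror the proof of Theorem~\ref{thm.yj680l9}, replacing the plain telescoping step by the alternating one and using the opposite parity of $mnq$. Since $mn$ is even, $mnq$ is even and $(-1)^{mnq}=1$, so identity~\eqref{equ.mzt8c69} with $v=mnq$ and $u=2nk+mnq$, divided through by $F_{2nk}F_{2nk+2mnq}$, gives the companion of~\eqref{equ.cx6zf5s},
\[
\frac{L_{mnq}F_{2nk+mnq}}{F_{2nk}F_{2nk+2mnq}}=\frac{1}{F_{2nk}}+\frac{1}{F_{2nk+2mnq}}\,.
\]

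First I would multiply this identity through by $\prod_{j=1}^{m-1}1/F_{2nk+2jnq}$. Writing $f(k)=1/F_{2k}$, the left side becomes $L_{mnq}F_{2nk+mnq}/\prod_{j=0}^{m}F_{2nk+2jnq}$, while the right side becomes exactly $\bigl[f(nk)+f(nk+mnq)\bigr]\prod_{j=1}^{m-1}f(nk+jnq)$. Because $q$ is odd, Lemma~\ref{finqodd} applies with this $f$, and it produces the finite summation identity
\[
\begin{aligned}
L_{mnq}\sum_{k=1}^{N}\frac{(-1)^{k-1}F_{2nk+mnq}}{\prod_{j=0}^{m}F_{2nk+2jnq}}
&=\sum_{k=1}^{q}\frac{(-1)^{k-1}}{\prod_{j=0}^{m-1}F_{2nk+2jnq}}\\
&\quad+(-1)^{N-1}\sum_{k=1}^{q}\frac{(-1)^{k-1}}{\prod_{j=0}^{m-1}F_{2nk+2nN+2jnq}}\,.
\end{aligned}
\]

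Finally I would pass to the limit $N\to\infty$. The boundary term is a fixed sum of $q$ summands whose denominators $F_{2nk+2nN+2jnq}$ each tend to infinity (the index coefficient $2n$ being at least $2$), so it vanishes; dividing by $L_{mnq}$ then yields the asserted closed form. The only delicate point is the parity bookkeeping — verifying that the hypotheses $q$ odd and $mn$ even are precisely what is needed to land on the ``$+$'' form of the basic identity and hence on Lemma~\ref{finqodd} rather than Lemma~\ref{finqeven} — together with the (immediate) decay of the residual boundary sum; I anticipate no substantive obstacle beyond this.
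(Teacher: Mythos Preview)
Your proposal is correct and follows essentially the same route as the paper: derive identity~\eqref{equ.dfe2pci} from~\eqref{equ.mzt8c69} with $v=mnq$, $u=2nk+mnq$ (using $mnq$ even since $mn$ is even), apply Lemma~\ref{finqodd} with $f(k)=1/F_{2k}$ (legitimate because $q$ is odd), and let $N\to\infty$ to kill the boundary term. The finite identity you display is exactly the one the paper records before taking the limit.
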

In particular,
\begin{equation}\label{equ.dn22fd6}
\sum_{k = 1}^\infty  {\frac{{(-1)^{k-1}F_{2nk + nq} }}{{F_{2nk} F_{2nk + 2nq} }}}  = \frac{1}{{L_{nq} }}\sum_{k = 1}^q {\frac{(-1)^{k-1}}{{F_{2nk} }}}\,.
\end{equation}
\begin{proof}
From identity~\eqref{equ.mzt8c69} with $v=mnq$ and $u=2nk+mnq$ comes the identity
\begin{equation}\label{equ.dfe2pci}
\frac{{L_{mnq} F_{2nk + mnq} }}{{F_{2nk} F_{2nk + 2mnq} }} = \frac{1}{{F_{2nk} }} + \frac{1}{{F_{2nk + 2mnq} }},\quad\mbox{$mnq$ even}\,.
\end{equation}
From identity~\eqref{equ.dfe2pci} and Lemma~\ref{finqodd} with $f(k)=1/F_{2k}$ we have the finite summation identity:
\begin{equation}
\begin{split}
L_{mnq} \sum_{k = 1}^N {\frac{{(-1)^{k-1}F_{2nk + mnq} }}{{\prod_{j = 0}^m {F_{2nk + 2jnq} } }}}  &= \sum_{k = 1}^q {\frac{(-1)^{k-1}}{{\prod_{j = 0}^{m - 1} {F_{2nk + 2jnq} } }}}\\
&\qquad  +(-1)^{N-1} \sum_{k = 1}^q {\frac{(-1)^{k-1}}{{\prod_{j = 0}^{m - 1} {F_{2nk + 2nN + 2jnq} } }}}\,,
\end{split}
\end{equation}
from which Theorem~\ref{thm.jarxg8t} follows in the limit as $N$ approaches infinity.
\end{proof}
\begin{thm}\label{thm.un8i8oz}
If $m$, $n$ and $q$ are positive integers such that $mnq$ is even, then
\[
\sum_{k = 1}^\infty  {\frac{{L_{2nk + mnq} }}{{\prod_{j = 0}^m {F_{2nk + 2jnq} } }}}  = \frac{1}{{F_{mnq} }}\sum_{k = 1}^q {\frac{1}{{\prod_{j = 0}^{m - 1} {F_{2nk + 2jnq} } }}}\,. 
\]

\end{thm}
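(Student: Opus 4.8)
The plan is to mirror the proof of Theorem~\ref{thm.yj680l9}, replacing the role of identity~\eqref{equ.mzt8c69} by identity~\eqref{equ.yb05ue2} so that a Lucas number appears in the numerator of the summand instead of a Fibonacci number. First I would manufacture the required two-term identity: dividing $F_vL_u=F_{u+v}-(-1)^vF_{u-v}$ through by $F_{u-v}F_{u+v}$ gives
\[
\frac{F_vL_u}{F_{u-v}F_{u+v}}=\frac{1}{F_{u-v}}-\frac{(-1)^v}{F_{u+v}},
\]
and then setting $v=mnq$ and $u=2nk+mnq$, so that $u-v=2nk$ and $u+v=2nk+2mnq$, and invoking the hypothesis that $mnq$ is even (hence $(-1)^{mnq}=1$), yields
\[
\frac{F_{mnq}\,L_{2nk+mnq}}{F_{2nk}\,F_{2nk+2mnq}}=\frac{1}{F_{2nk}}-\frac{1}{F_{2nk+2mnq}}.
\]

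Next I would apply Lemma~\ref{finall}, in the limiting form~\eqref{infall}, with the sequence $f(k)=1/F_{2k}$. With this choice $f(nk)=1/F_{2nk}$, $f(nk+jnq)=1/F_{2nk+2jnq}$ and $f(nk+mnq)=1/F_{2nk+2mnq}$, so the right-hand side of the displayed two-term identity is exactly $f(nk)-f(nk+mnq)$; moreover $f(nN)=1/F_{2nN}\to0$, so $f_\infty=0$ and the term $f_\infty^{m}q$ in~\eqref{infall} vanishes (using $m\ge1$). Thus~\eqref{infall} reads
\[
\sum_{k=1}^\infty\left\{\left[\frac{1}{F_{2nk}}-\frac{1}{F_{2nk+2mnq}}\right]\prod_{j=1}^{m-1}\frac{1}{F_{2nk+2jnq}}\right\}=\sum_{k=1}^q\prod_{j=0}^{m-1}\frac{1}{F_{2nk+2jnq}}.
\]
Substituting the two-term identity into the bracket collapses the left-hand side: the factor $F_{mnq}$ comes out in front, and the denominators combine — $F_{2nk}$ is the $j=0$ factor, $F_{2nk+2mnq}$ the $j=m$ factor, and $\prod_{j=1}^{m-1}F_{2nk+2jnq}$ supplies the rest — into $\prod_{j=0}^mF_{2nk+2jnq}$. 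Hence the left-hand side equals $F_{mnq}\sum_{k=1}^\infty L_{2nk+mnq}/\prod_{j=0}^mF_{2nk+2jnq}$, and dividing through by $F_{mnq}$ (a positive integer, since $mnq\ge2$, hence nonzero) gives the stated formula. If one instead prefers the finite-$N$ route, one writes the analog of the finite summation identity used in the proof of Theorem~\ref{thm.yj680l9} (now from~\eqref{equ.yb05ue2}) and lets $N\to\infty$, the tail sum $\sum_{k=1}^{q}\prod_{j=0}^{m-1}1/F_{2nk+2nN+2jnq}$ tending to $0$.

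There is no real obstacle here; the content is entirely bookkeeping, and the only points requiring a moment's attention are (i) choosing $f(k)=1/F_{2k}$ rather than $f(k)=1/F_k$, so that the arguments $nk$, $nk+jnq$, $nk+mnq$ in Lemma~\ref{finall} line up with the indices $2nk$, $2nk+2jnq$, $2nk+2mnq$ appearing in the theorem, and (ii) noting that the assumption ``$mnq$ even'' is exactly what converts the $(-1)^{mnq}$ in~\eqref{equ.yb05ue2} into a genuine difference (cf.\ identity~\eqref{equ.srgqhu2}), so that the plain telescoping identity~\eqref{infall} — rather than its alternating counterpart from Lemma~\ref{finqodd} — is the relevant one.
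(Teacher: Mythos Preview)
Your proof is correct and follows essentially the same route as the paper: derive identity~\eqref{equ.cagwzby} from~\eqref{equ.yb05ue2} with $v=mnq$, $u=2nk+mnq$ (using $mnq$ even), then apply Lemma~\ref{finall} with $f(k)=1/F_{2k}$ and let $N\to\infty$. The only cosmetic difference is that you invoke the limiting form~\eqref{infall} directly, whereas the paper writes out the finite-$N$ identity first and then passes to the limit --- a route you also mention.
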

In particular,
\begin{equation}\label{equ.ci55vvi}
\sum_{k = 1}^\infty  {\frac{{L_{2nk + nq} }}{{F_{2nk} F_{2nk + 2nq} }}}  = \frac{1}{{F_{nq} }}\sum_{k = 1}^q {\frac{1}{{F_{2nk} }}}\,,
\end{equation}
\begin{equation}
\sum_{k = 1}^\infty  {\frac{{L_{2nk + 3nq} }}{{F_{2nk} F_{2nk + 2nq} F_{2nk + 4nq} F_{2nk + 6nq} }}}  = \frac{1}{{F_{3nq} }}\sum_{k = 1}^q {\frac{1}{{F_{2nk} F_{2nk + 2nq} F_{2nk + 4nq} }}}\,.
\end{equation}
\begin{proof}
From identity~\eqref{equ.yb05ue2} with $v=mnq$ and $u=2nk+mnq$ comes the identity
\begin{equation}\label{equ.cagwzby}
\frac{{F_{mnq} L_{2nk + mnq} }}{{F_{2nk} F_{2nk + 2mnq} }} = \frac{1}{{F_{2nk} }} - \frac{1}{{F_{2nk + 2mnq} }},\quad\mbox{$mnq$ even}\,.
\end{equation}
From identity~\eqref{equ.cagwzby} and Lemma~\ref{finall} with $f(k)=1/F_{2k}$ we have the finite summation identity:
\begin{equation}
\begin{split}
F_{mnq} \sum_{k = 1}^N {\frac{{L_{2nk + mnq} }}{{\prod_{j = 0}^m {F_{2nk + 2jnq} } }}}  &= \sum_{k = 1}^q {\frac{1}{{\prod_{j = 0}^{m - 1} {F_{2nk + 2jnq} } }}}\\
&\qquad  - \sum_{k = 1}^q {\frac{1}{{\prod_{j = 0}^{m - 1} {F_{2nk + 2nN + 2jnq} } }}}\,,
\end{split}
\end{equation}
from which Theorem~\ref{thm.un8i8oz} follows in the limit as $N$ approaches infinity.
\end{proof}
\begin{thm}\label{thm.m05lv0w}
If $m$, $n$ and $q$ are positive integers such that $q$ is even \underline{or} $mnq$ is odd , then
\[
\sum_{k = 1}^\infty  {\frac{(-1)^{k-1}L_{2nk + mnq} }{{\prod_{j = 0}^m {F_{2nk + 2jnq} } }}}  = \frac{1}{{F_{mnq} }}\sum_{k = 1}^q {\frac{(-1)^{k-1}}{{\prod_{j = 0}^{m - 1} {F_{2nk + 2jnq} } }}}\,. 
\]
\end{thm}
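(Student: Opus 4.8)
The plan is to follow the template of the proof of Theorem~\ref{thm.un8i8oz}, but with the alternating telescoping Lemmas~\ref{finqeven} and~\ref{finqodd} in place of Lemma~\ref{finall}. First I would record the base identity: dividing $F_v L_u = F_{u+v} - (-1)^v F_{u-v}$ (identity~\eqref{equ.yb05ue2}) by $F_{u-v}F_{u+v}$ and putting $v=mnq$, $u=2nk+mnq$ gives
\[
\frac{F_{mnq} L_{2nk+mnq}}{F_{2nk} F_{2nk+2mnq}} = \frac{1}{F_{2nk}} - \frac{(-1)^{mnq}}{F_{2nk+2mnq}},
\]
which is identity~\eqref{equ.cagwzby} when $mnq$ is even and its plus-sign companion when $mnq$ is odd. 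Multiplying through by $\prod_{j=1}^{m-1}1/F_{2nk+2jnq}$ rewrites the left-hand side as $F_{mnq}L_{2nk+mnq}/\prod_{j=0}^{m}F_{2nk+2jnq}$ and the right-hand side as $\bigl[f(nk)\mp f(nk+mnq)\bigr]\prod_{j=1}^{m-1}f(nk+jnq)$ with $f(k)=1/F_{2k}$, where the upper (minus) sign goes with $mnq$ even.

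Next I would split on the hypothesis. If $q$ is even then $q\mid mnq$ forces $mnq$ even, so the minus-sign form holds and Lemma~\ref{finqeven} applies with $f(k)=1/F_{2k}$; if instead $mnq$ is odd then $q$ is odd, the plus-sign form holds, and Lemma~\ref{finqodd} applies. In either case one obtains the finite summation identity
\[
F_{mnq}\sum_{k=1}^{N}\frac{(-1)^{k-1}L_{2nk+mnq}}{\prod_{j=0}^{m}F_{2nk+2jnq}} = \sum_{k=1}^{q}\frac{(-1)^{k-1}}{\prod_{j=0}^{m-1}F_{2nk+2jnq}} + (-1)^{N-1}\sum_{k=1}^{q}\frac{(-1)^{k-1}}{\prod_{j=0}^{m-1}F_{2nk+2nN+2jnq}}.
\]

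Finally I would let $N\to\infty$: each factor $F_{2nk+2nN+2jnq}$ tends to infinity, so the product $\prod_{j=0}^{m-1}1/F_{2nk+2nN+2jnq}\to 0$, and since the $k$-sum has only $q$ terms while $(-1)^{N-1}$ is bounded, the boundary term vanishes, leaving the asserted identity after dividing by $F_{mnq}$. Equivalently, one may skip the finite identity and invoke~\eqref{infallalt} directly with $f(k)=1/F_{2k}$: both $f(2Nn)=1/F_{4Nn}$ and $f((2N-1)n)=1/F_{(4N-2)n}$ tend to $0$, so the hypothesis of~\eqref{infallalt} holds, and its sign convention (minus for $q$ even, plus for $q$ odd) matches the sign forced by the parity of $mnq$. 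I do not expect any real difficulty here; the only point requiring attention is the bookkeeping of parities, namely that ``$q$ even'' already entails ``$mnq$ even'' (so the minus-sign telescoping is available) and ``$mnq$ odd'' entails ``$q$ odd'' (so the plus-sign telescoping is available), while the leftover combination $q$ odd with $mnq$ even is genuinely excluded because there neither alternating telescoping identity applies.
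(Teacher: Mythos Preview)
Your proof is correct and follows essentially the same route as the paper: derive the base identity from~\eqref{equ.yb05ue2}, apply the alternating telescoping Lemmas~\ref{finqeven} or~\ref{finqodd} with $f(k)=1/F_{2k}$ according to the parity of $q$, obtain the same finite summation identity the paper records, and pass to the limit. Your parity bookkeeping is accurate, including the observation that the excluded case ($q$ odd with $mnq$ even) is precisely where neither alternating lemma matches the sign of the base identity.
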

In particular,
\begin{equation}\label{equ.rdxyneo}
\sum_{k = 1}^\infty  {\frac{{(-1)^{k-1}L_{2nk + nq} }}{{F_{2nk} F_{2nk + 2nq} }}}  = \frac{1}{{F_{nq} }}\sum_{k = 1}^q {\frac{(-1)^{k-1}}{{F_{2nk} }}}\,,\quad\mbox{$q$ even \underline{or} $nq$ odd}\,.
\end{equation}
The alternating summation identity here, valid for $q$ even or $mnq$ odd, is
\begin{equation}
\begin{split}
F_{mnq} \sum_{k = 1}^N {\frac{{(-1)^{k-1}L_{2nk + mnq} }}{{\prod_{j = 0}^m {F_{2nk + 2jnq} } }}}  &= \sum_{k = 1}^q {\frac{(-1)^{k-1}}{{\prod_{j = 0}^{m - 1} {F_{2nk + 2jnq} } }}}\\
&\qquad  +(-1)^{N-1} \sum_{k = 1}^q {\frac{(-1)^{k-1}}{{\prod_{j = 0}^{m - 1} {F_{2nk + 2nN + 2jnq} } }}}\,.
\end{split}
\end{equation}
\subsection{Sums with $L_{2nk}L_{2nk+2nq}L_{2nk+4nq}L_{2nk+6nq}\cdots L_{2nk+2mnq}$ in the denominator}\label{sec.esv6wcx}
The results in this section are derived from identities~\eqref{equ.epvyp3u}. The proofs are identical to those in section~\ref{sec.ffn2hfi} and are therefore omitted.
\begin{thm}
If $m$, $n$ and $q$ are positive \underline{odd} integers, then
\[
\sum_{k = 1}^\infty  {\frac{{L_{2nk + mnq} }}{{\prod_{j = 0}^m {L_{2nk + 2jnq} } }}}  = \frac{1}{{L_{mnq} }}\sum_{k = 1}^q {\frac{1}{{\prod_{j = 0}^{m - 1} {L_{2nk + 2jnq} } }}}\,. 
\]
\end{thm}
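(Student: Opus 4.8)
The plan is to imitate the proof of Theorem~\ref{thm.yj680l9} verbatim, with the Fibonacci product identity~\eqref{equ.mzt8c69} replaced throughout by its Lucas analogue~\eqref{equ.ztsb3uk} from the block~\eqref{equ.epvyp3u}. First I would take the identity $L_vL_u = L_{u+v} + (-1)^vL_{u-v}$ and specialise to $v = mnq$, $u = 2nk + mnq$. Because $m$, $n$ and $q$ are all odd, $mnq$ is odd, hence $(-1)^{mnq} = -1$, and after dividing by $L_{2nk}L_{2nk+2mnq}$ one obtains
\[
\frac{L_{mnq}L_{2nk+mnq}}{L_{2nk}L_{2nk+2mnq}} = \frac{1}{L_{2nk}} - \frac{1}{L_{2nk+2mnq}}\,,
\]
which is simply identity~\eqref{equ.rgq8asw} with $nk$ replaced by $2nk$.

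Next I would apply Lemma~\ref{finall} with $f(k) = 1/L_{2k}$. Then $f(nk) - f(nk+mnq) = 1/L_{2nk} - 1/L_{2nk+2mnq}$ is precisely the right-hand side of the displayed identity, while $\prod_{j=1}^{m-1} f(nk+jnq) = \prod_{j=1}^{m-1} 1/L_{2nk+2jnq}$. Multiplying these together, the left-hand side of Lemma~\ref{finall} becomes $L_{mnq}\sum_{k=1}^N L_{2nk+mnq}\big/\prod_{j=0}^m L_{2nk+2jnq}$, and its right-hand side becomes $\sum_{k=1}^q \prod_{j=0}^{m-1} 1/L_{2nk+2jnq} - \sum_{k=1}^q \prod_{j=0}^{m-1} 1/L_{2nk+2nN+2jnq}$.

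Finally I would pass to the limit $N\to\infty$. Since $L_{2N}\to\infty$, the sequence $f(k)=1/L_{2k}$ is convergent with limiting value $f_\infty = 0$, so by~\eqref{infall} the trailing sum vanishes (the term $f_\infty{}^m q$ is $0$). Dividing through by $L_{mnq}$ produces the asserted formula. There is no essential difficulty here; the only points needing care are the index bookkeeping — tracking the factor $2$ introduced by the choice $f(k)=1/L_{2k}$ so that the shifted arguments appearing in Lemma~\ref{finall} land exactly on $2nk + 2jnq$ — and the observation that the hypothesis "$m$, $n$, $q$ odd" is precisely what is required to turn~\eqref{equ.ztsb3uk} into a \emph{difference} of reciprocals, so that the telescoping of Lemma~\ref{finall} (rather than the alternating variant) applies.
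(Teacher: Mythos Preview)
Your proposal is correct and follows exactly the approach the paper intends: the paper explicitly states that the proofs in this section are identical to those of Section~\ref{sec.ffn2hfi} (in particular Theorem~\ref{thm.yj680l9}) with identities~\eqref{equ.epvyp3u} in place of~\eqref{equ.d8ulygu}, and it records the corresponding finite summation identity, which is precisely what you derive via Lemma~\ref{finall} with $f(k)=1/L_{2k}$. Your index bookkeeping and the use of the parity hypothesis to get the minus sign from~\eqref{equ.ztsb3uk} are both spot on.
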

In particular,
\begin{equation}
\sum_{k = 1}^\infty  {\frac{{L_{2nk + nq} }}{{L_{2nk} L_{2nk + 2nq} }}}  = \frac{1}{{L_{nq} }}\sum_{k = 1}^q {\frac{1}{{L_{2nk} }}}\,,\quad\mbox{$nq$ odd}\,,
\end{equation}
\begin{equation}
\sum_{k = 1}^\infty  {\frac{{L_{2nk + 3nq} }}{{L_{2nk} L_{2nk + 2nq} L_{2nk + 4nq} L_{2nk + 6nq} }}}  = \frac{1}{{L_{3nq} }}\sum_{k = 1}^q {\frac{1}{{L_{2nk} L_{2nk + 2nq} L_{2nk + 4nq} }}}\,.
\end{equation}
The finite summation identity is
\begin{equation}
\begin{split}
L_{mnq} \sum_{k = 1}^N {\frac{{L_{2nk + mnq} }}{{\prod_{j = 0}^m {L_{2nk + 2jnq} } }}}  &= \sum_{k = 1}^q {\frac{1}{{\prod_{j = 0}^{m - 1} {L_{2nk + 2jnq} } }}}\qquad\qquad\mbox{$mnq$ odd}\\
&\qquad  - \sum_{k = 1}^q {\frac{1}{{\prod_{j = 0}^{m - 1} {L_{2nk + 2nN + 2jnq} } }}}\,.
\end{split}
\end{equation}
\begin{thm}
If $m$, $n$ and $q$ are positive integers such that $q$ is odd and $mn$ is even , then
\[
\sum_{k = 1}^\infty  {\frac{(-1)^{k-1}L_{2nk + mnq} }{{\prod_{j = 0}^m {L_{2nk + 2jnq} } }}}  = \frac{1}{{L_{mnq} }}\sum_{k = 1}^q {\frac{(-1)^{k-1}}{{\prod_{j = 0}^{m - 1} {L_{2nk + 2jnq} } }}}\,. 
\]
\end{thm}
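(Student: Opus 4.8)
The plan is to transcribe, mutatis mutandis, the proof of Theorem~\ref{thm.jarxg8t} in Section~\ref{sec.ffn2hfi}, swapping the Fibonacci product identity~\eqref{equ.mzt8c69} for its Lucas counterpart~\eqref{equ.ztsb3uk} from the group~\eqref{equ.epvyp3u}. First I would divide $L_vL_u = L_{u+v} + (-1)^v L_{u-v}$ through by $L_{u-v}L_{u+v}$ and specialise to $v = mnq$, $u = 2nk+mnq$, so that $u-v = 2nk$ and $u+v = 2nk+2mnq$. Since $q$ is odd and $mn$ is even, $mnq$ is even and $(-1)^{mnq} = 1$, which gives the basic identity
\[
\frac{L_{mnq}L_{2nk+mnq}}{L_{2nk}L_{2nk+2mnq}} = \frac{1}{L_{2nk}} + \frac{1}{L_{2nk+2mnq}}\,,
\]
valid for all positive integers $k,m,n,q$ with $mnq$ even. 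This is the Lucas-denominator analogue of the identity driving Theorem~\ref{thm.jarxg8t}, carrying the crucial $+$ sign needed to match Lemma~\ref{finqodd}.

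Next I would apply Lemma~\ref{finqodd} with $f(k) = 1/L_{2k}$; the standing hypothesis that $q$ is odd is exactly what that lemma demands. The bracketed term $f(nk) + f(nk+mnq)$ appearing there equals $1/L_{2nk} + 1/L_{2nk+2mnq}$, which the identity above rewrites as $L_{mnq}L_{2nk+mnq}/(L_{2nk}L_{2nk+2mnq})$; multiplying by $\prod_{j=1}^{m-1} f(nk+jnq) = \prod_{j=1}^{m-1} 1/L_{2nk+2jnq}$ and noting that the denominator factors $L_{2nk}$, $L_{2nk+2mnq}$ together with the $m-1$ middle ones reassemble into $\prod_{j=0}^m L_{2nk+2jnq}$, the left-hand side of Lemma~\ref{finqodd} becomes $L_{mnq}\sum_{k=1}^N (-1)^{k-1} L_{2nk+mnq}/\prod_{j=0}^m L_{2nk+2jnq}$, while the right-hand side becomes $\sum_{k=1}^q (-1)^{k-1}/\prod_{j=0}^{m-1} L_{2nk+2jnq}$ plus the usual $(-1)^{N-1}$-weighted remainder with $2nN$ added to every subscript.

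Finally I would let $N \to \infty$. Each Lucas number $L_{2nk+2nN+2jnq}$ in the remainder grows without bound by Binet's formula, so the remainder vanishes; dividing through by $L_{mnq}$ yields the stated closed form, the left-hand series converging since its general term is $O(\phi^{-2mnk})$. I do not expect a genuine obstacle --- the argument is a routine adaptation of the Fibonacci case --- but the one place to be careful is the parity bookkeeping: one must check that the hypothesis $q$ odd together with $mn$ even forces $mnq$ even, so that $(-1)^{mnq} = +1$ and Lemma~\ref{finqodd} (not Lemma~\ref{finqeven}) is the appropriate telescoping tool, while at the same time $q$ odd is precisely the hypothesis of Lemma~\ref{finqodd}; these two conditions dovetail, which is exactly why the theorem carries this parity constraint.
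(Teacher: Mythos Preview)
Your proposal is correct and follows precisely the approach the paper intends: the paper states that the proofs in this section are identical to those in Section~\ref{sec.ffn2hfi} using the identities~\eqref{equ.epvyp3u}, and you have carried out exactly that transcription of the proof of Theorem~\ref{thm.jarxg8t}, replacing~\eqref{equ.mzt8c69} by~\eqref{equ.ztsb3uk} and $f(k)=1/F_{2k}$ by $f(k)=1/L_{2k}$ in Lemma~\ref{finqodd}. The paper records the resulting finite summation identity just below the theorem, and it matches what you derive before taking $N\to\infty$.
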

In particular,
\begin{equation}
\sum_{k = 1}^\infty  {\frac{{(-1)^{k-1}L_{2nk + nq} }}{{L_{2nk} L_{2nk + 2nq} }}}  = \frac{1}{{L_{nq} }}\sum_{k = 1}^q {\frac{(-1)^{k-1}}{{L_{2nk} }}}\,,\qquad \mbox{$q$ odd, $n$ even}\,.
\end{equation}
The alternating finite summation identity is
\begin{equation}
\begin{split}
L_{mnq} \sum_{k = 1}^N {\frac{{(-1)^{k-1}L_{2nk + mnq} }}{{\prod_{j = 0}^m {L_{2nk + 2jnq} } }}}  &= \sum_{k = 1}^q {\frac{(-1)^{k-1}}{{\prod_{j = 0}^{m - 1} {L_{2nk + 2jnq} } }}}\qquad\qquad\mbox{$q$ odd, $mn$ even}\\
&\qquad  +(-1)^{N-1} \sum_{k = 1}^q {\frac{(-1)^{k-1}}{{\prod_{j = 0}^{m - 1} {L_{2nk + 2nN + 2jnq} } }}}\,.
\end{split}
\end{equation}
\begin{thm}
If $m$, $n$ and $q$ are positive integers such that $mnq$ is even, then
\[
\sum_{k = 1}^\infty  {\frac{{F_{2nk + mnq} }}{{\prod_{j = 0}^m {L_{2nk + 2jnq} } }}}  = \frac{1}{{5F_{mnq} }}\sum_{k = 1}^q {\frac{1}{{\prod_{j = 0}^{m - 1} {L_{2nk + 2jnq} } }}}\,. 
\]

\end{thm}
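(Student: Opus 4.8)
\subsubsection*{Proof of the theorem (proposal)}

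The plan is to follow the template used for Theorem~\ref{thm.un8i8oz} in Section~\ref{sec.ffn2hfi}, but starting from identity~\eqref{equ.q79u3q4} rather than~\eqref{equ.yb05ue2}, since we now want Lucas numbers in the denominator and a single Fibonacci number in the numerator. As the section header already advertises that the proofs here are the verbatim Lucas analogues of those in Section~\ref{sec.ffn2hfi}, the only genuinely new content is the choice of the basic two-term identity.

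First I would divide identity~\eqref{equ.q79u3q4}, $5F_vF_u = L_{u+v} - (-1)^vL_{u-v}$, through by $L_{u-v}L_{u+v}$ and set $v = mnq$, $u = 2nk+mnq$, so that $u-v = 2nk$ and $u+v = 2nk+2mnq$. Using $(-1)^{mnq}=1$ — this is exactly where the hypothesis that $mnq$ is even is used — this produces the identity
\[
\frac{5F_{mnq}F_{2nk+mnq}}{L_{2nk}L_{2nk+2mnq}} = \frac{1}{L_{2nk}} - \frac{1}{L_{2nk+2mnq}}\,,
\]
which is the Lucas-denominator counterpart of~\eqref{equ.cagwzby}.

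Next I would apply Lemma~\ref{finall} with $f(k) = 1/L_{2k}$, so that $f(nk) = 1/L_{2nk}$, $f(nk+jnq) = 1/L_{2nk+2jnq}$ and $f(nk+mnq) = 1/L_{2nk+2mnq}$. The bracketed expression $[\,f(nk) - f(nk+mnq)\,]\prod_{j=1}^{m-1}f(nk+jnq)$ then equals, after the displayed two-term identity, $5F_{mnq}\,F_{2nk+mnq}/\prod_{j=0}^m L_{2nk+2jnq}$, and Lemma~\ref{finall} gives the finite summation identity
\[
5F_{mnq}\sum_{k=1}^N \frac{F_{2nk+mnq}}{\prod_{j=0}^m L_{2nk+2jnq}} = \sum_{k=1}^q \frac{1}{\prod_{j=0}^{m-1}L_{2nk+2jnq}} - \sum_{k=1}^q \frac{1}{\prod_{j=0}^{m-1}L_{2nk+2nN+2jnq}}\,.
\]
Letting $N\to\infty$, every Lucas index in the last double sum tends to infinity, so that term vanishes; dividing by $5F_{mnq}$ then yields the claimed identity.

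The only place needing care — and the step I expect to be the main (mild) obstacle — is the index bookkeeping in composing $f(k)=1/L_{2k}$ with the arguments $nk+jnq$ of Lemma~\ref{finall}: one must verify that this yields precisely $\prod_{j=0}^m L_{2nk+2jnq}$ in the denominator on the left and $\prod_{j=0}^{m-1}L_{2nk+2jnq}$ on the right, and that the parity assumption on $mnq$ is exactly what makes the bracket a true \emph{difference} (so that the plain telescoping Lemma~\ref{finall} applies rather than the alternating Lemma~\ref{finqodd}). Convergence of the infinite series and the vanishing of the $N$-tail are immediate from the unbounded growth of the Lucas numbers.
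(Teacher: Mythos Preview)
Your proposal is correct and follows essentially the same route as the paper: the paper states that the proofs in Section~\ref{sec.esv6wcx} are identical to those in Section~\ref{sec.ffn2hfi} and are omitted, and the finite summation identity you derive is exactly the paper's identity~\eqref{equ.en25r4r}. Your choice of identity~\eqref{equ.q79u3q4} with $v=mnq$, $u=2nk+mnq$ and of $f(k)=1/L_{2k}$ in Lemma~\ref{finall} is precisely the Lucas analogue of the proof of Theorem~\ref{thm.un8i8oz}, as intended.
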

In particular,
\begin{equation}
\sum_{k = 1}^\infty  {\frac{{F_{2nk + nq} }}{{L_{2nk} L_{2nk + 2nq} }}}  = \frac{1}{{5F_{nq} }}\sum_{k = 1}^q {\frac{1}{{L_{2nk} }}}\,,\qquad\mbox{$nq$ even}\,,
\end{equation}
\begin{equation}
\sum_{k = 1}^\infty  {\frac{{F_{2nk + 3nq} }}{{L_{2nk} L_{2nk + 2nq} L_{2nk + 4nq} L_{2nk + 6nq} }}}  = \frac{1}{{5F_{3nq} }}\sum_{k = 1}^q {\frac{1}{{L_{2nk} L_{2nk + 2nq} L_{2nk + 4nq} }}}\,.
\end{equation}
The finite summation identity is
\begin{equation}
\begin{split}
5F_{mnq} \sum_{k = 1}^N {\frac{{F_{2nk + mnq} }}{{\prod_{j = 0}^m {L_{2nk + 2jnq} } }}}  &= \sum_{k = 1}^q {\frac{1}{{\prod_{j = 0}^{m - 1} {L_{2nk + 2jnq} } }}}\qquad\qquad\mbox{$mnq$ even}\\
&\qquad  - \sum_{k = 1}^q {\frac{1}{{\prod_{j = 0}^{m - 1} {L_{2nk + 2nN + 2jnq} } }}}\,,
\end{split}
\end{equation}
\begin{thm}
If $m$, $n$ and $q$ are positive integers such that $q$ is even \underline{or} $mnq$ is odd , then
\[
\sum_{k = 1}^\infty  {\frac{(-1)^{k-1}F_{2nk + mnq} }{{\prod_{j = 0}^m {L_{2nk + 2jnq} } }}}  = \frac{1}{{5F_{mnq} }}\sum_{k = 1}^q {\frac{(-1)^{k-1}}{{\prod_{j = 0}^{m - 1} {L_{2nk + 2jnq} } }}}\,. 
\]
\end{thm}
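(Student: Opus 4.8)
The plan is to follow verbatim the method of Section~\ref{sec.ffn2hfi}, simply interchanging the roles of the Fibonacci and Lucas numbers by starting from identity~\eqref{equ.q79u3q4} instead of~\eqref{equ.yb05ue2}. First I would divide~\eqref{equ.q79u3q4} through by $L_{u-v}L_{u+v}$ and set $v=mnq$, $u=2nk+mnq$, so that $u-v=2nk$ and $u+v=2nk+2mnq$; this produces
\[
\frac{5F_{mnq}F_{2nk+mnq}}{L_{2nk}L_{2nk+2mnq}}=\frac{1}{L_{2nk}}-\frac{(-1)^{mnq}}{L_{2nk+2mnq}},
\]
which becomes $\dfrac{1}{L_{2nk}}-\dfrac{1}{L_{2nk+2mnq}}$ when $mnq$ is even and $\dfrac{1}{L_{2nk}}+\dfrac{1}{L_{2nk+2mnq}}$ when $mnq$ is odd.

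Next I would take $f(k)=1/L_{2k}$, so that $f(nk)=1/L_{2nk}$, $f(nk+jnq)=1/L_{2nk+2jnq}$ and $f(nk+mnq)=1/L_{2nk+2mnq}$, and multiply the displayed identity by $\prod_{j=1}^{m-1}f(nk+jnq)$. The left side is then $5F_{mnq}$ times the summand of the target series, and the right side is $\bigl[f(nk)\mp f(nk+mnq)\bigr]\prod_{j=1}^{m-1}f(nk+jnq)$, with the upper sign for $mnq$ even and the lower sign for $mnq$ odd. This puts the expression in exactly the shape required by the alternating telescoping identity~\eqref{infallalt} (equivalently, by Lemma~\ref{finqeven} in the minus case and Lemma~\ref{finqodd} in the plus case, followed by $N\to\infty$).

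To apply~\eqref{infallalt} I would check its hypothesis: here $f(2Nn)=1/L_{4Nn}$ and $f((2N-1)n)=1/L_{(4N-2)n}$ both tend to $0$ as $N\to\infty$, so the common-limit requirement is met and the boundary term vanishes. The sign in~\eqref{infallalt} is the upper one precisely when $q$ is even and the lower one precisely when $q$ is odd; hence the $mnq$-even branch forces the extra hypothesis $q$ even, while the $mnq$-odd branch is automatic since $mnq$ odd implies $q$ odd. In either admissible case~\eqref{infallalt} gives
\[
5F_{mnq}\sum_{k=1}^\infty\frac{(-1)^{k-1}F_{2nk+mnq}}{\prod_{j=0}^m L_{2nk+2jnq}}=\sum_{k=1}^q(-1)^{k-1}\prod_{j=0}^{m-1}f(nk+jnq)=\sum_{k=1}^q\frac{(-1)^{k-1}}{\prod_{j=0}^{m-1}L_{2nk+2jnq}},
\]
and dividing by $5F_{mnq}$ yields the claim; the union of the two cases is exactly ``$q$ even or $mnq$ odd.''

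The only delicate point is the bookkeeping of the two parities: the minus-sign telescoping in~\eqref{infallalt} is valid only for $q$ even, which is the sole reason the $mnq$-even branch cannot be stated unconditionally. All the remaining manipulations --- the product-telescoping of $\prod_{j=0}^{m-1}f(nk+jnq)$ and the vanishing of the $N$-dependent boundary sum --- are identical to those already performed for the companion theorems of Section~\ref{sec.ffn2hfi}, so I would cite them rather than repeat the computation.
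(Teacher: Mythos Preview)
Your proposal is correct and follows exactly the approach the paper intends: the paper omits the proof, stating that it is identical to the corresponding argument in Section~\ref{sec.ffn2hfi} with identity~\eqref{equ.q79u3q4} in place of~\eqref{equ.yb05ue2}, and this is precisely what you carry out, including the correct parity bookkeeping that explains the hypothesis ``$q$ even or $mnq$ odd.''
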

In particular,
\begin{equation}
\sum_{k = 1}^\infty  {\frac{{(-1)^{k-1}F_{2nk + nq} }}{{L_{2nk} F_{2nk + 2nq} }}}  = \frac{1}{{5F_{nq} }}\sum_{k = 1}^q {\frac{(-1)^{k-1}}{{L_{2nk} }}}\,,\quad\mbox{$q$ even \underline{or} $nq$ odd}\,.
\end{equation}
The alternating summation identity here, valid for $q$ even or $mnq$ odd, is
\begin{equation}
\begin{split}
5F_{mnq} \sum_{k = 1}^N {\frac{{(-1)^{k-1}F_{2nk + mnq} }}{{\prod_{j = 0}^m {L_{2nk + 2jnq} } }}}  &= \sum_{k = 1}^q {\frac{(-1)^{k-1}}{{\prod_{j = 0}^{m - 1} {L_{2nk + 2jnq} } }}}\\
&\qquad  +(-1)^{N-1} \sum_{k = 1}^q {\frac{(-1)^{k-1}}{{\prod_{j = 0}^{m - 1} {L_{2nk + 2nN + 2jnq} } }}}\,.
\end{split}
\end{equation}

\subsection{Sums with $F_{nk}^2F_{nk+nq}^2\cdots F_{nk+mnq-nq}^2F_{nk+mnq}F_{nk+mnq+nq}^2\cdots F_{nk+2mnq}^2$ or $F_{nk}^2F_{nk+nq}^2\cdots F_{nk+mnq}^2$ in the denominator}
\begin{thm}\label{thm.kvn1tje}
If $m$, $n$ and $q$ are positive integers such that $mnq$ is even, then
\[
\sum_{k = 1}^\infty  {\left[ {\frac{{F_{2nk + mnq} }}{{\prod_{j = 0}^m {F_{nk + jnq}^2 } }}} \right]}  = \frac{1}{{F_{mnq} }}\sum_{k = 1}^q {\frac{1}{{\prod_{j = 0}^{m - 1} {F_{nk + jnq}^2 } }}}\,. 
\]
\end{thm}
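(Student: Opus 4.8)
The plan is to follow the template used for the earlier theorems in this paper: isolate a single telescoping identity of the shape ``(constant)$\,\cdot\,F_{2nk+mnq}\big/(F_{nk}^2F_{nk+mnq}^2)=1/F_{nk}^2-1/F_{nk+mnq}^2$'', feed it into Lemma~\ref{finall} with the choice $f(k)=1/F_k^2$, and pass to the limit $N\to\infty$. The natural source identity is~\eqref{equ.ded0k7c}. Setting $u=nk+mnq$ and $v=nk$ there, so that $u-v=mnq$ and $u+v=2nk+mnq$, gives
\[
F_{mnq}\,F_{2nk+mnq}=F_{nk+mnq}^2+(-1)^{\,mnq-1}F_{nk}^2 ,
\]
where I have used that $2nk+mnq\equiv mnq\pmod 2$. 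Under the hypothesis that $mnq$ is even we have $(-1)^{\,mnq-1}=-1$, hence
\[
\frac{F_{mnq}\,F_{2nk+mnq}}{F_{nk}^2\,F_{nk+mnq}^2}=\frac{1}{F_{nk}^2}-\frac{1}{F_{nk+mnq}^2}.
\]

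Next I would apply Lemma~\ref{finall} with $f(k)=1/F_k^2$. Multiplying the displayed difference $f(nk)-f(nk+mnq)$ by $\prod_{j=1}^{m-1}f(nk+jnq)=\prod_{j=1}^{m-1}1/F_{nk+jnq}^2$ converts the left‑hand side of the Lemma into $F_{mnq}F_{2nk+mnq}\big/\prod_{j=0}^m F_{nk+jnq}^2$, while its right‑hand side becomes $\sum_{k=1}^q\prod_{j=0}^{m-1}1/F_{nk+jnq}^2$ minus the same sum with $nk$ replaced by $nk+nN$. This yields the finite summation identity
\[
F_{mnq}\sum_{k=1}^N\frac{F_{2nk+mnq}}{\prod_{j=0}^m F_{nk+jnq}^2}
=\sum_{k=1}^q\frac{1}{\prod_{j=0}^{m-1}F_{nk+jnq}^2}
-\sum_{k=1}^q\frac{1}{\prod_{j=0}^{m-1}F_{nk+nN+jnq}^2}.
\]
Since $F_{nk+nN+jnq}\to\infty$ as $N\to\infty$ (equivalently $f_\infty=0$ in~\eqref{infall}), the subtracted sum vanishes in the limit; dividing by $F_{mnq}$, which is nonzero because $mnq\ge 2$, produces exactly the claimed identity.

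There is essentially no deep obstacle here; the only point that needs care is the parity bookkeeping. One must observe that $2nk+mnq$ has the same parity as $mnq$, so that the hypothesis ``$mnq$ even'' is precisely what collapses the factor $(-1)^{u+v-1}$ in~\eqref{equ.ded0k7c} to $-1$ and hence turns the right‑hand side into a genuine telescoping \emph{difference} $1/F_{nk}^2-1/F_{nk+mnq}^2$ rather than a sum. For $mnq$ odd one would instead obtain a plus sign, which is why that case is separated off into the alternating companion statement and handled via Lemma~\ref{finqodd} in place of Lemma~\ref{finall}.
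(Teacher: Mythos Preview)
Your proof is correct and follows essentially the same approach as the paper: both use identity~\eqref{equ.ded0k7c} with $u=nk+mnq$, $v=nk$ to obtain $\dfrac{F_{mnq}F_{2nk+mnq}}{F_{nk}^2F_{nk+mnq}^2}=\dfrac{1}{F_{nk}^2}-\dfrac{1}{F_{nk+mnq}^2}$ when $mnq$ is even, then apply Lemma~\ref{finall} with $f(k)=1/F_k^2$ and let $N\to\infty$. Your parity remark and the resulting finite summation identity match the paper's exactly.
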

Explicitly,
\[
\sum_{k = 1}^\infty  {\frac{{F_{2nk + mnq} }}{{F_{nk}^2 F_{nk + nq}^2  \cdots F_{nk + mnq}^2 }}}  = \frac{1}{{F_{mnq} }}\sum_{k = 1}^q {\frac{1}{{F_{nk}^2 F_{nk + nq}^2  \cdots F_{nk + (m - 1)nq}^2 }}}\,. 
\]
Examples include:
\begin{equation}
\begin{split}
&\mbox{At $m=1$}\\
&\sum_{k = 1}^\infty  {\frac{{F_{2nk + nq} }}{{F_{nk}^2 F_{nk + nq}^2 }}}  = \frac{1}{{F_{nq} }}\sum_{k = 1}^q {\frac{1}{{F_{nk}^2 }}},\quad \mbox{$nq$ even}\,.
\end{split}
\end{equation}
\begin{equation}
\begin{split}
&\mbox{At $(m,n,q)=(1,1,2)$ and $(m,n,q)=(1,2,1)$}:\\
&\sum_{k = 1}^\infty  {\frac{{F_{2k + 2} }}{{F_k^2 F_{k + 2}^2 }}}  = 2,\quad\sum_{k = 1}^\infty  {\frac{{F_{4k + 2} }}{{F_{2k}^2 F_{2k + 2}^2 }}}  = 1\,.
\end{split}
\end{equation}

\begin{equation}
\begin{split}
&\mbox{At $(m,n,q)=(3,2,2)$}:\\
&\sum_{k = 1}^\infty  {\frac{{F_{4k + 12} }}{{F_{2k}^2 F_{2k + 4}^2 F_{2k + 8}^2 F_{2k + 12}^2 }}}  = \frac{1288981}{35850395750400}\,.
\end{split}
\end{equation}

\begin{cor}\label{thm.rh1ujkm}
If $m$, $n$ and $q$ are positive integers, then
\[
\sum_{k = 1}^\infty  {\left[ {\frac{{L_{nk + mnq} }}{{F_{nk + mnq} \prod_{j = 0}^{m - 1} {F_{nk + jnq}^2 } \prod_{j = m + 1}^{2m} {F_{nk + jnq}^2 } }}} \right]}  = \frac{1}{{F_{2mnq} }}\sum_{k = 1}^q {\left[ {\prod_{j = 0}^{2m - 1} {\frac{1}{{F_{nk + jnq}^2 }}} } \right]}\,. 
\]

\end{cor}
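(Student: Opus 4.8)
The plan is to deduce Corollary~\ref{thm.rh1ujkm} directly from Theorem~\ref{thm.kvn1tje} by the substitution $m\mapsto 2m$, combined with the duplication formula $F_{2x}=F_xL_x$.

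First I would observe that $2mnq$ is even for all positive integers $m,n,q$, so replacing $m$ by $2m$ in Theorem~\ref{thm.kvn1tje} is always legitimate and yields
\[
\sum_{k=1}^\infty\left[\frac{F_{2nk+2mnq}}{\prod_{j=0}^{2m}F_{nk+jnq}^2}\right]=\frac{1}{F_{2mnq}}\sum_{k=1}^q\left[\prod_{j=0}^{2m-1}\frac{1}{F_{nk+jnq}^2}\right].
\]
The right-hand side is already the right-hand side of the corollary, so it only remains to massage the summand on the left.

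Next I would rewrite the numerator using identity~\eqref{equ.mzt8c69} with $u=v=nk+mnq$: since $F_0=0$ this gives $F_{2nk+2mnq}=F_{2(nk+mnq)}=F_{nk+mnq}L_{nk+mnq}$. Then I would split the denominator product by isolating the central ($j=m$) factor,
\[
\prod_{j=0}^{2m}F_{nk+jnq}^2=F_{nk+mnq}^2\,\prod_{j=0}^{m-1}F_{nk+jnq}^2\,\prod_{j=m+1}^{2m}F_{nk+jnq}^2,
\]
and cancel one copy of $F_{nk+mnq}$ between numerator and denominator. This turns the left-hand summand into
\[
\frac{L_{nk+mnq}}{F_{nk+mnq}\,\prod_{j=0}^{m-1}F_{nk+jnq}^2\,\prod_{j=m+1}^{2m}F_{nk+jnq}^2},
\]
which is exactly the summand in the statement, completing the proof.

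There is no real obstacle here: the whole argument is reindexing plus the elementary identity $F_{2x}=F_xL_x$. The only thing worth double-checking carefully is the index bookkeeping — that deleting $j=m$ from $\{0,1,\dots,2m\}$ leaves precisely the two ranges $\{0,\dots,m-1\}$ and $\{m+1,\dots,2m\}$, and that under $m\mapsto 2m$ the subscript $F_{mnq}$ on the right becomes $F_{2mnq}$ while the finite-sum product runs up to $j=2m-1$.
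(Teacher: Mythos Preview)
Your proof is correct and matches the paper's approach: the paper derives Corollary~\ref{thm.rh1ujkm} from Theorem~\ref{thm.kvn1tje} by ``specifically requiring $m$ to be even'' (i.e., the substitution $m\mapsto 2m$), after which the simplification $F_{2(nk+mnq)}=F_{nk+mnq}L_{nk+mnq}$ and cancellation of one $F_{nk+mnq}$ from the central squared factor are exactly what you carry out explicitly.
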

Explicitly, Corollary~\ref{thm.rh1ujkm} is
\[
\begin{split}
&\sum_{k = 1}^\infty  {\frac{{L_{nk + mnq} }}{{F_{nk}^2 F_{nk + nq}^2  \cdots F_{nk + (m - 1)nq}^2 F_{nk + mnq} F_{nk + (m + 1)nq}^2  \cdots F_{nk + 2mnq}^2 }}}\\
&\qquad\qquad\qquad= \frac{1}{{F_{2mnq} }}\sum_{k = 1}^q {\frac{1}{{F_{nk}^2 F_{nk + nq}^2  \cdots F_{nk + (2m - 1)nq}^2 }}}\,.
\end{split}
\]
Below are a couple of examples:
\begin{equation}
\begin{split}
&\mbox{At $(m,n,q)=(1,1,1)$ and $(m,n,q)=(2,1,1)$}:\\
&\sum_{k = 1}^\infty  {\frac{{L_{k + 1} }}{{F_k^2 F_{k + 1} F_{k + 2}^2 }}}  = 1,\quad\sum_{k = 1}^\infty  {\frac{{L_{k + 2} }}{{F_k^2 F_{k + 1}^2 F_{k + 2} F_{k + 3}^2 F_{k + 4}^2 }}}  = \frac{1}{{108}}
\,.
\end{split}
\end{equation}
\begin{equation}
\begin{split}
&\mbox{At $(m,n,q)=(3,2,2)$}:\\
&\sum_{k = 1}^\infty  {\frac{{L_{2k + 12} }}{{F_{2k}^2 F_{2k + 4}^2 F_{2k + 8}^2 F_{2k + 12} F_{2k + 16}^2 F_{2k + 20}^2 F_{2k + 24}^2 }}}\\
&= \frac{636693716175181614930457}{1701394375843622618689225675379000792710492054565683200}\,.
\end{split}
\end{equation}

\subsubsection*{Proof of Theorem~\ref{thm.kvn1tje}}
By making appropriate choices for the indices $u$ and $v$ in the identity~\eqref{equ.ded0k7c}, it is straightforward to establish the following identity:
\begin{equation}\label{equ.g4pbkha}
\frac{{F_{mnq} F_{2nk + mnq} }}{{F_{nk}^2 F_{nk + mnq}^2 }} = \frac{1}{{F_{nk}^2 }} - \frac{{( - 1)^{mnq} }}{{F_{nk + mnq}^2 }}\,,
\end{equation}
so that
\begin{equation}\label{equ.vcnyyet}
\frac{{F_{mnq} F_{2nk + mnq} }}{{F_{nk}^2 F_{nk + mnq}^2 }} = \frac{1}{{F_{nk}^2 }} - \frac{1 }{F_{nk + mnq}^2 },\quad\mbox{$mnq$ even}
\end{equation}
and
\begin{equation}\label{equ.werykpm}
\frac{{F_{mnq} F_{2nk + mnq} }}{{F_{nk}^2 F_{nk + mnq}^2 }} = \frac{1}{{F_{nk}^2 }} + \frac{1}{{F_{nk + mnq}^2 }},\quad\mbox{$mnq$ odd}\,.
\end{equation}
From~\eqref{equ.vcnyyet}, with $f(k)=1/F_k^2$ in Lemma~\ref{finall}, we have the finite summation identity
\begin{equation}
\begin{split}
\sum_{k = 1}^N {\frac{{F_{2nk + mnq} }}{{\prod_{j = 0}^m {F_{nk + jnq}^2 } }}}  &= \frac{1}{{F_{mnq} }}\sum_{k = 1}^q {\frac{1}{{\prod_{j = 0}^{m - 1} {F_{nk + jnq}^2 } }}}\\
&\qquad - \frac{1}{{F_{mnq} }}\sum_{k = 1}^q {\frac{1}{{\prod_{j = 0}^{m - 1} {F_{nk + nN + jnq}^2 } }}},\quad\mbox{$mnq$ even}\,.
\end{split}
\end{equation}
As $N$ approaches infinity, we have Theorem~\ref{thm.kvn1tje}, while specifically requiring $m$ to be even gives Corollary~\ref{thm.rh1ujkm}.

\begin{thm}\label{thm.hblrah9}
If $m$, $n$ and $q$ are integers such that $q$ is even \underline{or} $mnq$ is odd, then
\[
\sum_{k = 1}^\infty  {\left[ {\frac{{( - 1)^{k - 1} F_{2nk + mnq} }}{{\prod_{j = 0}^m {F_{nk + jnq}^2 } }}} \right]}  = \frac{1}{{F_{mnq} }}\sum_{k = 1}^q {\left[ {\frac{{( - 1)^{k - 1} }}{{\prod_{j = 0}^{m - 1} {F_{nk + jnq}^2 } }}} \right]}\,. 
\]
\end{thm}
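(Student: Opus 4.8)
The plan is to mirror the proof of Theorem~\ref{thm.kvn1tje}, but to invoke the alternating telescoping identity~\eqref{infallalt} in place of~\eqref{infall}. First I would recall the basic identity~\eqref{equ.g4pbkha}, which follows from~\eqref{equ.ded0k7c} on setting $u=nk+mnq$ and $v=nk$, and which reduces to~\eqref{equ.vcnyyet} when $mnq$ is even and to~\eqref{equ.werykpm} when $mnq$ is odd. Multiplying the appropriate one of these two identities through by $\prod_{j=1}^{m-1}1/F_{nk+jnq}^2$ and by $(-1)^{k-1}$ puts the left-hand side into the shape $F_{mnq}(-1)^{k-1}F_{2nk+mnq}/\prod_{j=0}^{m}F_{nk+jnq}^2$ and the right-hand side into the shape $(-1)^{k-1}\{[f(nk)\mp f(nk+mnq)]\prod_{j=1}^{m-1}f(nk+jnq)\}$ with $f(k)=1/F_k^2$, exactly the summand appearing in~\eqref{infallalt}.

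Next I would carry out the parity bookkeeping that makes the hypotheses precisely the right ones. If $q$ is even then $mnq$ is even, so~\eqref{equ.vcnyyet} (the minus form) is the relevant identity, and its minus sign matches the upper sign in~\eqref{infallalt}, which is the sign prescribed there for $q$ even. If instead $mnq$ is odd then $q$ is necessarily odd, so~\eqref{equ.werykpm} (the plus form) applies, and its plus sign matches the lower sign in~\eqref{infallalt}, prescribed for $q$ odd. The one remaining possibility, $q$ odd with $mnq$ even, is exactly the case in which the sign coming from~\eqref{equ.g4pbkha} and the sign demanded by~\eqref{infallalt} disagree, which is why it is excluded from the statement. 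This sign/parity matching is the only point of the argument requiring genuine care.

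Finally I would check the convergence hypothesis of~\eqref{infallalt}: with $f(k)=1/F_k^2$ we have $f(2Nn)\to 0$ and $f((2N-1)n)\to 0$ as $N\to\infty$, so the two subsequential limits coincide and~\eqref{infallalt} is applicable. Substituting $f(k)=1/F_k^2$ into~\eqref{infallalt} and dividing through by $F_{mnq}$ then yields the claimed identity. Equivalently, one may bypass~\eqref{infallalt} and instead apply Lemma~\ref{finqeven} when $q$ is even and Lemma~\ref{finqodd} when $mnq$ is odd, and let $N\to\infty$; in both cases the trailing sum contains the factor $1/F_{nk+nN+jnq}^2\to 0$ and therefore drops out, giving the same conclusion. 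Beyond the sign bookkeeping described above I do not anticipate any real obstacle, the remaining steps being routine appeals to the already-established lemmas.
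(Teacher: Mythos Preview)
Your proposal is correct and follows essentially the same approach as the paper's own proof, which likewise splits into the two cases $q$ even (using~\eqref{equ.vcnyyet} together with the upper sign in~\eqref{infallalt}) and $mnq$ odd (using~\eqref{equ.werykpm} together with the lower sign in~\eqref{infallalt}). Your parity bookkeeping and explicit verification of the convergence hypothesis are in fact more detailed than what the paper provides.
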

\begin{proof}
If $q$ is even, then the statement of the theorem follows from~\eqref{equ.vcnyyet} and identity~\eqref{infallalt} with the upper sign or from~\eqref{equ.werykpm} and identity~\eqref{infallalt} with the lower sign if $mnq$ is odd.
\end{proof}
In particular, we have
\begin{equation}\label{equ.l14z6cr}
\sum_{k = 1}^\infty  {\frac{{( - 1)^{k - 1} F_{2nk + nq} }}{{F_{nk}^2 F_{nk + nq}^2 }}}  = \frac{1}{{F_{nq} }}\sum_{k = 1}^q {\frac{{( - 1)^{k - 1} }}{{F_{nk}^2 }}},\quad \mbox{$q$ even or $nq$ odd}\,,
\end{equation}
which generalizes Brousseau's result (Formula~(6) of~\cite{brousseau1}, also derived by Melham in reference~\cite{melham16} as a special case of a more general result). Brousseau's formula~(6) corresponds to $n=1$ in~\eqref{equ.l14z6cr}.  Brousseau's formula~(15) in~\cite{brousseau1} is also contained in the identity~\eqref{equ.l14z6cr} above at \mbox{$n=3,\,q=1$}.

\bigskip

More examples from Theorem~\ref{thm.hblrah9}:
\begin{equation}
\sum_{k = 1}^\infty  {\frac{{( - 1)^{k - 1} F_{2nk + 3nq} }}{{F_{nk}^2 F_{nk + nq}^2 F_{nk + 2nq}^2 F_{nk + 3nq}^2 }}}  = \frac{1}{{F_{3nq} }}\sum_{k = 1}^q {\frac{{( - 1)^{k - 1} }}{{F_{nk}^2 F_{nk + nq}^2 F_{nk + 2nq}^2 }}},\quad \mbox{$q$ even or $nq$ odd}\,.
\end{equation}

\begin{cor}\label{thm.p79aeki}
If $q$ is a positive \underline{even} integer, then
\[
\sum_{k = 1}^\infty  {\left[ {\frac{{(-1)^{k-1}L_{nk + mnq} }}{{F_{nk + mnq} \prod_{j = 0}^{m - 1} {F_{nk + jnq}^2 } \prod_{j = m + 1}^{2m} {F_{nk + jnq}^2 } }}} \right]}  = \frac{1}{{F_{2mnq} }}\sum_{k = 1}^q {(-1)^{k-1}\left[ {\prod_{j = 0}^{2m - 1} {\frac{1}{{F_{nk + jnq}^2 }}} } \right]}\,. 
\]

\end{cor}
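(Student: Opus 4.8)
The plan is to deduce Corollary~\ref{thm.p79aeki} from Theorem~\ref{thm.hblrah9} by exactly the manoeuvre that produced Corollary~\ref{thm.rh1ujkm} from Theorem~\ref{thm.kvn1tje}: replace $m$ by $2m$ throughout, and then collapse the $j=m$ factor of the denominator product against the numerator.

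First I would apply Theorem~\ref{thm.hblrah9} with $m$ replaced everywhere by $2m$. The hypothesis there requires $q$ even \emph{or} $(2m)nq$ odd; since $2mnq$ is always even, this reduces precisely to the requirement that $q$ be even, which is the hypothesis we are handed. The theorem then yields
\[
\sum_{k = 1}^\infty {\frac{{( - 1)^{k - 1} F_{2nk + 2mnq} }}{{\prod_{j = 0}^{2m} {F_{nk + jnq}^2 } }}}  = \frac{1}{{F_{2mnq} }}\sum_{k = 1}^q {\frac{{( - 1)^{k - 1} }}{{\prod_{j = 0}^{2m - 1} {F_{nk + jnq}^2 } }}}\,,
\]
whose right-hand side is already the right-hand side claimed in the corollary.

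It then remains only to recast the summand on the left. The $j=m$ term of the product $\prod_{j=0}^{2m} F_{nk+jnq}^2$ is $F_{nk+mnq}^2$, while identity~\eqref{equ.yb05ue2} taken with $u=v=nk+mnq$ (and $F_0=0$) gives $F_{2nk+2mnq}=F_{nk+mnq}L_{nk+mnq}$. Cancelling one factor of $F_{nk+mnq}$ transforms the left-hand summand into
\[
\frac{{(-1)^{k-1}L_{nk + mnq} }}{{F_{nk + mnq} \prod_{j = 0}^{m - 1} {F_{nk + jnq}^2 } \prod_{j = m + 1}^{2m} {F_{nk + jnq}^2 } }}\,,
\]
which is exactly the summand in the statement, finishing the argument.

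There is essentially no obstacle; the only point demanding a moment's care is checking that the parity condition of Theorem~\ref{thm.hblrah9} is inherited after the substitution $m\to 2m$, which, as noted, happens precisely when $q$ is even. Equivalently, one may bypass Theorem~\ref{thm.hblrah9} altogether and argue from identity~\eqref{equ.vcnyyet} (with its $m$ taken to be $2m$, so that $mnq$ is even) together with identity~\eqref{infallalt} using $f(k)=1/F_k^2$ with the upper sign, obtaining the same finite-sum identity before letting $N\to\infty$.
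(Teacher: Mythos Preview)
Your proposal is correct and follows exactly the approach the paper intends: just as Corollary~\ref{thm.rh1ujkm} is obtained from Theorem~\ref{thm.kvn1tje} by ``specifically requiring $m$ to be even'', Corollary~\ref{thm.p79aeki} is the $m\to 2m$ specialization of Theorem~\ref{thm.hblrah9}, followed by the cancellation $F_{2nk+2mnq}=F_{nk+mnq}L_{nk+mnq}$. Your observation that the parity hypothesis of Theorem~\ref{thm.hblrah9} collapses to ``$q$ even'' under this substitution is precisely the point, and the alternative direct route you sketch via~\eqref{equ.vcnyyet} and~\eqref{infallalt} is likewise in line with the paper's methods.
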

In particular:
\begin{equation}
\sum_{k = 1}^\infty  {\frac{{( - 1)^{k - 1} L_{k + q} }}{{F_k^2 F_{k + q} F_{k + 2q}^2 }}}  = \frac{1}{{F_{2q} }}\sum_{k = 1}^q {\frac{{( - 1)^{k - 1} }}{{F_k^2 F_{k + q}^2 }}},\quad\mbox{$q$ even}\,.
\end{equation}
We note that Theorem~\ref{thm.kvn1tje}, Theorem~\ref{thm.hblrah9}, Corollary~\ref{thm.rh1ujkm} and Corollary~\ref{thm.p79aeki} correspond to setting $p=0$ in Theorem \ref{thm.byyca8f} and Theorem \ref{thm.xbwcpz1} of section~\ref{sec.cp47u43}.
\begin{thm}\label{thm.xlspejf}
If $m$, $n$ and $q$ are positive integers, then
\[
\sum_{k = 1}^\infty  {\left[ {\frac{{( - 1)^{nk - 1} F_{2nk + mnq} \prod_{j = 1}^{m - 1} {L_{nk + jnq}^2 } }}{{\prod_{j = 0}^m {F_{nk + jnq}^2 } }}} \right]}  = \frac{{5^m q}}{{4F_{mnq} }} - \frac{1}{{4F_{mnq} }}\sum_{k = 1}^q {\left[ {\prod_{j = 0}^{m - 1} {\frac{{L_{nk + jnq}^2 }}{{F_{nk + jnq}^2 }}} } \right]}\,. 
\]

\end{thm}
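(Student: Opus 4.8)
The plan is to reuse the telescoping template of Theorem~\ref{thm.kvn1tje} and its companions: construct a two-term ``seed'' identity that writes the summand, up to a $k$-independent factor, in the form $f(nk)-f(nk+mnq)$, and then feed $f$ into Lemma~\ref{finall} and pass to the limit (equivalently, invoke~\eqref{infall} directly).

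The right choice is $f(k)=L_k^2/F_k^2$. To build the seed identity I would first note the elementary relation $L_m^2-5F_m^2=4(-1)^m$, obtained by subtracting $5$ times~\eqref{equ.ded0k7c} from~\eqref{equ.wodrq78} and specializing to $v=0$; hence $L_m^2/F_m^2=5+4(-1)^m/F_m^2$. Taking the difference of the $m=nk+mnq$ and $m=nk$ instances, the constant term $5$ cancels and
\[
\frac{L_{nk+mnq}^2}{F_{nk+mnq}^2}-\frac{L_{nk}^2}{F_{nk}^2}
= -4(-1)^{nk}\!\left(\frac{1}{F_{nk}^2}-\frac{(-1)^{mnq}}{F_{nk+mnq}^2}\right)
= \frac{4(-1)^{nk-1}F_{mnq}F_{2nk+mnq}}{F_{nk}^2F_{nk+mnq}^2},
\]
the last equality being precisely the identity~\eqref{equ.g4pbkha} already established in the proof of Theorem~\ref{thm.kvn1tje}. (Alternatively the same seed identity follows from the factorization $L_a^2F_b^2-L_b^2F_a^2=(L_aF_b-L_bF_a)(L_aF_b+L_bF_a)$ together with~\eqref{equ.ejrnkwy} and~\eqref{equ.moytk3x}.)

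Given the seed identity, $f(nk)-f(nk+mnq)=4(-1)^{nk}F_{mnq}F_{2nk+mnq}/(F_{nk}^2F_{nk+mnq}^2)$. Multiplying by $\prod_{j=1}^{m-1}f(nk+jnq)=\prod_{j=1}^{m-1}L_{nk+jnq}^2/F_{nk+jnq}^2$, the denominator factors $F_{nk}^2$, $F_{nk+mnq}^2$ and $\prod_{j=1}^{m-1}F_{nk+jnq}^2$ merge into $\prod_{j=0}^m F_{nk+jnq}^2$, so the product is exactly $4(-1)^{nk}F_{mnq}$ times the summand in the theorem. Now~\eqref{infall} applies: the sequence $f$ converges, and $f_\infty=\lim_{N\to\infty}L_{nN}^2/F_{nN}^2=5$ (by Binet's formula, or directly from $L_{nN}^2/F_{nN}^2=5+4(-1)^{nN}/F_{nN}^2$), so $f_\infty^{\,m}=5^m$ and the right side of~\eqref{infall} is $\sum_{k=1}^q\prod_{j=0}^{m-1}L_{nk+jnq}^2/F_{nk+jnq}^2-5^m q$. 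Dividing through by $4F_{mnq}$ and then multiplying by $-1$ (which replaces $(-1)^{nk}$ by $(-1)^{nk-1}$) gives the stated closed form.

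The only genuinely delicate step is the sign bookkeeping in the seed identity, i.e.\ confirming that the prefactor emerges as $(-1)^{nk-1}$ rather than $(-1)^{nk}$ after tracking $(-1)^{mnq}$ and $(-1)^{nk}$ through the two-term reduction; everything after that is a mechanical specialization of~\eqref{infall}. A useful sanity check is $m=n=q=1$, where the seed identity reads $L_2^2/F_2^2-L_1^2/F_1^2=8=4F_1F_3/(F_1^2F_2^2)$ and the theorem reduces to $\sum_{k\ge1}(-1)^{k-1}F_{2k+1}/(F_k^2F_{k+1}^2)=\frac{5}{4}-\frac{1}{4}=1$.
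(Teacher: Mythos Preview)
Your proof is correct and follows essentially the same route as the paper: the same choice $f(k)=L_k^2/F_k^2$, the same seed identity~\eqref{equ.oeu2ljq}, and the same application of Lemma~\ref{finall}/\eqref{infall} with $f_\infty=5$. The only cosmetic difference is how you obtain the seed identity---via $L_m^2=5F_m^2+4(-1)^m$ combined with~\eqref{equ.g4pbkha} rather than by multiplying~\eqref{equ.ejrnkwy} and~\eqref{equ.moytk3x} directly---but you also note the paper's factorization route as an alternative, so the arguments coincide.
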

In particular
\begin{equation}
\sum_{k = 1}^\infty  {\frac{{( - 1)^{nk - 1} F_{2nk + nq} }}{{F_{nk}^2 F_{nk + nq}^2 }}}  = \frac{{5q}}{{4F_{nq} }} - \frac{1}{{4F_{nq} }}\sum_{k = 1}^q {\frac{{L_{nk}^2 }}{{F_{nk}^2 }}}\,.
\end{equation}

\begin{thm}\label{thm.wxp5pic}
If $n$ and $q$ are positive integers such that $n$ is odd and $q$ is even, then
\[
\sum_{k = 1}^\infty  {\left[ {\frac{{F_{2nk + mnq} \prod_{j = 1}^{m - 1} {L_{nk + jnq}^2 } }}{{\prod_{j = 0}^m {F_{nk + jnq}^2 } }}} \right]}  = \frac{1}{{4F_{mnq} }}\sum_{k = 1}^q {\left[ {( - 1)^k \prod_{j = 0}^{m - 1} {\frac{{L_{nk + jnq}^2 }}{{F_{nk + jnq}^2 }}} } \right]}\,. 
\]

\end{thm}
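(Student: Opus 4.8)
The plan is to mirror the argument behind Theorem~\ref{thm.xlspejf}, replacing the plain telescoping identity~\eqref{infall} by its alternating counterpart~\eqref{infallalt}. Everything rests on a single pointwise identity for the sequence $f(k)=L_k^2/F_k^2$, whose limiting value is $5$.

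First I would establish, for positive integers $u$ and $v$,
\[
\frac{L_u^2}{F_u^2}-\frac{L_v^2}{F_v^2}=\frac{(-1)^{v+1}\,4F_{u-v}F_{u+v}}{F_u^2F_v^2}\,.
\]
Putting the left side over the common denominator $F_u^2F_v^2$ and factoring the numerator as a difference of squares gives $L_u^2F_v^2-L_v^2F_u^2=(L_uF_v-L_vF_u)(L_uF_v+L_vF_u)$. Here identity~\eqref{equ.ejrnkwy} gives $L_uF_v+L_vF_u=2F_{u+v}$, while identity~\eqref{equ.moytk3x} gives $L_uF_v-L_vF_u=(-1)^{v+1}2F_{u-v}$; hence the numerator equals $(-1)^{v+1}4F_{u-v}F_{u+v}$.

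Next I would specialise to $u=nk+mnq$ and $v=nk$, which yields, with $f(k)=L_k^2/F_k^2$,
\[
f(nk)-f(nk+mnq)=\frac{(-1)^{nk}\,4F_{mnq}F_{2nk+mnq}}{F_{nk}^2F_{nk+mnq}^2}\,.
\]
Since $n$ is odd, $(-1)^{nk}=(-1)^k$. Multiplying through by $(-1)^{k-1}\prod_{j=1}^{m-1}f(nk+jnq)$, using $(-1)^{k-1}(-1)^k=-1$ and the collapse of denominators $F_{nk}^2\bigl(\prod_{j=1}^{m-1}F_{nk+jnq}^2\bigr)F_{nk+mnq}^2=\prod_{j=0}^{m}F_{nk+jnq}^2$, the left side becomes exactly the alternating summand appearing in~\eqref{infallalt} with the upper sign, while the right side becomes $-4F_{mnq}\,\dfrac{F_{2nk+mnq}\prod_{j=1}^{m-1}L_{nk+jnq}^2}{\prod_{j=0}^{m}F_{nk+jnq}^2}$.

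Finally, since $f(k)=L_k^2/F_k^2\to5$ the convergence hypothesis of~\eqref{infallalt} holds, and as $q$ is even the upper sign applies, so~\eqref{infallalt} yields
\[
-4F_{mnq}\sum_{k=1}^\infty\frac{F_{2nk+mnq}\prod_{j=1}^{m-1}L_{nk+jnq}^2}{\prod_{j=0}^{m}F_{nk+jnq}^2}=\sum_{k=1}^q(-1)^{k-1}\prod_{j=0}^{m-1}\frac{L_{nk+jnq}^2}{F_{nk+jnq}^2}\,.
\]
Dividing by $-4F_{mnq}$ and writing $-(-1)^{k-1}=(-1)^k$ gives the stated formula. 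The algebra of the first step is routine once~\eqref{equ.ejrnkwy} and~\eqref{equ.moytk3x} are invoked; the only point that needs care is the parity bookkeeping, namely ensuring that the absorbed factor $(-1)^{nk}=(-1)^k$ combines with $(-1)^{k-1}$ to leave precisely $(-1)^k$ on the right-hand side.
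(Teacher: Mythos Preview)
Your argument is correct and follows essentially the same route as the paper: derive the pointwise identity~\eqref{equ.oeu2ljq} by multiplying~\eqref{equ.ejrnkwy} and~\eqref{equ.moytk3x} (your difference-of-squares factoring is exactly this product), set $f(k)=L_k^2/F_k^2$, and then invoke~\eqref{infallalt} with the upper sign since $q$ is even. The only extra detail you supply is the explicit parity bookkeeping, which the paper leaves implicit.
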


\subsubsection*{Proof of Theorem~\ref{thm.xlspejf} and Theorem~\ref{thm.wxp5pic}}
Multiplying identity~\eqref{equ.ejrnkwy} and identity~\eqref{equ.moytk3x} and choosing $u$ and $v$ judiciously, it is easy to establish that the following identity holds for positive integers $m$, $n$, $q$ and $k$:
\begin{equation}\label{equ.oeu2ljq}
( - 1)^{nk - 1} \frac{{4F_{mnq} F_{2nk + mnq} }}{{F_{nk}^2 F_{nk + mnq}^2 }} = \frac{{L_{nk + mnq}^2 }}{{F_{nk + mnq}^2 }} - \frac{{L_{nk}^2 }}{{F_{nk}^2 }}\,.
\end{equation}
From identity~\eqref{equ.oeu2ljq} and $f(k)=L_k^2/F_k^2$ in Lemma~\ref{finall} we have the finite summation formula:
\begin{equation}
\begin{split}
&4F_{mnq} \sum_{k = 1}^N {\frac{{( - 1)^{nk - 1} F_{2nk + mnq} \prod_{j = 1}^{m - 1} {L_{nk + jnq}^2 } }}{{\prod_{j = 0}^m {F_{nk + jnq}^2 } }}}\\
&\qquad  = \sum_{k = 1}^q {\prod_{j = 0}^{m - 1} {\frac{{L_{nk + nN + jnq}^2 }}{{F_{nk + nN + jnq}^2 }}} }  - \sum_{k = 1}^q {\prod_{j = 0}^{m - 1} {\frac{{L_{nk + jnq}^2 }}{{F_{nk + jnq}^2 }}} }\,,
\end{split}
\end{equation}
from which Theorem~\ref{thm.xlspejf} follows in the limit $N$ approaches infinity. Theorem~\ref{thm.wxp5pic} follows from identity~\eqref{infallalt}.

\subsection{Sums with $L_{nk}^2L_{nk+nq}^2\cdots L_{nk+mnq-nq}^2L_{nk+mnq}L_{nk+mnq+nq}^2\cdots L_{nk+2mnq}^2$ or $L_{nk}^2L_{nk+nq}^2\cdots L_{nk+mnq}^2$ in the denominator}
The theorems in this section are the Lucas versions of those of the previous section. We omit their proofs. The basic identity is
\begin{equation}\label{equ.o1vymkb}
\frac{{5F_{mnq} F_{2nk + mnq} }}{{L_{nk}^2 L_{nk + mnq}^2 }} = \frac{1}{{L_{nk}^2 }} - \frac{{( - 1)^{mnq} }}{{L_{nk + mnq}^2 }}\,,
\end{equation}
which follows from the identity~\eqref{equ.wodrq78}.
\begin{thm}\label{thm.wfq1axk}
If $m$, $n$ and $q$ are positive integers such that $mnq$ is even, then
\[
\sum_{k = 1}^\infty  {\left[ {\frac{{F_{2nk + mnq} }}{{\prod_{j = 0}^m {L_{nk + jnq}^2 } }}} \right]}  = \frac{1}{{5F_{mnq} }}\sum_{k = 1}^q {\left[ {\prod_{j = 0}^{m - 1} {\frac{1}{{L_{nk + jnq}^2 }}} } \right]}\,. 
\]
\end{thm}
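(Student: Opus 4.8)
The plan is to mirror the proof of Theorem~\ref{thm.kvn1tje}, replacing the Fibonacci difference-of-squares identity~\eqref{equ.ded0k7c} by its Lucas counterpart~\eqref{equ.wodrq78} and then telescoping via Lemma~\ref{finall}. First I would derive the basic identity~\eqref{equ.o1vymkb}: setting $u=nk+mnq$ and $v=nk$ in~\eqref{equ.wodrq78} gives $5F_{mnq}F_{2nk+mnq}=L_{nk+mnq}^2-(-1)^{mnq}L_{nk}^2$, since $u+v=2nk+mnq$ so that $(-1)^{u+v-1}=-(-1)^{mnq}$. Dividing through by $L_{nk}^2 L_{nk+mnq}^2$ yields
\[
\frac{5F_{mnq}F_{2nk+mnq}}{L_{nk}^2 L_{nk+mnq}^2}=\frac{1}{L_{nk}^2}-\frac{(-1)^{mnq}}{L_{nk+mnq}^2}\,,
\]
which is~\eqref{equ.o1vymkb}. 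Under the hypothesis that $mnq$ is even we have $(-1)^{mnq}=1$, so this becomes the genuine difference
\[
\frac{5F_{mnq}F_{2nk+mnq}}{L_{nk}^2 L_{nk+mnq}^2}=\frac{1}{L_{nk}^2}-\frac{1}{L_{nk+mnq}^2}\,.
\]

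Next I would apply Lemma~\ref{finall} (or directly its infinite version~\eqref{infall}) with $f(k)=1/L_k^2$. With this choice $f(nk)-f(nk+mnq)$ is precisely the left-hand side displayed above, and multiplying by the product $\prod_{j=1}^{m-1}f(nk+jnq)=\prod_{j=1}^{m-1}1/L_{nk+jnq}^2$ turns the bracketed summand of Lemma~\ref{finall} into $5F_{mnq}F_{2nk+mnq}\big/\prod_{j=0}^{m}L_{nk+jnq}^2$. Since the Lucas numbers grow exponentially, $f(k)=1/L_k^2$ is convergent with $f_\infty=\lim_{N\to\infty}1/L_{nN}^2=0$, so $f_\infty{}^{m}q=0$ and the second term on the right of~\eqref{infall} drops out. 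This gives
\[
5F_{mnq}\sum_{k=1}^\infty\frac{F_{2nk+mnq}}{\prod_{j=0}^m L_{nk+jnq}^2}=\sum_{k=1}^q\prod_{j=0}^{m-1}\frac{1}{L_{nk+jnq}^2}\,,
\]
and dividing by $5F_{mnq}$ yields the claimed identity. For completeness one could record the underlying finite summation identity from Lemma~\ref{finall} before passing to the limit, in exact analogy with the displayed finite identities of the preceding sections.

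There is no genuine obstacle here: the argument is routine once the index substitution in~\eqref{equ.wodrq78} is made. The only points requiring a moment of care are (i) checking that $u+v=2nk+mnq$ has the parity producing a minus sign in $1/L_{nk}^2-1/L_{nk+mnq}^2$ — this is exactly why the hypothesis ``$mnq$ even'' is imposed, and is the Lucas analogue of the parity condition in Theorem~\ref{thm.kvn1tje} — and (ii) justifying that the residual term $\sum_{k=1}^q\prod_{j=0}^{m-1}1/L_{nk+nN+jnq}^2$ vanishes as $N\to\infty$, which is immediate from $L_r\to\infty$.
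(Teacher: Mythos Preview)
Your proof is correct and follows exactly the approach the paper indicates: it derives the basic identity~\eqref{equ.o1vymkb} from~\eqref{equ.wodrq78}, specializes to the case $mnq$ even, and then applies Lemma~\ref{finall} with $f(k)=1/L_k^2$, in direct analogy with the proof of Theorem~\ref{thm.kvn1tje}. The paper itself omits the proof, merely remarking that the theorems in this section are the Lucas versions of those in the previous section, so your write-up is in fact more explicit than what the paper provides.
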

Explicitly,
\[
\sum_{k = 1}^\infty  {\frac{{F_{2nk + mnq} }}{{L_{nk}^2 L_{nk + nq}^2  \cdots L_{nk + mnq}^2 }}}  = \frac{1}{{5F_{mnq} }}\sum_{k = 1}^q {\frac{1}{{L_{nk}^2 L_{nk + nq}^2  \cdots L_{nk + (m - 1)nq}^2 }}}\,. 
\]
\begin{cor}\label{thm.v07bx6w}
If $m$, $n$ and $q$ are positive integers, then
\[
\sum_{k = 1}^\infty  {\left[ {\frac{{F_{nk + mnq} }}{{L_{nk + mnq} \prod_{j = 0}^{m - 1} {L_{nk + jnq}^2 } \prod_{j = m + 1}^{2m} {L_{nk + jnq}^2 } }}} \right]}  = \frac{1}{{5F_{2mnq} }}\sum_{k = 1}^q {\left[ {\prod_{j = 0}^{2m - 1} {\frac{1}{{L_{nk + jnq}^2 }}} } \right]}\,. 
\]

\end{cor}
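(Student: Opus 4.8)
The plan is to obtain Corollary~\ref{thm.v07bx6w} as the specialization $m\mapsto 2m$ of Theorem~\ref{thm.wfq1axk}, in exact analogy with the way Corollary~\ref{thm.rh1ujkm} was deduced from Theorem~\ref{thm.kvn1tje}. The key preliminary remark is that for \emph{any} positive integers $m$, $n$, $q$ the product $(2m)nq=2mnq$ is even, so the hypothesis ``$mnq$ even'' of Theorem~\ref{thm.wfq1axk} is automatically fulfilled once $m$ is replaced by $2m$; this is precisely why Corollary~\ref{thm.v07bx6w} carries no parity restriction.

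Carrying out the substitution, Theorem~\ref{thm.wfq1axk} with $m\mapsto 2m$ reads
\[
\sum_{k=1}^\infty \frac{F_{2nk+2mnq}}{\prod_{j=0}^{2m}L_{nk+jnq}^2}=\frac{1}{5F_{2mnq}}\sum_{k=1}^q\prod_{j=0}^{2m-1}\frac{1}{L_{nk+jnq}^2}\,.
\]
The right-hand side is already the right-hand side claimed in Corollary~\ref{thm.v07bx6w}, so the only remaining task is to rewrite the summand on the left. For this I would use the doubling identity $F_{2u}=F_uL_u$, which is the case $u=v$ of~\eqref{equ.yb05ue2} (using $F_0=0$); taking $u=nk+mnq$ gives $F_{2nk+2mnq}=F_{nk+mnq}L_{nk+mnq}$. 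Separating the $j=m$ factor $L_{nk+mnq}^2$ out of the denominator product $\prod_{j=0}^{2m}L_{nk+jnq}^2$, cancelling one power of $L_{nk+mnq}$ against the new numerator, and splitting the surviving product at the missing index $j=m$ into the two ranges $0\le j\le m-1$ and $m+1\le j\le 2m$, one obtains
\[
\frac{F_{2nk+2mnq}}{\prod_{j=0}^{2m}L_{nk+jnq}^2}=\frac{F_{nk+mnq}}{L_{nk+mnq}\,\prod_{j=0}^{m-1}L_{nk+jnq}^2\,\prod_{j=m+1}^{2m}L_{nk+jnq}^2}\,,
\]
and substituting this back reproduces the statement of Corollary~\ref{thm.v07bx6w} verbatim.

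I do not expect a genuine obstacle here: the argument is purely formal, and the only points demanding care are the verification that $2mnq$ is always even (so that all parity hypotheses disappear) and the bookkeeping of product index ranges when the central factor at $j=m$ is pulled out. For completeness the corollary can also be proved directly along the template used for Theorem~\ref{thm.kvn1tje}: start from identity~\eqref{equ.o1vymkb} with $m$ replaced by $2m$, in which the sign $(-1)^{2mnq}$ equals $+1$, so that it becomes the telescoping-ready relation $5F_{2mnq}F_{2nk+2mnq}/(L_{nk}^2L_{nk+2mnq}^2)=1/L_{nk}^2-1/L_{nk+2mnq}^2$; then apply Lemma~\ref{finall} with $f(k)=1/L_k^2$ (and $m\mapsto 2m$, $q$ unchanged) to obtain the finite identity, and pass to the limit $N\to\infty$, the trailing sum vanishing by~\eqref{equ.y4ibpqi}. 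The resulting infinite identity is the displayed specialization above, and the same manipulation via $F_{2u}=F_uL_u$ converts it into the form stated in the corollary.
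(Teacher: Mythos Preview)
Your proposal is correct and matches the paper's own approach: the paper states that the results of this section are the Lucas versions of those in the preceding section, and for the analogous Fibonacci result (Corollary~\ref{thm.rh1ujkm}) it derives the corollary from Theorem~\ref{thm.kvn1tje} by ``specifically requiring $m$ to be even,'' i.e.\ exactly the substitution $m\mapsto 2m$ you carry out in Theorem~\ref{thm.wfq1axk}. The use of $F_{2u}=F_uL_u$ to cancel one power of $L_{nk+mnq}$ and split the remaining product is the correct bookkeeping step.
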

\begin{thm}\label{thm.t9u45bs}
If $m$, $n$ and $q$ are positive integers such that $q$ is even \underline{or} $mnq$ is odd, then
\[
\sum_{k = 1}^\infty  {\left[ {\frac{{( - 1)^{k - 1} F_{2nk + mnq} }}{{\prod_{j = 0}^m {L_{nk + jnq}^2 } }}} \right]}  = \frac{1}{{5F_{mnq} }}\sum_{k = 1}^q {\left[ {\frac{{( - 1)^{k - 1} }}{{\prod_{j = 0}^{m - 1} {L_{nk + jnq}^2 } }}} \right]}\,. 
\]

\end{thm}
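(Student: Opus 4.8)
The plan is to mirror the proof of Theorem~\ref{thm.hblrah9}, substituting squares of Lucas numbers for squares of Fibonacci numbers in the denominators and using the Lucas analogue~\eqref{equ.o1vymkb} of identity~\eqref{equ.g4pbkha} in place of the latter. First I would split~\eqref{equ.o1vymkb} into its two parity specializations: since $(-1)^{mnq}$ is $+1$ when $mnq$ is even and $-1$ when $mnq$ is odd, identity~\eqref{equ.o1vymkb} gives
\[
\frac{5F_{mnq}F_{2nk+mnq}}{L_{nk}^2 L_{nk+mnq}^2} = \frac{1}{L_{nk}^2} - \frac{1}{L_{nk+mnq}^2},\qquad mnq\ \text{even},
\]
and
\[
\frac{5F_{mnq}F_{2nk+mnq}}{L_{nk}^2 L_{nk+mnq}^2} = \frac{1}{L_{nk}^2} + \frac{1}{L_{nk+mnq}^2},\qquad mnq\ \text{odd},
\]
which play the roles that~\eqref{equ.vcnyyet} and~\eqref{equ.werykpm} play in the Fibonacci case.

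Next I would take $f(k)=1/L_k^2$. Because $n\ge 1$, we have $L_{nk}\to\infty$ as $k\to\infty$, so $f(nk)\to 0$; in particular $f(2nN)$ and $f((2N-1)n)$ share the common limit $0$, so the convergence hypotheses needed to apply identity~\eqref{infallalt} are met. Multiplying the appropriate one of the two identities above through by $\prod_{j=1}^{m-1}f(nk+jnq)=\prod_{j=1}^{m-1}L_{nk+jnq}^{-2}$ collapses the left-hand side to a single summand: in both cases
\[
\bigl[f(nk)\mp f(nk+mnq)\bigr]\prod_{j=1}^{m-1}f(nk+jnq) = \frac{5F_{mnq}F_{2nk+mnq}}{\prod_{j=0}^m L_{nk+jnq}^2},
\]
where the minus sign is taken when $mnq$ is even and the plus sign when $mnq$ is odd.

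Finally I would feed $f(k)=1/L_k^2$ into identity~\eqref{infallalt}. Under the standing hypothesis, either $q$ is even --- in which case $q\mid mnq$ forces $mnq$ even, the minus form above is in force, and this is precisely the upper-sign instance of~\eqref{infallalt} --- or $mnq$ is odd, which forces $q$ odd, the plus form is in force, and this is the lower-sign instance. In either case~\eqref{infallalt} yields
\[
5F_{mnq}\sum_{k=1}^\infty \frac{(-1)^{k-1}F_{2nk+mnq}}{\prod_{j=0}^m L_{nk+jnq}^2} = \sum_{k=1}^q (-1)^{k-1}\prod_{j=0}^{m-1}\frac{1}{L_{nk+jnq}^2},
\]
and dividing by $5F_{mnq}$ gives the asserted identity. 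The argument is entirely routine; the only point demanding attention is the sign bookkeeping, namely checking that the sign selected in~\eqref{equ.o1vymkb} by the parity of $mnq$ coincides with the sign that~\eqref{infallalt} attaches to the parity of $q$. Since $q$ divides $mnq$, these two prescriptions are consistent exactly on the hypothesis ``$q$ even or $mnq$ odd,'' so no genuine obstacle arises.
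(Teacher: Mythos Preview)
Your proposal is correct and follows exactly the approach the paper intends: the paper explicitly omits the proof, stating that it is the Lucas version of Theorem~\ref{thm.hblrah9}'s proof, using identity~\eqref{equ.o1vymkb} in place of~\eqref{equ.g4pbkha}, which is precisely what you do. Your sign bookkeeping (matching the parity of $mnq$ in~\eqref{equ.o1vymkb} to the parity of $q$ in~\eqref{infallalt} via the observation that $q\mid mnq$) is handled correctly and is indeed the only nonroutine point.
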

In particular, we have
\begin{equation}\label{equ.gk5b94r}
\sum_{k = 1}^\infty  {\frac{{( - 1)^{k - 1} F_{2nk + nq} }}{{L_{nk}^2 L_{nk + nq}^2 }}}  = \frac{1}{{5F_{nq} }}\sum_{k = 1}^q {\frac{{( - 1)^{k - 1} }}{{L_{nk}^2 }}},\quad \mbox{$q$ even or $nq$ odd}\,,
\end{equation}
\begin{cor}\label{thm.uho7fyr}
If $q$ is a positive \underline{even} integer, then
\[
\sum_{k = 1}^\infty  {\left[ {\frac{{(-1)^{k-1}F_{nk + mnq} }}{{L_{nk + mnq} \prod_{j = 0}^{m - 1} {L_{nk + jnq}^2 } \prod_{j = m + 1}^{2m} {L_{nk + jnq}^2 } }}} \right]}  = \frac{1}{{5F_{2mnq} }}\sum_{k = 1}^q {\left[ {\frac{{( - 1)^{k - 1} }}{{\prod_{j = 0}^{2m - 1} {L_{nk + jnq}^2 } }}} \right]}\,. 
\]

\end{cor}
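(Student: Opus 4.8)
The plan is to obtain this corollary as a specialization of Theorem~\ref{thm.t9u45bs}, exactly as Corollary~\ref{thm.p79aeki} is obtained from Theorem~\ref{thm.hblrah9} in the Fibonacci case. I would apply Theorem~\ref{thm.t9u45bs} with the parameter $m$ replaced throughout by $2m$. Since $2mnq$ is even for all positive integers $m$, $n$, $q$, the alternative hypothesis ``$q$ even \emph{or} $mnq$ odd'' of Theorem~\ref{thm.t9u45bs} collapses, after this substitution, to the single requirement that $q$ be even, which is precisely the hypothesis of the corollary.

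After the substitution, Theorem~\ref{thm.t9u45bs} reads
\[
\sum_{k=1}^\infty \frac{(-1)^{k-1}F_{2nk+2mnq}}{\prod_{j=0}^{2m} L_{nk+jnq}^2} = \frac{1}{5F_{2mnq}}\sum_{k=1}^q \frac{(-1)^{k-1}}{\prod_{j=0}^{2m-1} L_{nk+jnq}^2}\,,
\]
and the right-hand side is already the one claimed. For the left-hand side I would invoke the duplication formula $F_{2x}=F_x L_x$ (immediate from~\eqref{equ.mzt8c69} with $u=v=x$, since $F_0=0$) at $x=nk+mnq$, giving $F_{2nk+2mnq}=F_{nk+mnq}L_{nk+mnq}$. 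The product $\prod_{j=0}^{2m} L_{nk+jnq}^2$ carries the factor $L_{nk+mnq}^2$ from the term $j=m$; cancelling one copy of $L_{nk+mnq}$ against the numerator and splitting the remaining product over the ranges $0\le j\le m-1$ and $m+1\le j\le 2m$ converts the summand into
\[
\frac{(-1)^{k-1}F_{nk+mnq}}{L_{nk+mnq}\,\prod_{j=0}^{m-1} L_{nk+jnq}^2\,\prod_{j=m+1}^{2m} L_{nk+jnq}^2}\,,
\]
which is exactly the summand in the statement, so the identity follows.

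I do not expect a genuine obstacle. The two places demanding care are the parity bookkeeping (checking that $m\mapsto 2m$ really does force the even-$q$ branch of Theorem~\ref{thm.t9u45bs}) and the extraction of the central factor $L_{nk+mnq}^2$, so that the reduced product splits precisely over $0\le j\le m-1$ and $m+1\le j\le 2m$. If one wishes to avoid citing Theorem~\ref{thm.t9u45bs}, an equivalent self-contained route is to start from the basic identity~\eqref{equ.o1vymkb} with $mnq$ replaced by $2mnq$ (so the sign $(-1)^{2mnq}$ becomes $+1$), set $f(k)=1/L_k^2$ in the alternating summation identity~\eqref{infallalt} with the upper sign (valid because $q$ is even), and then perform the same duplication-and-splitting step; this also exhibits the corollary as the $p=0$ instance of the corresponding result in section~\ref{sec.cp47u43}.
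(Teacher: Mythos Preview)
Your proposal is correct and follows essentially the same route as the paper: the Lucas-version corollaries in this subsection are stated with proofs omitted, being direct analogues of the Fibonacci case, and Corollary~\ref{thm.p79aeki} there is obtained precisely by taking $m\to 2m$ in Theorem~\ref{thm.hblrah9} and invoking $F_{2x}=F_xL_x$ to split the central factor. Your parity check and the extraction of $L_{nk+mnq}^2$ from the product are exactly the steps the paper (implicitly) uses.
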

In particular:
\begin{equation}
\sum_{k = 1}^\infty  {\frac{{( - 1)^{k - 1} F_{k + q} }}{{L_k^2 L_{k + q} L_{k + 2q}^2 }}}  = \frac{1}{{5F_{2q} }}\sum_{k = 1}^q {\frac{{( - 1)^{k - 1} }}{{L_k^2 L_{k + q}^2 }}},\quad\mbox{$q$ even}\,.
\end{equation}

\begin{thm}\label{thm.fw565ya}
If $m$, $n$ and $q$ are positive integers, then
\[
\sum_{k = 1}^\infty  {\left[ {\frac{{( - 1)^{nk - 1} F_{2nk + mnq} \prod_{j = 1}^{m - 1} {F_{nk + jnq}^2 } }}{{\prod_{j = 0}^m {L_{nk + jnq}^2 } }}} \right]}  =  \frac{1}{{4F_{mnq} }}\sum_{k = 1}^q {\left[ {\prod_{j = 0}^{m - 1} {\frac{{F_{nk + jnq}^2 }}{{L_{nk + jnq}^2 }}} } \right]}-\frac{{1}}{{4F_{mnq} }}\frac q{5^m}\,. 
\]

\end{thm}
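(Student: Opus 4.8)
The plan is to follow, almost verbatim, the argument used for Theorem~\ref{thm.xlspejf}, the only structural change being that the basic identity now carries $L$'s rather than $F$'s in the denominator. First I would establish the Lucas analogue of~\eqref{equ.oeu2ljq}. Multiplying~\eqref{equ.ejrnkwy} by~\eqref{equ.moytk3x} and recognising the right-hand side as a difference of squares,
\[
4(-1)^v F_{u+v}F_{u-v} = (L_v F_u + L_u F_v)(F_u L_v - L_u F_v) = F_u^2 L_v^2 - F_v^2 L_u^2,
\]
so that, dividing by $L_u^2 L_v^2$ and then setting $u = nk+mnq$, $v = nk$ (hence $u+v = 2nk+mnq$, $u-v = mnq$) and multiplying by $-1$, one gets the telescoping-ready identity
\[
(-1)^{nk-1}\frac{4F_{mnq}F_{2nk+mnq}}{L_{nk}^2\,L_{nk+mnq}^2} = \frac{F_{nk}^2}{L_{nk}^2} - \frac{F_{nk+mnq}^2}{L_{nk+mnq}^2}.
\]

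Next I would take $f(k) = F_k^2/L_k^2$ in Lemma~\ref{finall}. Multiplying the last display through by $\prod_{j=1}^{m-1} F_{nk+jnq}^2/L_{nk+jnq}^2$ and using $L_{nk}^2\,L_{nk+mnq}^2\prod_{j=1}^{m-1}L_{nk+jnq}^2 = \prod_{j=0}^{m} L_{nk+jnq}^2$, the left-hand side becomes $4F_{mnq}$ times the summand of the theorem, while the right-hand side is precisely $\bigl[f(nk)-f(nk+mnq)\bigr]\prod_{j=1}^{m-1} f(nk+jnq)$. Applying Lemma~\ref{finall} (or directly its infinite form~\eqref{infall}) then gives
\[
4F_{mnq}\sum_{k=1}^{\infty}\frac{(-1)^{nk-1}F_{2nk+mnq}\prod_{j=1}^{m-1}F_{nk+jnq}^2}{\prod_{j=0}^{m}L_{nk+jnq}^2} = \sum_{k=1}^{q}\prod_{j=0}^{m-1}\frac{F_{nk+jnq}^2}{L_{nk+jnq}^2} - f_\infty^{\,m}q,
\]
with $f_\infty = \lim_{N\to\infty} F_{nN}^2/L_{nN}^2$. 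Finally, from the limiting values~\eqref{equ.y4ibpqi} (which give $\lim_{N\to\infty}F_N/L_N = 1/\sqrt5$) one has $f_\infty = 1/5$, so $f_\infty^{\,m}q = q/5^m$; dividing by $4F_{mnq}$ yields the stated formula.

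I expect the only genuinely non-routine point to be the sign/index bookkeeping in the opening identity: one must check that $(L_v F_u + L_u F_v)(F_u L_v - L_u F_v)$ really collapses to $F_u^2 L_v^2 - F_v^2 L_u^2$ and, crucially, that the factor $(-1)^v$ inherited from~\eqref{equ.moytk3x} becomes $(-1)^{nk}$ (not $(-1)^{nk+mnq}$) after the substitution $v=nk$, so that the exponent $nk-1$ appearing in the summand comes out correctly after the final negation. Everything downstream is the same telescoping mechanism as in Theorem~\ref{thm.xlspejf}; the sole qualitative difference is that, because $F/L$ ratios (rather than $L/F$ ratios) now sit in the tail, the vanishing tail contributes the constant $q/5^m$ in place of the $\phi$-power terms that arise in the Fibonacci-denominator case.
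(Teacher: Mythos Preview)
Your proposal is correct and follows essentially the same approach as the paper. The paper explicitly omits the proof of this theorem, stating only that it is the Lucas version of Theorem~\ref{thm.xlspejf}; your argument is precisely the analogue of that proof, with $f(k)=F_k^2/L_k^2$ in place of $f(k)=L_k^2/F_k^2$ and the limit $f_\infty=1/5$ in place of $f_\infty=5$. One small inaccuracy in your closing remark: the tail in the Fibonacci-denominator case (Theorem~\ref{thm.xlspejf}) contributes $5^m q$, not a $\phi$-power term; the $\phi$-powers appear only in the theorems built on the $F_{k+np}/F_k$ identities, not here.
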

In particular
\begin{equation}
\sum_{k = 1}^\infty  {\frac{{( - 1)^{nk - 1} F_{2nk + nq} }}{{L_{nk}^2 L_{nk + nq}^2 }}}  = \frac{1}{{4F_{nq} }}\sum_{k = 1}^q {\frac{{F_{nk}^2 }}{{L_{nk}^2 }}}-\frac{q}{{20F_{nq} }}\,.
\end{equation}

\begin{thm}\label{thm.qwgs5xu}
If $n$ and $q$ are positive integers such that $n$ is odd and $q$ is even, then
\[
\sum_{k = 1}^\infty  {\left[ {\frac{{F_{2nk + mnq} \prod_{j = 1}^{m - 1} {F_{nk + jnq}^2 } }}{{\prod_{j = 0}^m {L_{nk + jnq}^2 } }}} \right]}  = \frac{1}{{4F_{mnq} }}\sum_{k = 1}^q {\left[ {( - 1)^{k-1} \prod_{j = 0}^{m - 1} {\frac{{F_{nk + jnq}^2 }}{{L_{nk + jnq}^2 }}} } \right]}\,. 
\]

\end{thm}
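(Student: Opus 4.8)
The plan is to follow the route used for the $F$-denominator analogue Theorem~\ref{thm.wxp5pic}: produce a basic identity that writes the summand as a telescoping difference $f(nk)-f(nk+mnq)$ times a product, and then feed it into the alternating telescoping identity~\eqref{infallalt}.

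First I would record the basic identity. Multiplying the Vajda identities~\eqref{equ.ejrnkwy} and~\eqref{equ.moytk3x} gives $(-1)^v 4F_{u+v}F_{u-v} = L_v^2 F_u^2 - L_u^2 F_v^2$; taking $u = nk+mnq$ and $v = nk$, so that $u+v = 2nk+mnq$ and $u-v = mnq$, and dividing through by $L_{nk}^2 L_{nk + mnq}^2$ yields
\begin{equation*}
( - 1)^{nk - 1} \frac{{4F_{mnq} F_{2nk + mnq} }}{{L_{nk}^2 L_{nk + mnq}^2 }} = \frac{{F_{nk}^2 }}{{L_{nk}^2 }} - \frac{{F_{nk + mnq}^2 }}{{L_{nk + mnq}^2 }}\,,
\end{equation*}
which is the Lucas analogue of~\eqref{equ.oeu2ljq} and is precisely the identity underlying Theorem~\ref{thm.fw565ya}.

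Next, set $f(k) = F_k^2/L_k^2$. Since $n$ is odd we have $(-1)^{nk-1} = (-1)^{k-1}$, so the basic identity reads $f(nk) - f(nk+mnq) = (-1)^{k-1}\, 4F_{mnq}F_{2nk+mnq}\big/(L_{nk}^2 L_{nk+mnq}^2)$. Multiplying by $\prod_{j=1}^{m-1} f(nk+jnq)$ and then by the factor $(-1)^{k-1}$ that occurs in~\eqref{infallalt}, the two sign factors cancel, and the left-hand summand of~\eqref{infallalt} becomes $4F_{mnq}$ times the summand of the theorem, the $L_{nk}^2$, $L_{nk+mnq}^2$ and $\prod_{j=1}^{m-1}L_{nk+jnq}^2$ in the denominator combining into $\prod_{j=0}^m L_{nk+jnq}^2$. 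I would then apply~\eqref{infallalt} with the upper sign: this is legitimate since $q$ is even, and since by Binet's formula $f(k) = F_k^2/L_k^2$ converges to $1/5$ as $k\to\infty$ irrespective of parity, so the two subsequential limits required in~\eqref{infallalt} agree. Dividing through by $4F_{mnq}$ produces the stated closed form. Observe that, unlike in Theorem~\ref{thm.fw565ya}, no residual $q/5^m$ term remains, because for even $q$ the boundary term of~\eqref{infallalt} has already vanished.

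No real obstacle arises: the argument is simply the Lucas, $q$-even specialization of the scheme already employed for Theorems~\ref{thm.xlspejf}, \ref{thm.wxp5pic} and~\ref{thm.fw565ya}. The only steps that need care are verifying the basic identity with the right choice of $u$ and $v$ and the correct power of $-1$, and keeping the product indices straight when merging $\prod_{j=1}^{m-1} f(nk+jnq)$ with the two leftover Lucas denominators into the single product $\prod_{j=0}^m L_{nk+jnq}^2$ in the statement.
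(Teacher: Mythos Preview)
Your proposal is correct and follows essentially the same route the paper indicates: the paper omits the proof of this theorem, stating only that the section gives the Lucas versions of the preceding section, so the intended argument is exactly the Lucas analogue of the proof of Theorem~\ref{thm.wxp5pic}, i.e., derive the basic identity $( - 1)^{nk - 1} 4F_{mnq} F_{2nk + mnq}/(L_{nk}^2 L_{nk + mnq}^2) = F_{nk}^2/L_{nk}^2 - F_{nk + mnq}^2/L_{nk + mnq}^2$ by multiplying~\eqref{equ.ejrnkwy} and~\eqref{equ.moytk3x} and dividing by $L_{nk}^2 L_{nk+mnq}^2$, then apply~\eqref{infallalt} with $f(k)=F_k^2/L_k^2$ and the upper sign for $q$ even. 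Your sign bookkeeping, the merging of the Lucas denominators into $\prod_{j=0}^m L_{nk+jnq}^2$, and the verification of the limit condition are all in order.
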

In particular
\begin{equation}
\sum_{k = 1}^\infty  {\frac{{ F_{2nk + nq} }}{{L_{nk}^2 L_{nk + nq}^2 }}}  = \frac{1}{{4F_{nq} }}\sum_{k = 1}^q {( - 1)^{k - 1}\frac{{F_{nk}^2 }}{{L_{nk}^2 }}},\quad\mbox{$n$ odd, $q$ even}\,.
\end{equation}

\subsection{Sums with $F_{nk}F_{nk+np}F_{nk+nq}F_{nk+nq+np}F_{nk+2nq}F_{nk+2nq+np}$$\cdots$ $F_{nk+mnq}F_{nk+mnq+np}$ in the denominator}\label{sec.cp47u43}
\begin{thm}\label{thm.byyca8f}
If $m$, $n$, $q$ are positive integers such that $mnq$ is even; and $p$ is a non-negative integer, then
\[
\sum_{k = 1}^\infty  {\frac{{F_{2nk + mnq + np} }}{{\prod_{j = 0}^m {F_{nk + jnq} F_{nk + jnq + np} } }}}  = \frac{1}{{F_{mnq} }}\sum_{k = 1}^q {\frac{1}{{\prod_{j = 0}^{m - 1} {F_{nk + jnq} F_{nk + jnq + np} } }}}\,. 
\]
\end{thm}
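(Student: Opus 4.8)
The plan is to follow the template used throughout Section~2: manufacture a single ``difference of reciprocal products'' identity that isolates the factor $F_{2nk+mnq+np}$, and then feed it into the telescoping Lemma~\ref{finall} applied to the sequence $f(k)=1/(F_kF_{k+np})$.

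First I would fix the index choice in Howard's identity~\eqref{equ.cqtsjoj}, $F_vF_{2u+v+p}=F_{u+v+p}F_{u+v}+(-1)^{v+1}F_{u+p}F_u$, taking $v\mapsto mnq$, $u\mapsto nk$, and the auxiliary index $p\mapsto np$, which gives
\[
F_{mnq}F_{2nk+mnq+np}=F_{nk+mnq}F_{nk+mnq+np}-(-1)^{mnq}F_{nk}F_{nk+np}\,.
\]
Dividing through by $(F_{nk}F_{nk+np})(F_{nk+mnq}F_{nk+mnq+np})$ and using the hypothesis that $mnq$ is even (so $(-1)^{mnq}=1$) yields the basic identity
\[
\frac{F_{mnq}F_{2nk+mnq+np}}{(F_{nk}F_{nk+np})(F_{nk+mnq}F_{nk+mnq+np})}=\frac{1}{F_{nk}F_{nk+np}}-\frac{1}{F_{nk+mnq}F_{nk+mnq+np}}\,,
\]
which is the $p$-analogue of~\eqref{equ.vcnyyet} and reduces exactly to it when $p=0$ (a convenient consistency check on the substitution). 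In terms of $f(k)=1/(F_kF_{k+np})$ the right-hand side is precisely $f(nk)-f(nk+mnq)$.

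Next I would apply Lemma~\ref{finall} to this $f$. Multiplying the bracket $f(nk)-f(nk+mnq)$ by $\prod_{j=1}^{m-1}f(nk+jnq)=\prod_{j=1}^{m-1}\frac{1}{F_{nk+jnq}F_{nk+jnq+np}}$ and inserting the basic identity, the summand on the left of Lemma~\ref{finall} collapses to $F_{mnq}F_{2nk+mnq+np}\big/\prod_{j=0}^{m}F_{nk+jnq}F_{nk+jnq+np}$, the $j=0$ and $j=m$ factors in the denominator coming from the two fractions of the basic identity and the $j=1,\dots,m-1$ factors from the product supplied by the lemma. Hence
\[
F_{mnq}\sum_{k=1}^{N}\frac{F_{2nk+mnq+np}}{\prod_{j=0}^{m}F_{nk+jnq}F_{nk+jnq+np}}=\sum_{k=1}^{q}\prod_{j=0}^{m-1}\frac{1}{F_{nk+jnq}F_{nk+jnq+np}}-\sum_{k=1}^{q}\prod_{j=0}^{m-1}\frac{1}{F_{nk+nN+jnq}F_{nk+nN+jnq+np}}\,.
\]
Since $n\ge1$ forces every factor $F_{nk+nN+jnq}$ and $F_{nk+nN+jnq+np}$ to grow without bound, $f(nN)\to0$, so the last sum vanishes as $N\to\infty$ — this is exactly the hypothesis under which~\eqref{infall} applies, with $f_\infty=0$ — and dividing by $F_{mnq}$ gives the stated closed form.

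I do not expect a genuine obstacle; the only point requiring attention is the bookkeeping — checking that the substitution into~\eqref{equ.cqtsjoj} really produces the argument $2nk+mnq+np$ in the numerator, and that the $(m+1)$-fold product $\prod_{j=0}^{m}$ in the denominator telescopes against the two endpoint fractions exactly as in the $p=0$ theorems of the previous subsections. The parity hypothesis is used only to convert a $+$ into a $-$; the companion result for $mnq$ odd, together with its alternating analogue Theorem~\ref{thm.xbwcpz1}, would follow the identical route but with Lemma~\ref{finqeven} or Lemma~\ref{finqodd} replacing Lemma~\ref{finall}, the sign of the $F_{nk}F_{nk+np}$ term being absorbed into the $(-1)^{k-1}$ weighting.
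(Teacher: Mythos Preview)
Your proposal is correct and follows essentially the same route as the paper: the paper also substitutes $u=nk$, $v=mnq$ (and, implicitly, $p\mapsto np$) into Howard's identity~\eqref{equ.cqtsjoj}, divides through to obtain the basic identity~\eqref{equ.chzhm4b}/\eqref{equ.wz80v2j}, and then applies Lemma~\ref{finall} with $f(k)=1/(F_kF_{k+np})$ before letting $N\to\infty$. Your bookkeeping is accurate, including the check that the specialisation $p=0$ recovers~\eqref{equ.vcnyyet}.
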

\begin{thm}\label{thm.xbwcpz1}
If $m$, $n$, $q$ are positive integers such that $mnq$ is odd or $q$ is even; and $p$ is a non-negative integer, then
\[
\sum_{k = 1}^\infty  {\frac{{( - 1)^{k - 1} F_{2nk + mnq + np} }}{{\prod_{j = 0}^m {F_{nk + jnq} F_{nk + jnq + np} } }}}  = \frac{1}{{F_{mnq} }}\sum_{k = 1}^q {\frac{{( - 1)^{k - 1} }}{{\prod_{j = 0}^{m - 1} {F_{nk + jnq} F_{nk + jnq + np} } }}}\,. 
\]
\end{thm}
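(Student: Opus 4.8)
The plan is to obtain this as the alternating companion of Theorem~\ref{thm.byyca8f}, replacing the plain telescoping lemma by its alternating versions. The engine is Howard's identity~\eqref{equ.cqtsjoj},
\[
F_v F_{2u+v+p} = F_{u+v+p}F_{u+v} + (-1)^{v+1}F_{u+p}F_u\,.
\]
Dividing through by $F_u F_{u+p} F_{u+v} F_{u+v+p}$ gives the partial-fraction relation
\[
\frac{F_v F_{2u+v+p}}{F_u F_{u+p} F_{u+v} F_{u+v+p}} = \frac{1}{F_u F_{u+p}} + \frac{(-1)^{v+1}}{F_{u+v} F_{u+v+p}}\,.
\]
Applying this with $v=mnq$, $u=nk$, and with the parameter there set to $np$ (the $p$ of the theorem), and writing $f(k):=1/(F_kF_{k+np})$, one gets
\[
\frac{F_{mnq}F_{2nk+mnq+np}}{F_{nk}F_{nk+np}F_{nk+mnq}F_{nk+mnq+np}} = f(nk) + (-1)^{mnq+1}f(nk+mnq)\,,
\]
which is $f(nk)-f(nk+mnq)$ when $mnq$ is even and $f(nk)+f(nk+mnq)$ when $mnq$ is odd.

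Next I would multiply this relation by $(-1)^{k-1}\prod_{j=1}^{m-1}f(nk+jnq)$ and sum over $k$. The two Fibonacci factors coming from $f(nk)$, the two from $f(nk+mnq)$, and the factors from $f(nk+jnq)$ for $1\le j\le m-1$ together supply exactly the denominator $\prod_{j=0}^m F_{nk+jnq}F_{nk+jnq+np}$, so the left side becomes $(-1)^{k-1}F_{mnq}F_{2nk+mnq+np}/\prod_{j=0}^m F_{nk+jnq}F_{nk+jnq+np}$. On the right side I invoke Lemma~\ref{finqeven} when $q$ is even (here $q\mid mnq$ forces $mnq$ even, so the minus sign is in force) and Lemma~\ref{finqodd} when $mnq$ is odd (here $m$, $n$, $q$ are all odd, so $q$ is odd as that lemma requires, and the plus sign is in force). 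Both cases are handled uniformly by identity~\eqref{infallalt} with $f(k)=1/(F_kF_{k+np})$, taking the upper sign for $q$ even and the lower sign for $mnq$ odd.

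It then remains to pass to the limit $N\to\infty$: each factor $1/(F_{nk+nN+jnq}F_{nk+nN+jnq+np})$ in the boundary sum tends to $0$ since $F_r\to\infty$, so the boundary term vanishes, and dividing by $F_{mnq}$ yields the stated identity. The only delicate step is the sign bookkeeping --- one must verify that the two admissible parity regimes ($q$ even, and $mnq$ odd) are precisely the ones for which the sign $(-1)^{mnq+1}$ produced by~\eqref{equ.cqtsjoj} matches the parity hypothesis of the alternating telescoping lemma being used; everything else is a mechanical application of~\eqref{equ.cqtsjoj}, Lemmas~\ref{finqeven} and~\ref{finqodd}, and the empty-product convention.
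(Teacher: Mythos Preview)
Your proposal is correct and follows essentially the same approach as the paper's own proof: divide Howard's identity~\eqref{equ.cqtsjoj} by $F_uF_{u+p}F_{u+v}F_{u+v+p}$, specialise $u=nk$, $v=mnq$, $p\to np$ to obtain the partial-fraction relation (the paper's~\eqref{equ.chzhm4b}), and then apply the alternating telescoping identity~\eqref{infallalt} with $f(k)=1/(F_kF_{k+np})$, taking the upper sign when $q$ is even (so $mnq$ is even and~\eqref{equ.wz80v2j} applies) and the lower sign when $mnq$ is odd (so $q$ is odd and~\eqref{equ.hitwupq} applies). Your parity bookkeeping and the vanishing of the boundary term are exactly as in the paper, only spelled out in slightly more detail.
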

\subsubsection*{Proof of Theorem \ref{thm.byyca8f} and Theorem \ref{thm.xbwcpz1}}
By dividing through the identity~\eqref{equ.cqtsjoj} by $F_uF_{u+p}F_{u+v}F_{u+v+p}$ and setting $u=nk$ and $v=mnq$, the following identity is established
\begin{equation}\label{equ.chzhm4b}
\frac{{F_{mnq} F_{2nk + mnq + np} }}{{F_{nk} F_{nk + np} F_{nk + mnq} F_{nk + mnq + np} }} = \frac{1}{{F_{nk} F_{nk + np} }} + \frac{{( - 1)^{mnq + 1} }}{{F_{nk + mnq} F_{nk + mnq + np} }}\,,
\end{equation}
so that
\begin{equation}\label{equ.wz80v2j}
\begin{split}
&\frac{{F_{mnq} F_{2nk + mnq + np} }}{{F_{nk} F_{nk + np} F_{nk + mnq} F_{nk + mnq + np} }}\\ 
&\qquad= \frac{1}{{F_{nk} F_{nk + np} }} - \frac{1}{{F_{nk + mnq} F_{nk + mnq + np} }},\quad\mbox{$mnq$ even}
\end{split}
\end{equation}
and
\begin{equation}\label{equ.hitwupq}
\begin{split}
&\frac{{F_{mnq} F_{2nk + mnq + np} }}{{F_{nk} F_{nk + np} F_{nk + mnq} F_{nk + mnq + np} }}\\
&\qquad = \frac{1}{{F_{nk} F_{nk + np} }} + \frac{1}{{F_{nk + mnq} F_{nk + mnq + np} }},\quad\mbox{$mnq$ odd}\,.
\end{split}
\end{equation}
From~\eqref{equ.wz80v2j} and $f(k)=1/(F_kF_{k+np})$ in Lemma~\ref{infall} we obtain the finite summation identity
\begin{equation}
\begin{split}
&F_{mnq} \sum_{k = 1}^N {\left[ {\frac{{F_{2nk + mnq + np} }}{{\prod_{j = 0}^m {F_{nk + jnq} F_{nk + jnq + np} } }}} \right]}\\ 
&\qquad = \sum_{k = 1}^q {\left[ {\frac{1}{{\prod_{j = 0}^{m - 1} {F_{nk + jnq} F_{nk + jnq + np} } }}} \right]}\\
&\qquad\quad  - \sum_{k = 1}^q {\left[ {\frac{1}{{\prod_{j = 0}^{m-1} {F_{nk + nN + jnq} F_{nk + nN + jnq + np} } }}} \right]}\,, 
\end{split}
\end{equation}
from which Theorem~\ref{thm.byyca8f} follows in the limit as $N$ approaches infinity. If $q$ is even, then the statement of Theorem~\ref{thm.xbwcpz1} follows from~\eqref{equ.wz80v2j} and identity~\eqref{infallalt} with the upper sign or from~\eqref{equ.hitwupq} and identity~\eqref{infallalt} with the lower sign if $mnq$ is odd.

\subsection{Sums with $L_{nk}L_{nk+np}L_{nk+nq}L_{nk+nq+np}L_{nk+2nq}L_{nk+2nq+np}$$\cdots$ $L_{nk+mnq}L_{nk+mnq+np}$ in the denominator}
The results in this section are the Lucas versions of the results in the preceding section. The basic identities are
\begin{equation}
\frac{{5F_{mnq} F_{2nk + mnq + np} }}{{\prod_{j = 0}^m {L_{nk + jnq} L_{nk + jnq + np} } }} = \frac{1}{{\prod_{j = 0}^{m - 1} {L_{nk + jnq} L_{nk + jnq + np} } }} + \frac{{( - 1)^{mnq + 1} }}{{\prod_{j = 1}^m {L_{nk + jnq} L_{nk + jnq + np} } }}\,,
\end{equation}
and, if $mnq$ is even
\begin{equation}
\begin{split}
&5F_{mnq} \sum_{k = 1}^N {\left[ {\frac{{F_{2nk + mnq + np} }}{{\prod_{j = 0}^m {L_{nk + jnq} L_{nk + jnq + np} } }}} \right]}\\ 
&\qquad\quad = \sum_{k = 1}^q {\left[ {\frac{1}{{\prod_{j = 0}^{m - 1} {L_{nk + jnq} L_{nk + jnq + np} } }}} \right]}\\
&\qquad\qquad  - \sum_{k = 1}^q {\left[ {\frac{1}{{\prod_{j = 0}^{m-1} {L_{nk + nN + jnq} L_{nk + nN + jnq + np} } }}} \right]}\,, 
\end{split}
\end{equation}
while if $mnq$ is odd or $q$ is even:
\begin{equation}
\begin{split}
&5F_{mnq} \sum_{k = 1}^N {\left[ {\frac{{(-1)^{k-1}F_{2nk + mnq + np} }}{{\prod_{j = 0}^m {L_{nk + jnq} L_{nk + jnq + np} } }}} \right]}\\ 
&\qquad\quad = \sum_{k = 1}^q {\left[ {\frac{(-1)^{k-1}}{{\prod_{j = 0}^{m - 1} {L_{nk + jnq} L_{nk + jnq + np} } }}} \right]}\\
&\qquad\qquad  +(-1)^{N-1} \sum_{k = 1}^q {\left[ {\frac{(-1)^{k-1}}{{\prod_{j = 0}^{m-1} {L_{nk + nN + jnq} L_{nk + nN + jnq + np} } }}} \right]}\,. 
\end{split}
\end{equation}
\begin{thm}\label{thm.ckvmm33}
If $m$, $n$, $q$ are positive integers such that $mnq$ is even; and $p$ is a non-negative integer, then
\[
\sum_{k = 1}^\infty  {\frac{{F_{2nk + mnq + np} }}{{\prod_{j = 0}^m {L_{nk + jnq} L_{nk + jnq + np} } }}}  = \frac{1}{{5F_{mnq} }}\sum_{k = 1}^q {\frac{1}{{\prod_{j = 0}^{m - 1} {L_{nk + jnq} L_{nk + jnq + np} } }}}\,. 
\]

\end{thm}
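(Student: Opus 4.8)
The plan is to proceed exactly as in the proof of Theorem~\ref{thm.byyca8f}, but with Lucas numbers in place of Fibonacci numbers in the denominators; the engine will be the Lucas companion of Howard's identity~\eqref{equ.cqtsjoj}, namely
\[
5F_vF_{2u+v+p} = L_{u+v+p}L_{u+v} + (-1)^{v+1}L_{u+p}L_u.
\]
First I would verify this identity: using the product-to-sum formulas~\eqref{equ.ztsb3uk} and~\eqref{equ.q79u3q4} one writes $L_{u+v+p}L_{u+v} = L_{2u+2v+p} + (-1)^{u+v}L_p$, $L_{u+p}L_u = L_{2u+p} + (-1)^{u}L_p$ and $5F_vF_{2u+v+p} = L_{2u+2v+p} - (-1)^{v}L_{2u+p}$, and the claimed relation drops out once the two $L_p$ terms cancel. (At $p=0$ this is just~\eqref{equ.wodrq78}.) Dividing through by $L_uL_{u+p}L_{u+v}L_{u+v+p}$ and putting $u=nk$, $v=mnq$, with $np$ in the role of $p$, produces the basic single-fraction identity quoted just before the theorem,
\[
\frac{5F_{mnq}F_{2nk+mnq+np}}{L_{nk}L_{nk+np}L_{nk+mnq}L_{nk+mnq+np}} = \frac{1}{L_{nk}L_{nk+np}} + \frac{(-1)^{mnq+1}}{L_{nk+mnq}L_{nk+mnq+np}}.
\]

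Since $mnq$ is even the sign is $-1$, so the right-hand side is precisely $f(nk)-f(nk+mnq)$ with $f(k)=1/(L_kL_{k+np})$. Feeding this $f$ into the telescoping identity of Lemma~\ref{finall} and carrying the factor $5F_{mnq}$ along, the product $\bigl[f(nk)-f(nk+mnq)\bigr]\prod_{j=1}^{m-1}f(nk+jnq)$ collapses to $\dfrac{5F_{mnq}F_{2nk+mnq+np}}{\prod_{j=0}^{m}L_{nk+jnq}L_{nk+jnq+np}}$, because the two endpoint Lucas pairs ($j=0$ and $j=m$) supplied by the fraction merge with the interior product $\prod_{j=1}^{m-1}$ to form the full product $\prod_{j=0}^{m}$. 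This yields, for every $N\ge q$, the finite summation identity
\[
5F_{mnq}\sum_{k=1}^{N}\frac{F_{2nk+mnq+np}}{\prod_{j=0}^{m}L_{nk+jnq}L_{nk+jnq+np}} = \sum_{k=1}^{q}\frac{1}{\prod_{j=0}^{m-1}L_{nk+jnq}L_{nk+jnq+np}} - \sum_{k=1}^{q}\frac{1}{\prod_{j=0}^{m-1}L_{nk+nN+jnq}L_{nk+nN+jnq+np}},
\]
which is the very identity displayed before the theorem.

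To finish I would let $N\to\infty$. The series on the left has positive terms and partial sums bounded by the right-hand side, so it converges; and in the last sum on the right each of the $q$ summands tends to $0$, since every Lucas factor $L_{nk+nN+jnq}$ and $L_{nk+nN+jnq+np}$ grows without bound. Passing to the limit and dividing by $5F_{mnq}$ (legitimate because $mnq\ge 2$, hence $F_{mnq}\ge 1$) gives the asserted formula. The only step calling for any real thought is verifying the Lucas Howard-type identity in the first paragraph; everything after that is the same telescoping scheme used repeatedly in the paper, specialized to $f(k)=1/(L_kL_{k+np})$.
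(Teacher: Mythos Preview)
Your proof is correct and follows essentially the same route as the paper: derive the Lucas analogue of Howard's identity~\eqref{equ.cqtsjoj}, divide through to obtain the two-term relation for $f(k)=1/(L_kL_{k+np})$, apply Lemma~\ref{finall}, and let $N\to\infty$. The paper is terser---it simply declares the section to be the Lucas version of Section~\ref{sec.cp47u43} and records the resulting identity with the full products already multiplied in---but the substance is identical, and your explicit verification of the Lucas--Howard identity fills in the one step the paper leaves to the reader.
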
\label{thm.v7eumk9}
\begin{thm}
If $m$, $n$, $q$ are positive integers such that $mnq$ is odd or $q$ is even; and $p$ is a non-negative integer, then
\[
\sum_{k = 1}^\infty  {\frac{{( - 1)^{k - 1} F_{2nk + mnq + np} }}{{\prod_{j = 0}^m {L_{nk + jnq} L_{nk + jnq + np} } }}}  = \frac{1}{{5F_{mnq} }}\sum_{k = 1}^q {\frac{{( - 1)^{k - 1} }}{{\prod_{j = 0}^{m - 1} {L_{nk + jnq} L_{nk + jnq + np} } }}}\,. 
\]

\end{thm}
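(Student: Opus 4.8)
The plan is to imitate, line for line, the proof of Theorem~\ref{thm.xbwcpz1}, with Lucas numbers replacing Fibonacci numbers in the denominator; the only structural change is that the pointwise identity feeding the telescoping now comes from the product-to-sum formulas~\eqref{equ.ztsb3uk} and~\eqref{equ.q79u3q4} rather than from~\eqref{equ.cqtsjoj}. Concretely, I would first record
\[
\frac{5F_{mnq}F_{2nk+mnq+np}}{L_{nk}L_{nk+np}L_{nk+mnq}L_{nk+mnq+np}} = \frac{1}{L_{nk}L_{nk+np}} - \frac{(-1)^{mnq}}{L_{nk+mnq}L_{nk+mnq+np}}\,,
\]
which is just the basic identity displayed at the beginning of this subsection after the common factor $\prod_{j=1}^{m-1}L_{nk+jnq}L_{nk+jnq+np}$ is cleared. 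To verify it directly, expand $L_{nk+mnq}L_{nk+mnq+np} - (-1)^{mnq}L_{nk}L_{nk+np}$ by means of~\eqref{equ.ztsb3uk}: the two terms proportional to $L_{np}$ cancel, leaving $L_{2nk+2mnq+np} - (-1)^{mnq}L_{2nk+np}$, which is precisely $5F_{mnq}F_{2nk+mnq+np}$ by~\eqref{equ.q79u3q4}.

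Next, put $f(k) = 1/(L_kL_{k+np})$ and multiply the displayed identity through by $\prod_{j=1}^{m-1}1/(L_{nk+jnq}L_{nk+jnq+np})$, obtaining
\[
\Bigl[f(nk) \mp f(nk+mnq)\Bigr]\prod_{j=1}^{m-1}f(nk+jnq) = \frac{5F_{mnq}F_{2nk+mnq+np}}{\prod_{j=0}^m L_{nk+jnq}L_{nk+jnq+np}}\,,
\]
where the upper sign holds when $mnq$ is even and the lower sign when $mnq$ is odd. If $q$ is even, then $mnq$ is even, the bracket is $f(nk)-f(nk+mnq)$, and Lemma~\ref{finqeven} applies; if $mnq$ is odd, then $q$ is odd, the bracket is $f(nk)+f(nk+mnq)$, and Lemma~\ref{finqodd} applies. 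In either case one is led to the finite summation identity of the form $5F_{mnq}\sum_{k=1}^N(\cdots) = \sum_{k=1}^q(\cdots) + (-1)^{N-1}\sum_{k=1}^q(\cdots)$ recorded just before the statement.

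Finally, let $N\to\infty$. Since $L_r\to\infty$, each of the $q$ summands of the trailing sum tends to $0$, so that term drops out notwithstanding the bounded factor $(-1)^{N-1}$; equivalently, since $f(2nN)$ and $f((2N-1)n)$ share the common limit $0$, identity~\eqref{infallalt} may be invoked directly, with the upper sign for $q$ even and the lower sign for $q$ odd. Dividing by $5F_{mnq}$ yields the asserted formula. There is no genuine obstacle; the only point requiring care is the parity bookkeeping — $q$ even $\Rightarrow mnq$ even $\Rightarrow$ upper-sign branch, while $mnq$ odd $\Rightarrow q$ odd $\Rightarrow$ lower-sign branch — so that the correct case of Lemma~\ref{finqeven}/Lemma~\ref{finqodd} and of~\eqref{infallalt} is selected.
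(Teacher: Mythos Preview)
Your proposal is correct and follows essentially the same route as the paper: establish the pointwise identity
\[
\frac{5F_{mnq}F_{2nk+mnq+np}}{L_{nk}L_{nk+np}L_{nk+mnq}L_{nk+mnq+np}}=\frac{1}{L_{nk}L_{nk+np}}-\frac{(-1)^{mnq}}{L_{nk+mnq}L_{nk+mnq+np}},
\]
set $f(k)=1/(L_kL_{k+np})$, apply Lemma~\ref{finqeven} when $q$ is even and Lemma~\ref{finqodd} when $mnq$ is odd, and let $N\to\infty$. Your derivation of the pointwise identity from~\eqref{equ.ztsb3uk} and~\eqref{equ.q79u3q4} is in fact more explicit than what the paper provides (the paper simply records the identity as the Lucas analogue of~\eqref{equ.chzhm4b} without spelling out its source), and your parity bookkeeping is handled correctly.
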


\subsection{Evaluation of other sums}

\begin{thm}\label{thm.jq353ph}
If $m$, $n$ and $q$ are positive integers, then 
\[
\sum_{k = 1}^\infty  {\frac{{( - 1)^{nk - 1} F_{2nk + mnq + 2} \prod_{j = 1}^{m - 1} {F_{nk + jnq + 1}^2 } }}{{\prod_{j = 0}^m {F_{nk + jnq} F_{nk + jnq + 2} } }}}  = \frac{q}{{F_{mnq} }} - \frac{1}{{F_{mnq} }}\sum_{k = 1}^q {\prod_{j = 0}^{m - 1} {\frac{{F_{nk + jnq + 1}^2 }}{{F_{nk + jnq} F_{nk + jnq + 2} }}} }\,, 
\]
so that
\begin{equation}
\sum_{k = 1}^\infty  {\frac{{F_{2nk + mnq + 2} \prod_{j = 1}^{m - 1} {F_{nk + jnq + 1}^2 } }}{{\prod_{j = 0}^m {F_{nk + jnq} F_{nk + jnq + 2} } }}}  = \frac{1}{{F_{mnq} }}\sum_{k = 1}^q {\prod_{j = 0}^{m - 1} {\frac{{F_{nk + jnq + 1}^2 }}{{F_{nk + jnq} F_{nk + jnq + 2} }}} }-\frac{q}{{F_{mnq} }},\quad\mbox{$n$ even}
\end{equation}
and
\begin{equation}
\sum_{k = 1}^\infty  {\frac{{( - 1)^{k - 1} F_{2nk + mnq + 2} \prod_{j = 1}^{m - 1} {F_{nk + jnq + 1}^2 } }}{{\prod_{j = 0}^m {F_{nk + jnq} F_{nk + jnq + 2} } }}}  = \frac{q}{{F_{mnq} }} - \frac{1}{{F_{mnq} }}\sum_{k = 1}^q {\prod_{j = 0}^{m - 1} {\frac{{F_{nk + jnq + 1}^2 }}{{F_{nk + jnq} F_{nk + jnq + 2} }}} },\quad\mbox{$n$ odd}\,.
\end{equation}

\end{thm}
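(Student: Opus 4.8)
The plan is to follow the template already used for Theorems~\ref{thm.vn8ph53}--\ref{thm.hxvgf6w}: take the sequence $f(k)=F_{k+1}^2/(F_kF_{k+2})$, manufacture a two-term identity of the form $f(nk)-f(nk+mnq)=(\text{explicit expression})$, substitute it into Lemma~\ref{finall} (equivalently into~\eqref{infall}), and pass to the limit $N\to\infty$. The only genuinely new ingredient is the two-term identity, which I would get from Cassini's identity together with Howard's identity~\eqref{equ.cqtsjoj}.

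First I would rewrite $f$. Cassini's identity $F_{w+1}^2=F_wF_{w+2}+(-1)^w$ gives $f(w)=1+(-1)^w/(F_wF_{w+2})$, so that
\[
f(nk)-f(nk+mnq)=(-1)^{nk}\left(\frac{1}{F_{nk}F_{nk+2}}-\frac{(-1)^{mnq}}{F_{nk+mnq}F_{nk+mnq+2}}\right).
\]
Next I would invoke identity~\eqref{equ.cqtsjoj} with $u=nk$, $v=mnq$, $p=2$, which reads $F_{mnq}F_{2nk+mnq+2}=F_{nk+mnq}F_{nk+mnq+2}+(-1)^{mnq+1}F_{nk}F_{nk+2}$; putting the bracket above over a common denominator and using this turns the previous display into
\[
f(nk)-f(nk+mnq)=(-1)^{nk}\,\frac{F_{mnq}\,F_{2nk+mnq+2}}{F_{nk}F_{nk+2}F_{nk+mnq}F_{nk+mnq+2}}.
\]
Multiplying through by $\prod_{j=1}^{m-1}f(nk+jnq)=\prod_{j=1}^{m-1}F_{nk+jnq+1}^2/(F_{nk+jnq}F_{nk+jnq+2})$ then identifies the summand of the theorem (up to the factor $-F_{mnq}$, since $(-1)^{nk}=-(-1)^{nk-1}$) with $[f(nk)-f(nk+mnq)]\prod_{j=1}^{m-1}f(nk+jnq)$.

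With this in hand I would apply~\eqref{infall}. From the limit relations~\eqref{equ.y4ibpqi} one has $f(nN)=(F_{nN+1}/F_{nN})(F_{nN+1}/F_{nN+2})\to\phi\cdot\phi^{-1}=1$, so $f_\infty=1$ and $f_\infty{}^mq=q$. Hence
\[
F_{mnq}\sum_{k=1}^\infty(-1)^{nk}\frac{F_{2nk+mnq+2}\prod_{j=1}^{m-1}F_{nk+jnq+1}^2}{\prod_{j=0}^mF_{nk+jnq}F_{nk+jnq+2}}=\sum_{k=1}^q\prod_{j=0}^{m-1}\frac{F_{nk+jnq+1}^2}{F_{nk+jnq}F_{nk+jnq+2}}-q.
\]
Dividing by $F_{mnq}$ and replacing $(-1)^{nk}$ by $-(-1)^{nk-1}$ gives the first displayed identity of the theorem; the two subsequent ones follow immediately on specializing $(-1)^{nk-1}$ to $-1$ when $n$ is even and to $(-1)^{k-1}$ when $n$ is odd.

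I do not expect a serious obstacle; the delicate part is merely the sign bookkeeping in the chain $(-1)^{nk}\leftrightarrow(-1)^{nk-1}$ and in matching the parity factor $(-1)^{mnq+1}$ of Howard's identity~\eqref{equ.cqtsjoj} against $-(-1)^{mnq}$. Convergence of the infinite series needs no separate argument: by Lemma~\ref{finall} the partial sums equal an explicit expression whose limit exists by~\eqref{equ.y4ibpqi}. Alternatively, one may present the argument with the finite identity of Lemma~\ref{finall} followed by an explicit passage to the limit, exactly as in the proof of Theorem~\ref{thm.vn8ph53}.
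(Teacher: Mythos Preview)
Your proposal is correct and follows essentially the same approach as the paper: the same choice $f(k)=F_{k+1}^2/(F_kF_{k+2})$, the same application of Lemma~\ref{finall}/identity~\eqref{infall}, and the same limit $f_\infty=1$. The only cosmetic difference is that the paper derives the key two-term identity~\eqref{equ.hro9n7d} directly from identity~\eqref{equ.yr8t5vs} (with $u=nk+1$, $v=nk+mnq+1$), whereas you obtain the equivalent relation via Cassini's identity combined with Howard's identity~\eqref{equ.cqtsjoj}; both routes yield the same intermediate formula and the rest of the argument is identical.
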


\begin{thm}\label{thm.tpgmuma}
If $m$, $n$ and $q$ are positive integers such that $n$ is odd and $q$ is even, then 
\[
\sum_{k = 1}^\infty  {\left[ {\frac{{F_{2nk + mnq + 2} \prod_{j = 1}^{m - 1} {F_{nk + jnq + 1}^2 } }}{{\prod_{j = 0}^m {F_{nk + jnq} F_{nk + jnq + 2} } }}} \right]}  = \frac{1}{{F_{mnq} }}\sum_{k = 1}^q {\left[ {\prod_{j = 0}^{m - 1} {\frac{{( - 1)^k F_{nk + jnq + 1}^2 }}{{F_{nk + jnq} F_{nk + jnq + 2} }}} } \right]}\,. 
\]

\end{thm}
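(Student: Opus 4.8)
The plan is to treat Theorem~\ref{thm.tpgmuma} as the alternating companion of Theorem~\ref{thm.jq353ph}, in the same way that Theorem~\ref{thm.wxp5pic} is the alternating companion of Theorem~\ref{thm.xlspejf}: establish one telescoping identity for the sequence $f(k)=F_{k+1}^2/(F_kF_{k+2})$ and then substitute it into~\eqref{infallalt} instead of into~\eqref{infall}. First I would assemble the basic identity from two ingredients. From~\eqref{equ.ded0k7c} with $u=a+1$, $v=1$ comes the Cassini-type relation $F_aF_{a+2}=F_{a+1}^2+(-1)^{a+1}$, equivalently $f(a)=1+(-1)^a/(F_aF_{a+2})$; and from~\eqref{equ.cqtsjoj} with $u=nk$, $v=mnq$, $p=2$ (equivalently~\eqref{equ.tokbcvq} with $t=mnq$, $u=nk$, $v=nk+2$) comes
\[
F_{mnq}F_{2nk+mnq+2}=F_{nk+mnq}F_{nk+mnq+2}-(-1)^{mnq}F_{nk}F_{nk+2}\,.
\]
Dividing the last display by $F_{nk}F_{nk+2}F_{nk+mnq}F_{nk+mnq+2}$ and using the Cassini relation to trade the products $F_aF_{a+2}$ for the squares $F_{a+1}^2$ yields the basic identity
\[
f(nk)-f(nk+mnq)=(-1)^{nk}\,\frac{F_{mnq}F_{2nk+mnq+2}}{F_{nk}F_{nk+2}F_{nk+mnq}F_{nk+mnq+2}}\,,
\]
which is the identity underlying Theorem~\ref{thm.jq353ph}, written here with the opposite overall sign.

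Next I would multiply this identity through by $\prod_{j=1}^{m-1}f(nk+jnq)$. Since $f(nk+jnq)=F_{nk+jnq+1}^2/(F_{nk+jnq}F_{nk+jnq+2})$, the left-hand side becomes $[f(nk)-f(nk+mnq)]\prod_{j=1}^{m-1}f(nk+jnq)$ and the right-hand side becomes $(-1)^{nk}F_{mnq}$ times the summand $\frac{F_{2nk+mnq+2}\prod_{j=1}^{m-1}F_{nk+jnq+1}^2}{\prod_{j=0}^{m}F_{nk+jnq}F_{nk+jnq+2}}$ that appears in the theorem. Then I would invoke~\eqref{infallalt} with this $f$: its hypotheses hold since $f(k)\to 1$ as $k\to\infty$ (immediate from Binet's formula), so $f(2Nn)$ and $f((2N-1)n)$ share the limit $1$. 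As $q$ is even I take the upper sign in~\eqref{infallalt}, so its summand is $(-1)^{k-1}[f(nk)-f(nk+mnq)]\prod_{j=1}^{m-1}f(nk+jnq)$; and as $n$ is odd, $(-1)^{nk}=(-1)^k$, so the weight $(-1)^{k-1}$ times the factor $(-1)^{nk}$ coming from the basic identity collapses to the constant $(-1)^{2k-1}=-1$. Thus the left side of~\eqref{infallalt} equals $-F_{mnq}$ times the series of the theorem, while its right side equals $\sum_{k=1}^{q}(-1)^{k-1}\prod_{j=0}^{m-1}\frac{F_{nk+jnq+1}^2}{F_{nk+jnq}F_{nk+jnq+2}}$. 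Dividing by $-F_{mnq}$ and rewriting $-(-1)^{k-1}=(-1)^{k}$ gives the asserted closed form.

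Since the whole argument is one algebraic identity followed by a substitution, there is no deep obstacle; the delicate part is the sign bookkeeping. One uses $n$ odd precisely to turn $(-1)^{nk}$ into $(-1)^{k}$, and one uses $q$ even twice: to make $(-1)^{mnq}=1$, so that the single-term identity has the difference form that~\eqref{infallalt} requires, and to select the upper sign in~\eqref{infallalt}. Mishandling either parity would break the telescoping. The only other point that needs care is confirming that the boundary terms in~\eqref{infallalt} vanish, i.e. that $f$ has a common limiting value along $2Nn$ and $(2N-1)n$ as $N\to\infty$; this is clear here with common value $1$, but it is where convergence of the scheme is genuinely used.
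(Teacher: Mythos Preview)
Your proof is correct and follows essentially the same route as the paper: both establish the single-term identity $f(nk)-f(nk+mnq)=(-1)^{nk}F_{mnq}F_{2nk+mnq+2}/(F_{nk}F_{nk+2}F_{nk+mnq}F_{nk+mnq+2})$ for $f(k)=F_{k+1}^2/(F_kF_{k+2})$ and then feed it into~\eqref{infallalt} with the upper sign. The only cosmetic difference is that the paper obtains the basic identity directly from~\eqref{equ.yr8t5vs} (with $u=nk+1$, $v=nk+mnq+1$), whereas you reconstruct it by combining Cassini's relation with~\eqref{equ.cqtsjoj}; one small remark is that this basic identity actually holds for all parities of $mnq$ (the factor $(-1)^{mnq}$ cancels in your derivation), so $q$ even is needed only once, to select the upper sign in~\eqref{infallalt}.
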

\subsubsection*{Proof of Theorem~\eqref{thm.jq353ph} and Theorem~\eqref{thm.tpgmuma}}
Dividing through the identity~\eqref{equ.yr8t5vs} by $F_{v+1}F_{v-1}F_{u+1}F_{u-1}$ and setting $u=nk+1$ and $v=nk+mnq+1$ we obtain the identity
\begin{equation}\label{equ.hro9n7d}
\frac{{( - 1)^{nk - 1} F_{mnq} F_{2nk + mnq + 2} }}{{F_{nk} F_{nk + 2} F_{nk + mnq} F_{nk + mnq + 2} }} = \frac{{F_{nk + mnq + 1}^2 }}{{F_{nk + mnq + 2} F_{nk + mnq} }} - \frac{{F_{nk + 1}^2 }}{{F_{nk + 2} F_{nk} }}\,.
\end{equation}
With $f(k)=F_{k+1}^2/(F_kF_{k+2})$ in Lemma~\ref{finall} and use of the identity~\eqref{equ.hro9n7d} we get the finite summation identity
\begin{equation}
\begin{split}
&F_{mnq} \sum_{k = 1}^N {\left[ {\frac{{( - 1)^{nk - 1} F_{2nk + mnq + 2} \prod_{j = 1}^{m - 1} {F_{nk + jnq + 1}^2 } }}{{\prod_{j = 0}^m {F_{nk + jnq} F_{nk + jnq + 2} } }}} \right]}\\ 
&\qquad\qquad = \sum_{k = 1}^q {\left[ {\prod_{j = 0}^{m - 1} {\frac{{F_{nk + nN + jnq + 1}^2 }}{{F_{nk + nN + jnq} F_{nk + nN + jnq + 2} }}} } \right]}\\
&\qquad\qquad\qquad  - \sum_{k = 1}^q {\left[ {\prod_{j = 0}^{m - 1} {\frac{{F_{nk + jnq + 1}^2 }}{{F_{nk + jnq} F_{nk + jnq + 2} }}} } \right]}\,, 
\end{split}
\end{equation}
from which Theorem~\eqref{thm.jq353ph} follows in the limit as $N$ approaches infinity. Theorem~\eqref{thm.tpgmuma} follows from the identity~\eqref{equ.hro9n7d} and taking \mbox{$f(k)=F_{k+1}^2/(F_kF_{k+2})$} in identity~\eqref{infallalt} with the upper sign.
\begin{thm}
\[
\sum_{k = 1}^\infty  {\frac{{F_{2k + 3} }}{{F_k^4 F_{k + 1}^3 F_{k + 2}^3 F_{k+3}^4 }}}  = \frac{1}{{128}},\quad\sum_{k = 1}^\infty  {\frac{{F_{2k + 3} }}{{L_k^4 L_{k + 1}^3 L_{k + 2}^3 L_{k+3}^4 }}}  = \frac{1}{{829440}}\,.
\]

\end{thm}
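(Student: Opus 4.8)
The plan is to reduce both sums, by means of a single elementary Fibonacci--Lucas identity, to the telescoping formula~\eqref{infall} of Lemma~\ref{finall} taken with $f(k)=1/F_k^4$ (respectively $f(k)=1/L_k^4$), $n=1$, $m=3$ and $q=1$.

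First I would record the factorization $F_{k+3}^4-F_k^4=(F_{k+3}^2-F_k^2)(F_{k+3}^2+F_k^2)$. Since $F_{k+3}=2F_{k+1}+F_k$, we get $F_{k+3}-F_k=2F_{k+1}$ and $F_{k+3}+F_k=2F_{k+2}$, hence $F_{k+3}^2-F_k^2=4F_{k+1}F_{k+2}$; and identity~\eqref{equ.ded0k7c} with $u=k+3$, $v=k$ (so $(-1)^{u+v-1}=1$) gives $F_{k+3}^2+F_k^2=F_3F_{2k+3}=2F_{2k+3}$. Therefore
\[
F_{k+3}^4-F_k^4=8\,F_{k+1}F_{k+2}F_{2k+3}.
\]
The identical argument with $L$ in place of $F$, using $L_{k+3}=2L_{k+1}+L_k$ and identity~\eqref{equ.wodrq78} with $u=k+3$, $v=k$, yields $L_{k+3}^2-L_k^2=4L_{k+1}L_{k+2}$ and $L_{k+3}^2+L_k^2=5F_3F_{2k+3}=10F_{2k+3}$, whence $L_{k+3}^4-L_k^4=40\,F_{2k+3}L_{k+1}L_{k+2}$.

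Dividing these two identities by $F_k^4F_{k+1}^4F_{k+2}^4F_{k+3}^4$ and $L_k^4L_{k+1}^4L_{k+2}^4L_{k+3}^4$ respectively gives
\[
\frac{F_{2k+3}}{F_k^4F_{k+1}^3F_{k+2}^3F_{k+3}^4}=\frac18\left(\frac{1}{F_k^4F_{k+1}^4F_{k+2}^4}-\frac{1}{F_{k+1}^4F_{k+2}^4F_{k+3}^4}\right),
\]
and the analogous relation for the Lucas sum with $\tfrac18$ replaced by $\tfrac1{40}$. By~\eqref{equ.tejlmvh}, the expression in parentheses is precisely $\left[f(k)-f(k+3)\right]f(k+1)f(k+2)$ for $f(k)=1/F_k^4$, i.e.\ the summand of Lemma~\ref{finall} with $n=1$, $m=3$, $q=1$.

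Finally I would invoke~\eqref{infall}. Since $f_\infty=\lim_{N\to\infty}1/F_N^4=0$, the right-hand side collapses to $\sum_{k=1}^{1}\prod_{j=0}^{2}f(k+j)=f(1)f(2)f(3)=1/(F_1^4F_2^4F_3^4)=1/16$, so the Fibonacci sum equals $\tfrac18\cdot\tfrac1{16}=\tfrac1{128}$; and with $f(k)=1/L_k^4$ (again $f_\infty=0$) the Lucas sum equals $\tfrac1{40}\cdot\dfrac{1}{L_1^4L_2^4L_3^4}=\tfrac1{40}\cdot\dfrac{1}{81\cdot256}=\dfrac1{829440}$. The only step requiring any insight is spotting that $F_{k+3}^4-F_k^4$ and $L_{k+3}^4-L_k^4$ both carry the factor $F_{2k+3}$; once that factorization is in hand the rest is a mechanical application of the telescoping lemma, so I do not anticipate a genuine obstacle.
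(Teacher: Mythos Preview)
Your proof is correct and entirely in the spirit of the paper's methodology: the paper states this theorem without an explicit proof, but your derivation---factoring $F_{k+3}^4-F_k^4$ (resp.\ $L_{k+3}^4-L_k^4$) via identities~\eqref{equ.ded0k7c} and~\eqref{equ.wodrq78} and then applying the telescoping formula~\eqref{infall} with $f(k)=1/F_k^4$ (resp.\ $1/L_k^4$), $m=3$, $n=q=1$---is precisely the kind of argument the paper's machinery is built for. All intermediate identities and the final numerical evaluations check out.
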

\begin{thm}
\[
\sum_{k = 1}^\infty  {\frac{{F_{3k + 1} F_{3k + 2} F_{6k + 3} }}{{F_{3k}^4 F_{3k + 3}^4 }}}  = \frac{1}{{128}},\quad\sum_{k = 1}^\infty  {\frac{{L_{3k + 1} L_{3k + 2} F_{6k + 3} }}{{L_{3k}^4 L_{3k + 3}^4 }}}  = \frac{1}{{10240}}\,.
\]

\end{thm}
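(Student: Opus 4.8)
The plan is to write each summand as a constant multiple of a telescoping difference $f(k)-f(k+1)$ and then invoke the telescoping identity~\eqref{equ.gy1asjs} with $q=1$; the correct telescoper turns out to be $f(k)=1/F_{3k}^4$ (respectively $f(k)=1/L_{3k}^4$), i.e. a \emph{fourth} power, not the second powers used elsewhere in the paper.

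First I would establish the algebraic identity
\[
F_{3k+3}^4-F_{3k}^4 = 8\,F_{3k+1}F_{3k+2}F_{6k+3}\,.
\]
Factor the left-hand side as a difference of squares, $F_{3k+3}^4-F_{3k}^4=\bigl(F_{3k+3}^2-F_{3k}^2\bigr)\bigl(F_{3k+3}^2+F_{3k}^2\bigr)$. For the first factor, the recurrence gives $F_{3k+3}=2F_{3k+1}+F_{3k}$, hence $F_{3k+3}-F_{3k}=2F_{3k+1}$ and $F_{3k+3}+F_{3k}=2F_{3k+2}$, so that $F_{3k+3}^2-F_{3k}^2=4F_{3k+1}F_{3k+2}$. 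For the second factor, apply identity~\eqref{equ.ded0k7c} with $u=3k+3$, $v=3k$ (so $u+v$ is odd and the sign is $+1$), which gives $F_{3k+3}^2+F_{3k}^2=F_3F_{6k+3}=2F_{6k+3}$. Multiplying the two factors yields the claim. The Lucas analogue
\[
L_{3k+3}^4-L_{3k}^4 = 40\,L_{3k+1}L_{3k+2}F_{6k+3}
\]
follows identically: $L_{3k+3}-L_{3k}=2L_{3k+1}$, $L_{3k+3}+L_{3k}=2L_{3k+2}$, so $L_{3k+3}^2-L_{3k}^2=4L_{3k+1}L_{3k+2}$, while identity~\eqref{equ.wodrq78} with $u=3k+3$, $v=3k$ gives $L_{3k+3}^2+L_{3k}^2=5F_3F_{6k+3}=10F_{6k+3}$.

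Dividing the first identity by $F_{3k}^4F_{3k+3}^4$ and the second by $L_{3k}^4L_{3k+3}^4$ I obtain
\[
\frac{F_{3k+1}F_{3k+2}F_{6k+3}}{F_{3k}^4F_{3k+3}^4}=\frac18\left(\frac1{F_{3k}^4}-\frac1{F_{3(k+1)}^4}\right),\qquad
\frac{L_{3k+1}L_{3k+2}F_{6k+3}}{L_{3k}^4L_{3k+3}^4}=\frac1{40}\left(\frac1{L_{3k}^4}-\frac1{L_{3(k+1)}^4}\right).
\]
Taking $f(k)=1/F_{3k}^4$ (respectively $f(k)=1/L_{3k}^4$), which tends to $0$ as $k\to\infty$, identity~\eqref{equ.gy1asjs} with $q=1$ gives $\sum_{k=1}^\infty\bigl[f(k)-f(k+1)\bigr]=f(1)$. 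Hence the first series equals $\tfrac18\,F_3^{-4}=\tfrac18\cdot\tfrac1{16}=\tfrac1{128}$ and the second equals $\tfrac1{40}\,L_3^{-4}=\tfrac1{40}\cdot\tfrac1{256}=\tfrac1{10240}$, which is the assertion. (Equivalently, one may quote identity~\eqref{infall} with $m=1$, $n=3$, $q=1$.)

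The only real obstacle is guessing the right telescoper — namely that the reciprocal fourth power $F_{3k}^{-4}$ (not $F_{3k}^{-2}$) is what collapses, and that the shift for the index $3k$ should be $q=1$; once the difference-of-squares factorisation is written down, the rest is the elementary identities~\eqref{equ.ded0k7c} and~\eqref{equ.wodrq78} together with the values $F_3=2$ and $L_3=4$.
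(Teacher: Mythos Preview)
Your proof is correct. The factorisation $F_{3k+3}^4-F_{3k}^4=(F_{3k+3}^2-F_{3k}^2)(F_{3k+3}^2+F_{3k}^2)$, the evaluations via the recurrence and via~\eqref{equ.ded0k7c}/\eqref{equ.wodrq78}, and the telescoping with $f(k)=1/F_{3k}^4$ (resp.\ $1/L_{3k}^4$) are all valid, and the arithmetic $1/(8\cdot 2^4)=1/128$, $1/(40\cdot 4^4)=1/10240$ checks out.

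The paper in fact states this theorem without proof, so there is no detailed argument to compare against; your approach is precisely in the spirit of the paper's telescoping machinery (Lemma~\ref{finall} and identity~\eqref{equ.gy1asjs}) and is the natural proof one would supply. The one novelty you rightly flag --- that the telescoper is a \emph{fourth} power rather than the squares used in Theorems~\ref{thm.kvn1tje}--\ref{thm.qwgs5xu} --- is exactly the point: the extra factor $F_{3k+1}F_{3k+2}$ (resp.\ $L_{3k+1}L_{3k+2}$) in the numerator is what accounts for the additional $F_{3k+3}^2-F_{3k}^2$ factor in the difference of fourth powers.
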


\end{document}